\documentclass[12pt,reqno,letterpaper]{amsart}
\usepackage{mathrsfs}

\usepackage[T1]{fontenc}
\usepackage{enumitem}
\usepackage{graphicx}
\graphicspath{ {./images/} }
\usepackage{esint}
\usepackage{hyperref}
\hypersetup{
  colorlinks   = true,
  citecolor    = blue,
  linkcolor    = blue
}

\usepackage{amsmath,amsfonts,amsthm,color,tikz, comment, xcolor, xfrac,amssymb}
\usepackage{mathtools}

\usepackage[normalem]{ulem}
\usepackage{mathrsfs}

\usepackage{pdfsync}
\usepackage[font={scriptsize}]{caption}
\usepackage{environ}

\usepackage[left=1in, right=1in, top=1in,bottom=1in]{geometry}
\numberwithin{equation}{section}

\def\XXint#1#2#3{{\setbox0=\hbox{$#1{#2#3}{\int}$ }
\vcenter{\hbox{$#2#3$ }}\kern-.6\wd0}}

\usepackage{fix-cm}

\newcommand{\A}{\mathbb A}
\newcommand{\C}{\mathbb C}

\newcommand{\R}{\mathbb R}

\newcommand{\T}{\mathbb T}
\newcommand{\Z}{\mathbb Z}
\renewcommand{\P}{\mathbb{P}}
\newcommand{\ep}{\varepsilon}

\newcommand{\norm}[1]{ \Vert #1 \Vert}

\newtheorem{theorem}{Theorem}[section]

\newtheorem{conjecture}[theorem]{Conjecture}

\newtheorem{definition}[theorem]{Definition}

\newtheorem{lemma}[theorem]{Lemma}

\newtheorem{proposition}[theorem]{Proposition}

\theoremstyle{remark}
\newtheorem{remark}[theorem]{Remark}

\theoremstyle{remark}

\newcommand{\bean}{\begin{eqnarray*}}
\newcommand{\eean}{\end{eqnarray*}}
\newcommand{\ben}{\begin{enumerate}}
\newcommand{\een}{\end{enumerate}}
\newcommand{\beq}{\begin{equation}}
\newcommand{\eeq}{\end{equation}}

\allowdisplaybreaks

\begin{document}

\author{Nicholas Gismondi, Kunyi(Mark) Ma, Mandon Pathak, Alexandru F. Radu}

\title{Non-unique solutions to the periodic \textsf{gKdV} equation}

\begin{abstract}
    In this paper we utilize a convex integration scheme to construct non-trivial weak solutions to the $k$-generalized \textsf{KdV} equation which lie in
    $$
    \bigcap_{\epsilon > 0} C_t^0 L_x^{k-\epsilon}([0,1] \times \T)
    $$
    and, when $k \ge 3$, it may also be chosen in
    \[
    \bigcap_{\epsilon >0} C_t^0 H_x^{\frac{1}{2} - \frac{1}{k} - \epsilon}([0,1] \times \T)
    \]
    attaining identically $0$ initial data. Since our solutions do not lie in $C_t^0 L_x^k$, this requires introducing a new notion of weak solution, which is in fact stronger than the classical notion of a weak solution when the nonlinearity is integrable. This result shows that a necessary condition for unconditional uniqueness for $k$-\textsf{gKdV} is that the nonlinearity lies in $C_t^0L^1_x$. In the case of \textsf{KdV} this is in fact also sufficient.
\end{abstract}
\maketitle

\section{Introduction}
\subsection{Motivation and Background}
The $k$-generalized Korteweg-de Vries ($k$-\textsf{gKdV}) equation with initial data $u_0$ and $k \geq 2$ is given by
\begin{equation}\label{eq:gkdv}
\begin{cases}
    \partial_t u + \partial_{xxx} u + \sigma\partial_x\left(u^k\right)= 0\\
    u(0,x) = u_0(x)
\end{cases}
\end{equation}
for $\sigma \in \{\pm 1\}$ and $u:[0,1] \times \T \to \R$. When $k$ is understood, we simply refer to $k$-\textsf{gKdV} as \textsf{gKdV}.

This formulation of the $\textsf{gKdV}$ equation includes several important special cases:
\begin{itemize}
    \item The case $k = 2$ with $\sigma = -1$ is known as the Korteweg-de Vries ($\textsf{KdV}$) equation, which is the most well-known member of this family. It was originally derived as a model for the unidirectional propagation of nonlinear dispersive long waves~\cite{KDV}. It is also discussed in context of inverse scattering.
    
    \item For $k = 3$, one obtains the modified \textsf{KdV} ($\textsf{mKdV}$) equation: the \textit{focusing} $\textsf{mKdV}$ equation when $\sigma = 1$, and the \textit{defocusing} $\textsf{mKdV}$ equation when $\sigma = -1$. \textsf{KdV} and \textsf{mKdV} are special in that they are completely integrable systems which possess infinitely many conserved quantities.
    
    \item Another important variant is the \textit{stationary} $\textsf{KdV}$ equation, which arises when time-independent solutions are considered. This reduction leads to an ordinary differential equation that plays a significant role in the study of traveling waves and integrable systems. This is discussed more in \cite{DN} and \cite{FN2016}. The third author recently applied the convex integration methodology to the stationary \textsf{KdV} in \cite{Pathak}; we will expand more on this in Section \ref{sec:6}. 
    
\end{itemize}

The Cauchy problem for \textsf{KdV} and \textsf{gKdV} equations has a large well-posedness theory. The early high-regularity theory goes back to Kato~\cite{Kato1983}. On the real line, Kenig-Ponce-Vega developed a systematic contraction-principle theory for \textsf{KdV} and generalized \textsf{KdV} equations, including local well-posedness and scattering results in Sobolev spaces~\cite{KPV1991,KPV1993}. Bourgain introduced the Fourier restriction norm method for periodic dispersive equations, including periodic \textsf{KdV}~\cite{BourgainKdV}; this method and its refinements became central to the low-regularity theory.

On the torus, the special integrable structure of \textsf{KdV} leads to results far below the energy space. Colliander-Keel-Staffilani-Takaoka-Tao proved sharp global well-posedness results for \textsf{KdV} and \textsf{mKdV} on both $\R$ and $\T$ in the Sobolev ranges where the corresponding local theories were known \cite{CKSTTSharp}. Babin-Ilyin-Titi later gave a direct unconditional uniqueness and Lipschitz-dependence result for periodic \textsf{KdV} in $\dot H^s(\T)$, $s\ge0$, in particular at the $L^2$ level modulo the spatial mean~\cite{BIT}. Related low regularity information for \textsf{mKdV} posed on $\R$ was obtained by Christ-Holmer-Tataru, who proved global-in-time a priori $H^s(\R)$ bounds in the range $-1/8<s<1/4$ \cite{CHT}. More recently, Molinet-Zhu proved that the $L^2(\R)$-unconditional global well-posedness mechanism persists for low-dispersion fractional \textsf{KdV} equations
\[
    \partial_t u-D_x^\alpha\partial_xu+\partial_x(u^2)=0,
\]
with $\alpha\in\left(\frac{55}{38},2\right]$ on the real line~\cite{MZ}.

For \textsf{KdV}, the endpoint picture is now particularly sharp. On the torus, Kappeler-Topalov proved global well-posedness in $H^{-1}(\T)$ by inverse spectral methods~\cite{KT}.  Killip-Vi\textcommabelow{s}an subsequently proved the sharp real-line result in $H^{-1}(\R)$ and their method also recovers the periodic $H^{-1}$ theorem~\cite{KV}. Molinet proved sharp ill-posedness results below this threshold for \textsf{KdV} and \textsf{mKdV} on the torus ~\cite{Molinet2011,Molinet2012}.

For the periodic monomial \textsf{gKdV} equation with $k \ge 2$, the low-regularity theory is more rigid. Colliander-Keel-Staffilani-Takaoka-Tao proved local well-posedness for periodic $k$-\textsf{gKdV} in $H^s(\T)$, $s \ge 1/2$, by working in Bourgain-type spaces ~\cite{CKSTT2004}. Bao-Wu obtained global well-posedness for periodic defocusing generalized \textsf{KdV} in $H^s(\T)$, with $s \ge 1/2$ in the quartic case and $s > 5/9$ in the quintic case~\cite{BaoWu}.

For equations with general analytic nonlinearity
\[
\partial_t u + \partial_{xxx} u +\partial_x f(u) = 0,
\]
Molinet-Tanaka recently proved unconditional local well-posedness in $H^s(\T)$ for every $s> 1/2$, and also derived global consequences at the energy level under their conservation-law hypotheses~\cite{MT}. In the mKdV case, stronger unconditional uniqueness results are known; in particular, Molinet-Pilod-Vento proved unconditional well-posedness for periodic \textsf{mKdV} in $H^s(\T)$, $s \ge 1/3$~\cite{MolinetPilodVento}.

The distinction between conditional and unconditional uniqueness is important for the present work. Many low-regularity well-posedness arguments construct solutions in auxiliary Bourgain-type resolution spaces. Such a theory gives solutions in $C_t^0H_x^s$, but uniqueness may initially be known only inside the smaller resolution class. By contrast, an unconditional theorem asserts uniqueness directly in the natural class $C_t^0H_x^s$, without imposing membership in an auxiliary space. This notion for dispersive PDEs was first introduced by Kato in the context of the nonlinear Schr\"{o}dinger equation \cite{Kato1995}. See also~\cite{BIT,MT,MolinetPilodVento}. 

Our result goes in the opposite direction. We prove an unconditional nonuniqueness statement in a weak singular class: the zero solution and the solution constructed in Theorem~\ref{thm:main} have the same initial data, and no auxiliary resolution space condition is imposed to distinguish them. Nonuniqueness occurs below the Lebesgue threshold at which the nonlinearity is classically meaningful. Indeed, if $u\in C_t^0L_x^k$, then $u^k \in C_t^0L_x^1$ and the term $\partial_x(u^k)$ is an ordinary distribution. Below $C_t^0L_x^k$, however, the product $u^k$ need not be integrable and one must specify what it means to solve the equation. The weak singular formulation below does this through an absolute Fourier summability condition on the nonzero modes of $u^k$.

In the \textsf{KdV} case, $k=2$, the result is sharp in the Lebesgue scale: Theorem ~\ref{thm:main} gives nonuniqueness in every $C_t^0L_x^{2-\epsilon}$, while at the $L^2$ endpoint ~\cite{BIT} shows unconditional uniqueness. For general $k\ge3$, the construction reaches the Sobolev endpoint-minus scale
\[
\bigcap_{\epsilon>0} C_t^0H_x^{\frac12-\frac1k-\epsilon}
\]
which is the Sobolev counterpart of the Lebesgue threshold $L^k_x$.

\subsection{Overview of the Scheme}\label{subsec_scheme}
To achieve this nearly optimal Sobolev regularity (at least in the case of \textsf{KdV}) for the unconditional nonuniqueness, we employ a convex integration scheme with intermittent building blocks which are nearly fully intermittent. Intermittency has played a key role in many convex integration constructions; see for instance  \cite{ABGN}, \cite{BBV}, \cite{BCV}, \cite{BMNV}, \cite{BV19}, \cite{CH}, \cite{CL2}, \cite{CheskidovLuo}, \cite{DS17}, \cite{GKN23}, \cite{Gismondi}, \cite{GR}, \cite{Luo}, \cite{MS}, \cite{NV}, \cite{Pathak}, \cite{Peng} and references therein. The point of this choice is so that the perturbations at level $q+1$, which we denote $w_{q+1}$, have order-one endpoint size, so that $w_{q+1}^k$ can cancel the old error, while their subendpoint Lebesgue and Sobolev norms are summable. More precisely, the intermittency parameters $\epsilon_{q+1}=2^{-(q+1)}$ tend to zero, so the perturbations approach the fully intermittent scaling
\[
\|w_{q+1}\|_{C_t^0L^p_x} \sim \lambda_{q+1}^{\frac1k-\frac1p}, \qquad p<k,
\]
up to a vanishing loss. At the same time, the frequency support of $w_{q+1}$ is constructed to be nearly the natural frequency scale $\lambda_{q+1}^{1+\epsilon_{q+1}}$, which gives
\[
\|w_{q+1}\|_{C_t^0H^\alpha_x} \lesssim \lambda_{q+1}^{(1+\epsilon_{q+1})\alpha}\lambda_{q+1}^{(1-\epsilon_{q+1})(\frac1k-\frac12)}.
\]
Hence the Sobolev increments are summable for every
\[
\alpha <\frac12-\frac1k
\]
and so the construction reaches the endpoint-minus Sobolev scale corresponding to the Lebesgue threshold $L^k_x$, while still keeping enough endpoint strength to cancel the error. In the \textsf{KdV} case $k=2$, this means that the nonuniqueness occurs below $L^2_x$, the endpoint where unconditional uniqueness is known.

The main new contribution introduced in this paper is to propose a very natural interpretation of the nonlinearity as an element of $C_t^0 \dot{H}_x^{-s}$ for some $s > 0$. To briefly summarize, we define $\mathbb{P}_{\not=0}(u^k)$ as an element of $C_t^0 \dot{H}_x^{-s}$ if the formal Fourier expansion is absolutely summable in this norm. Examining the expression this leads to, it is natural then to measure the errors generated by the scheme in what we refer to as \textit{homogeneous weighted Wiener spaces}, which we introduce in subsection \ref{sec:wws}. Morally speaking, we may consider these spaces to be the $\ell^1$ norm of the Fourier coefficients with the weight $|\xi|^{-s}$, which can be regarded as the $\ell^1$ counterpart of Sobolev spaces. Sobolev norms had previously been used as the norm to drive the errors to $0$ in the qualitative convex integration schemes used in \cite{ABGN}, \cite{CH}, \cite{Gismondi}, \cite{GR}, and \cite{Pathak}.

We mention here the result of Christ \cite{Christ1} who was able to show in an extended weak sense that solutions to the nonlinear Schr\"{o}dinger equation and \textsf{KdV} lying in $C_t^0 H_x^s([0,1] \times \T)$, $s < 0$, are not unique. To treat the issue of the nonlinearity not being integrable, Christ adopts a definition based on mollifying the solution $u$ and then requiring that when the mollification length scale is sent to $0$ the nonlinearity converges to a distribution. Our result is an improvement in two regards. First, our method will be able to produce integrable solutions and second, the notion of solution we introduce in Definition \ref{def:paras} and Definition \ref{def:weak_para_soln} is stronger than the Christ notion of solution. That is, the solutions we construct are also solutions in the Christ framework. We prove this in Lemma \ref{lem:absolute-implies-christ-cutoff} in the \hyperref[sec:appendix]{Appendix}. In addition, in \cite{ABGN}, \cite{CH}, \cite{Gismondi}, \cite{GR}, and \cite{Pathak} an alternative notion for defining the nonlinearity involving paraproducts is employed. We also show in Lemma \ref{lem:absolute-implies-paraproduct} in the \hyperref[sec:appendix]{Appendix} that the notion of solution we consider here is stronger than this paraproduct notion.

\subsection{Related open problems}

\;

The present work, together with the analysis of the stationary \textsf{KdV} equation carried out in \cite{Pathak}, may be viewed as part of a broader and \textsf{unified and methodological program} devoted to the \textit{development of convex integration techniques} in the context of \textit{general nonlinear dispersive PDEs}. Traditionally, convex integration has been employed with remarkable success in fluid dynamics, namely \cite{DLS09}, \cite{DLS13} and more recent papers in the context of Onsager program and intermittent convex integration, namely \cite{GKN23}, \cite{isett} and \cite{NV}. See also the papers referenced above on intermittency in the use of convex integration schemes. Its adaptation to dispersive equations, however, remains comparatively unexplored due to the presence of what we refer to as the dispersive term, $\partial_{xx}u$. Following \cite{Pathak} and \cite{Peng}, we may see that "standard intermittent convex integration schemes," like those encountered in \cite{BBV}, \cite{BCV}, \cite{BMNV}, \cite{BV19}, \cite{CL2}, \cite{CheskidovLuo}, \cite{DS17}, \cite{Luo}, \cite{MS}, \cite{NV}, and \cite{Peng} cannot succeed in this dispersive setting. For the purposes of this heuristic discussion, let us assume that $w_{q+1}$ is frequency localized to scale $\lambda_{q+1} \gg 1$. In these standard intermittent convex integration schemes one is required to estimate this term, $\partial_{xx}w_{q+1}$ either in an $L^p$ norm or a $C^\alpha$ norm \cite{BV2020}. 
Let us assume we work in the $L^1$ setting, with parameters given by
\[
\lambda_q = a^{b^{q}}, \qquad a,b>1, \qquad \delta_q = \lambda_q^{-k\beta}, \quad \beta>0.
\]
Suppose moreover that the inductive hypotheses furnish the bounds
\[
\|E_q\|_{C_t^0L^1_x} \lesssim \delta_{q+1}, \qquad \|w_q\|_{C^0_tL^1_x} \lesssim \delta_{q}^{\frac{1}{k}}\lambda_q^{\frac{1}{k}-1}
\]
Where $E_q$ is the "error" that one wishes to drive to zero in the convex integration scheme. From the intermittent scaling and the frequency localization of $w_{q+1}$ one has
$$
\Vert w_{q+1} \Vert_{C_t^0 H_x^\alpha} \lesssim \lambda_{q+1}^\alpha \Vert w_{q+1} \Vert_{C_t^0 L^2_x} \lesssim \lambda_{q+1}^{\alpha - \beta + \frac{1}{k} - \frac{1}{2}}
$$
and thus we would have the solution $u$ lies in $C_t^0 H^\alpha_x$ for $\alpha < \beta + \frac{1}{2} - \frac{1}{k}$. This surpasses the regularity achieved in our scheme (see Theorem \ref{thm:main}) so long as $\beta > 0$, but as we will demonstrate, this is impossible. To see this, we estimate the "dissipative error term" as
\begin{equation*}
\|\partial_{xx} w_{q+1}\|_{C_t^0L^1_x}
\lesssim \lambda_{q+1}^2 \|w_{q+1}\|_{C_t^0L^1_x} \lesssim \lambda_{q+1}^2 \, \delta_{q+1}^{\frac{1}{k}} \, \lambda_{q+1}^{\frac{1}{k}-1}.
\end{equation*}

For the inductive scheme to close, it is necessary that
\[
\lambda_{q+1}^2 \, \delta_{q+1}^{\frac{1}{k}} \, \lambda_{q+1}^{\frac{1}{k}-1}
\lesssim \delta_{q+2}.
\]
However, a comparison of the exponents shows that this requirement imposes the condition
\[
\beta \leq \frac{k+1}{k(1-bk)},
\]
which is strictly negative since $bk > k \geq 2$. This contradicts the assumption $\beta>0$, and hence the convex integration scheme cannot be closed under the given parameter regime.

Our scheme is successful so long as the nonlinearity $u^k \not \in C_t^0L^1_x$. If one wishes to push this scheme beyond this threshold, this would require one to use a "standard scheme" which as we've illustrated above cannot succeed. This leads one to conjecture that this natural threshold identified by the failure of the convex integration scheme is in fact the threshold for unconditional nonuniqueness vs. unconditional uniqueness. We formulate this conjecture below. More precisely, our considerations and the fact the result is known to be sharp for \textsf{KdV} indicate the possibility that the correct unconditional local well-posedness threshold for $k$-\textsf{gKdV} may be $s \geq \frac{1}{2} - \frac{1}{k}$.

\begin{conjecture}
    The $k$-generalized Korteweg-de Vries equation \eqref{eq:gkdv}, $k \geq 3$, is unconditionally locally well-posed in $H^s(\T)$ for all $s \geq \frac{1}{2} - \frac{1}{k}$.
\end{conjecture}

\subsection{Main Result}

As mentioned in the introduction, our notion of a weak solution must be able to handle the case when the nonlinearity $u^k \not \in C_t^0 L^1_x$. In Definition \ref{def:paras} we introduce the notion of how to interpret the nonlinearity as an element of $C_t^0 H_x^s$, and then in Definition \ref{def:weak_para_soln} we formulate what we refer to as a singular solution.

\begin{definition}[\textbf{Absolute Summability in $C_t^0 \dot H^s([0,1] \times \T)$}]\label{def:paras}
    Let $u_1,u_2,\ldots,u_k \in C_t^0 \mathcal{D}'([0,1] \times \T)$.  We say that $\mathbb{P}_{\not = 0}(u_1u_2 \cdots u_k)$ is well-defined as a distribution in $C_t^0 \dot{H}^s([0,1] \times \T)$ for some $s \in \R$ if
    $$
    \sum_{\xi_1 + \xi_2 + \ldots + \xi_k \not =0 } \Vert \hat{u}_1(t,\xi_1) \hat{u}_2(t,\xi_2) \cdots \hat{u}_k(t,\xi_k)\Vert_{C_t^0} |\xi_1 + \xi_2 + \ldots + \xi_k|^{s} < \infty
    $$
    Then we define
    $$
        \mathbb{P}_{\not = 0}(u_1u_2\cdots u_k) := \sum_{\xi_1 + \xi_2 + \ldots + \xi_k \not=0} \hat{u}_1(t,\xi_1) \hat{u}_2(t,\xi_2) \cdots \hat{u}_k(t,\xi_k) e^{2\pi i (\xi_1 + \xi_2 + \ldots + \xi_k)x}
    $$
    since the right-hand side is an absolutely summable series in $C_t^0 \dot H^s([0,1] \times \T)$\footnote{Definition \ref{def:paras} is based on \cite[Definition 1]{LR} and discussion with Matthew Novack.}.
\end{definition}

\begin{definition}[\textbf{Weak singular solutions to ~\eqref{eq:gkdv}}] \label{def:weak_para_soln}
    We say $u \in C_t^0 H^s_x([0,1] \times \T)$ is a weak singular solution \footnote{The name \textit{singular solution} is based on the name introduced in \cite{CH}.} to the \textsf{gKdV} equation if there is $s' \in \R$ such that $\mathbb{P}_{\not = 0}(u^k)$ is well defined as an element of $C_t^0\dot{H}_x^{s'}$ in the sense of Definition \ref{def:paras} and
    $$
    \int_0^1 \left(\langle u,\partial_t \phi \rangle_{H^s,H^{-s}} + \langle u, \partial_{xxx}\phi \rangle_{\dot{H}^s,\dot{H}^{-s}} + \sigma \left\langle \mathbb{P}_{\not=0}\left(u^k\right), \partial_x \phi \right\rangle_{\dot{H}^{s'},\dot{H}^{-s'}}\right)\, dt =  -\langle u_0, \phi(0) \rangle_{H^s,H^{-s}}.
    $$
    for all $\phi \in C^\infty_{c,t} H^{s}_x([0,1) \times \T)$, where $C_c^\infty$ denotes the space of smooth compactly supported functions.

\end{definition}

With this, we may state our main result.

\begin{theorem}[\textbf{Non-Uniqueness}]\label{thm:main}
     For $k=2$ and $s > 3$, there is
     $$
     u \in \bigcap_{\epsilon>0} C_t^0L^{k-\epsilon}_x([0,1] \times \T) \setminus \{0\}
     $$
     such that $\mathbb{P}_{\not=0}(u^2)$ exists as an element of $C_t^0\dot{H}_x^{-s}([0,1] \times \T)$ in the sense of Definition \ref{def:paras}, and which solves ~\eqref{eq:gkdv} in the sense of Definition \ref{def:weak_para_soln} and satisfies $u(0,x) = 0$.
     
     Similarly, for $k \geq 3$ and $s > 3$, there is
     $$
     u \in \bigcap_{\epsilon>0} C_t^0H^{\frac{1}{2}- \frac{1}{k}-\epsilon}_x([0,1] \times \T) \setminus \{0\}
     $$
     such that $\mathbb{P}_{\not=0}(u^k)$ exists as an element of $C_t^0\dot{H}_x^{-s}([0,1] \times \T)$ in the sense of Definition \ref{def:paras}, and which solves ~\eqref{eq:gkdv} in the sense of Definition \ref{def:weak_para_soln} and satisfies $u(0,x) = 0$.
\end{theorem}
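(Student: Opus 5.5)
We construct $u$ by a qualitative convex integration scheme. We build approximate solutions $(u_q, E_q)$ with
\[
\partial_t u_q + \partial_{xxx} u_q + \sigma \partial_x \mathbb{P}_{\neq 0}(u_q^k) = \partial_x E_q ,
\]
where $E_q$ is measured in a homogeneous weighted Wiener norm. This is the $\ell^1$ norm of $|\xi|^{-s}\,\|\hat E_q(\cdot,\xi)\|_{C_t^0}$ over $\xi \neq 0$, with $s>3$ fixed. The scheme starts from $u_0 \equiv 0$ and a nonzero error, with a smooth temporal cutoff that vanishes near $t=0$, so every $u_q$ has zero initial data. The inductive hypotheses are:
\begin{itemize}
\item[(i)] $u_q$ is smooth, supported in $t \ge \tau_q > 0$ with $\tau_q$ bounded below, and frequency supported up to $\lambda_q^{1+\epsilon_q}$;
\item[(ii)] $\|E_q\|_{\mathrm{Wiener},-s} \le \delta_{q+1}$ with $\delta_q \to 0$;
\item[(iii)] the increment $w_q = u_q - u_{q-1}$ satisfies $\|w_q\|_{C_t^0 L^p_x} \lesssim \lambda_q^{1/k - 1/p + C\epsilon_q}$ for $p<k$, and the Sobolev bound from Subsection~\ref{subsec_scheme}.
\end{itemize}
By (iii), for any fixed $p<k$ and $\alpha<\tfrac12-\tfrac1k$ the increments are summable in $C_t^0L^p_x$ and $C_t^0H^\alpha_x$, since $\epsilon_q=2^{-q}\to 0$. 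The limit $u$ then lies in the stated intersections. Nontriviality is enforced by making the first perturbation of order one in $L^1$, with later increments summing to less.

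The perturbation is $w_{q+1} = \sum_j a_j(t,x)\, W_j(\lambda_{q+1} x)$. Here the $W_j$ are periodized, nearly fully intermittent bumps: profiles $\phi$ concentrated at scale $\lambda_{q+1}^{-1}$ and normalized so that $\|W\|_{L^k}\sim 1$. The amplitudes $a_j$ are built from $E_q$; for $k$ odd we take $a = (\sigma E_q)^{1/k}$ after adding a positive constant, and for $k$ even we use a sign decomposition of $E_q$. They are chosen so that $w_{q+1}^k$ has mean part (in the fast variable) equal to $-\sigma E_q$ plus a correction absorbed at low frequency. Then the new error
\[
E_{q+1} = \sigma\,\mathbb{P}_{\neq0}\bigl(w_{q+1}^k - \langle w_{q+1}^k\rangle\bigr) + \partial_x^{-1}(\partial_t + \partial_{xxx}) w_{q+1} + \sigma\, \partial_x^{-1}\partial_x(\text{cross terms } u_q^{j} w_{q+1}^{k-j})
\]
is analyzed term by term in the weighted Wiener norm. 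Because the weight $|\xi|^{-s}$ with $s>3$ is applied, the linear dispersive term satisfies
\[
\|\partial_{xx} w_{q+1}\|_{\mathrm{Wiener},-s} \lesssim \sum_{|\xi|\lesssim\lambda_{q+1}^{1+\epsilon}} |\xi|^{2-s}|\hat w_{q+1}(\xi)|,
\]
with $|\hat w_{q+1}(\xi)|\lesssim \|w_{q+1}\|_{L^1}\lesssim\lambda_{q+1}^{1/k-1}$. This is small, because $s>3$ makes $\sum|\xi|^{2-s}$ converge and $\hat w_{q+1}$ vanishes for $|\xi|\ll\lambda_{q+1}$ (we use fast oscillation or a mean-zero profile). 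This is exactly where the $\ell^1$-Fourier framework avoids the obstruction described in the introduction. The oscillation error $w_{q+1}^k-\langle w_{q+1}^k\rangle$ lives at frequencies $\gtrsim \lambda_{q+1}$, and its Fourier coefficients are bounded by $\|w_{q+1}^k\|_{L^1}\lesssim 1$. Weighting by $|\xi|^{-s-1}$ (one derivative inverted) gives $\lambda_{q+1}^{-s}$ times a logarithmic factor in the support size, which is small. The cross terms involve $w_{q+1}^j$ with $j<k$, whose $L^1$ norms decay like $\lambda_{q+1}^{j/k-1}$, and they are handled the same way.

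Passing to the limit requires showing that $\mathbb{P}_{\neq0}(u^k)$ is absolutely summable in $C_t^0\dot H^{-s}$ and is the limit of $\mathbb{P}_{\neq0}(u_q^k)$. We would expand $u^k$ multilinearly in the increments $w_q$ and bound each Fourier coefficient of a product by products of $\ell^\infty$ bounds on $\hat w_{q_i}$ together with the sizes of the frequency supports. This uses $\|\hat w_q\|_{\ell^\infty}\le\|w_q\|_{L^1}$ and the support size $\lambda_q^{1+\epsilon_q}$, and with $s>3$ large the resulting series converges. Once this is in place, the weak formulation of Definition~\ref{def:weak_para_soln} follows by pairing with $\phi$ and letting $E_q\to0$; $u(0)=0$ holds by the temporal support. \textbf{The main obstacle} is this multilinear summability for mixed products $w_{q_1}\cdots w_{q_k}$ with very different frequencies. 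Crude $\ell^\infty\times$support bounds lose powers of $\lambda_{q}^{1+\epsilon}$ that must be beaten by $L^1$ decay. I would first try choosing $\lambda_{q+1}$ super-exponentially large relative to $\lambda_q$ and $s>3$, and if that fails, impose a Wiener-norm bound $\sum_\xi|\hat w_q(\xi)|$ in the induction.
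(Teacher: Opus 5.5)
Your error estimates follow the paper: the norm $\dot{\mathbb A}^{-s}_{x,t}$ with $s>3$, so that $\partial_{xx}w_{q+1}$ is paid for by $\|w_{q+1}\|_{C_t^0L^1_x}$; the Nash terms via $L^p$ decay for $p<k$; and $\epsilon_q=2^{-q}$. The genuine gap is the step you postpone, the absolute summability of $\mathbb P_{\neq0}(u^k)$, and you place its obstacle in the wrong spot.

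Write $AF(f,k)=\sum_{\xi_1+\cdots+\xi_k\neq0}\|\hat f(\cdot,\xi_1)\cdots\hat f(\cdot,\xi_k)\|_{C_t^0}|\xi_1+\cdots+\xi_k|^{-s}$. The increments live in disjoint frequency shells, so $\sum_q AF(w_q,k)\le AF(u,k)$. Hence the \emph{diagonal} terms $AF(w_q,k)$ must themselves be summable in $q$.

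Your ingredients are $\|\hat w_q\|_{\ell^\infty}\le\|w_q\|_{L^1}\lesssim\lambda_q^{(1-\epsilon_q)(\frac1k-1)}$ and at most $(2\lambda_q^{1+\epsilon_q})^{k-1}$ tuples per output frequency. They give only $AF(w_q,k)\lesssim\lambda_q^{2(k-1)\epsilon_q}$. This is unbounded, because the minimal fast frequency $\lambda_q^{\epsilon_q}$ must tend to infinity for the oscillation error to be small. A careful count using $\operatorname{supp}\hat\rho\subset\lambda^{\epsilon}\mathbb Z$ improves this only to $O(1)$ per step. The obstruction is the resonant tuples, whose fast frequencies cancel: their output sits at the low frequencies of the amplitude, where $|\eta|^{-s}$ gains nothing. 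Taking absolute values also destroys the cancellation that makes the signed resonant sum equal to the nonzero modes of $-\sigma E_q$.

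Neither remedy you suggest helps. Larger $s$ or faster growth of $\lambda_q$ does not touch these resonant terms. A summable Wiener bound is impossible, since $\sum_\xi|\hat w_q(\xi)|\ge\|w_q\|_{C^0_tL^\infty_x}$, and this grows like $\lambda_q^{(1-\epsilon_q)/k}$ times the amplitude for intermittent bumps.

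The mixed products you worried about are the easy part. Group all lower increments into $u_q$, whose Fourier data is fixed before $\lambda_{q+1}$ is chosen. Then $u_q^{k-m}w_{q+1}^m$ with $1\le m\le k-1$ gains $\lambda_{q+1}^{(1-\epsilon_{q+1})(\frac mk-1)}$. This uses Peetre's inequality and a split according to whether the $w$-frequencies sum to zero (Lemmas~\ref{lem:horrible1} and~\ref{lem:horrible2}).

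The missing idea is a cancellation-free bound on the resonant part of $w_{q+1}^k$ in terms of $\|E_q\|_{\dot{\mathbb A}^{-s}_{x,t}}$. The paper takes $a_q=(A_{q+1}-\sigma E_q)^{1/k}$ with $A_{q+1}$ large, Taylor-expands in $E_q/A_{q+1}$, and sorts the terms.
\begin{itemize}
\item With no factor of $E_q$, a nonzero output lies in $\lambda_{q+1}^{\epsilon_{q+1}}\mathbb Z$, so these terms cost $\lambda_{q+1}^{-s\epsilon_{q+1}}A_{q+1}$.
\item With exactly one factor of $E_q$ and cancelling fast frequencies, the powers of $A_{q+1}$ and $\lambda_{q+1}$ cancel. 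One gets $K_{q+1}\|E_q\|_{\dot{\mathbb A}^{-s}_{x,t}}$, and $K_{q+1}\le\int_\T\rho^k=1$ because $\hat\phi_k\ge0$.
\item With two or more factors of $E_q$, one gets $O(\|E_q\|^2_{\mathbb A^0_{x,t}}/A_{q+1})$.
\end{itemize}
Fixing $A_{q+1}$ first and then $\lambda_{q+1}$ yields $AF(w_{q+1},k)\le2^{-q-103}$. This is propagated inductively as item~\ref{i:5} of Proposition~\ref{prop:ind}.

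The same additive constant also handles even $k$. There $w^k\ge0$, so no sign decomposition can make the fast mean of $\sigma w^k$ equal $-E_q$ when $E_q$ changes sign. The spatially constant leftover $\sigma A_{q+1}h_{q+1}^k$ is harmless because $\partial_x$ kills it.

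Your base case also needs fixing. With $u_0\equiv0$ the relaxed equation forces $\partial_xE_0=0$, so there is no error to cancel. Moreover, intermittent increments have $L^1$ norm $\lesssim\lambda^{(1-\epsilon)(\frac1k-1)}\ll1$, so an order-one first perturbation cannot supply nontriviality. As in the paper, start from a nonzero smooth base state and define $E_0$ by the equation.
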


\begin{remark}
    In Theorem \ref{thm:main} in the case when $k=2$, one is able to in addition arrange that
    $$
    u \in \bigcap_{\epsilon > 0} C_t^0 B^{-s}_{\infty,\infty,x}([0,1] \times \T)
    $$
    so that the solution is both "nearly square integrable" and "nearly continuous." In section \ref{subsec:intermittency} we will discuss how this refinement can be achieved.
\end{remark}

\begin{remark}
    We remark here that $k$-\textsf{gKdV} always has the following conserved quantities for sufficiently regular solutions:
    \begin{align*}
        \text{Mass:}& \qquad \int_{\T} u(t,x)\, dx\\
        \text{Momentum:}& \qquad \int_{\T} |u(t,x)|^2\, dx\\
        \text{Energy:}& \qquad \int_{\T} \left(|\partial_x u(t,x)|^2 + \frac{1}{k+1} \left(u(t,x)\right)^{k+1}\right)\, dx.
    \end{align*}
    \textsf{KdV} and \textsf{gKdV}, being completely integrable, come with infinitely many more conserved quantities, but we do not pursue this further. Notice in order for the energy to be a well-defined quantity, this requires that $u \in C_t^0H^1_x$, but the solutions constructed in Theorem \ref{thm:main} do not achieve this regularity and thus the energy of the solutions we construct is undefined. When $k \geq 3$, the solutions we construct will lie in $C_t^0 L^2_x$, and thus the momentum is well-defined. However, this will not be a conserved quantity, as otherwise it would force all solutions which have identically zero initial data to remain identically zero for all time. Finally, for all $k \geq 2$, the solutions we construct will lie in $C_t^0 L^1_x$, and hence the mass is a well-defined quantity, and indeed it is also a conserved quantity. We prove this in Lemma \ref{lem:mean-conservation} in the \hyperref[sec:appendix]{appendix}. This explains why we insist on the solutions we construct having zero spatial mean for all time, it is in fact forced by the way we define a weak singular solution with identically zero intial data.
\end{remark}

The rest of this paper is concerned with the proof of Theorem \ref{thm:main}.

\subsection{Outline}

The paper is organized as follows.  In Section \ref{sec:2} we set up the analytic
framework used in the iteration.  This includes the Littlewood-Paley notation,
the weighted Wiener spaces in which the error is measured, and the intermittent
profiles which are used to generate large $k$-th powers while remaining small
in subcritical norms. We also record the elementary multiplier and convolution
estimates needed later in the proof.

Section \ref{sec:3} introduces the relaxed equation and describes one step of the convex integration scheme. Starting from a relaxed solution $(u_q,E_q)$, we add an intermittent perturbation $w_{q+1}$ and decompose the new error into its oscillation, Nash, dispersion, and temporal parts.

Section \ref{sec:4} states the inductive proposition. This is the quantitative core of
the paper: it records the support, convergence, error, and absolute-summability
bounds propagated by the iteration.

Section \ref{sec:5} proves the inductive proposition. We construct the increment $w_{q+1}$ and estimate the four errors produced by the iteration. We then verify the absolute Fourier summability of the nonlinear products and pass to the limit to prove Theorem~\ref{thm:main}.

Section \ref{sec:6} contains further consequences and variants.  We first discuss the stationary equation, where the endpoint $L^k$ class is rigid but the
subendpoint classes remain flexible. We then explain why the same construction
also applies to the generalized Benjamin-Ono and to other translation-invariant
dispersive variants for which the linear term contributes only a controllable
dispersion error.

The \hyperref[sec:appendix]{appendix} collects compatibility properties of the weak singular formulation.
First, we prove that weak singular solutions conserve their spatial mean. We
then compare the absolute Fourier summability condition with two more standard
ways of interpreting rough nonlinearities: Littlewood-Paley paraproduct
summation and Christ's cutoff formulation. Finally, we show that whenever the
regularity is high enough to make the ordinary product $u^k$ belong to $L^1$, the weak singular formulation implies the classical distributional weak formulation.

\subsection{Acknowledgments}
 M.P. was partially supported by NSF 2400238. A.R. was partially supported by a grant of the Ministry of Research, Innovation and Digitization, CCCDI - UEFISCDI, project number ROSUA-2024-0001, within PNCDI IV.

\section{Background and Tools}\label{sec:2}

The following on Littlewood-Paley theory, Fourier multiplier operators, and Sobolev spaces can be found in \cite{BCD} and \cite{Grafakos}.

\begin{definition}[\textbf{Littlewood-Paley projectors}]\label{def:projs}
    There exists $\varphi \colon \R\to[0,1]$, smooth, radially symmetric, and compactly supported in $\{\xi : 6/7 \leq |\xi|\leq 2\}$ such that $\varphi(\xi) = 1$ on $\{\xi : 1 \leq |\xi| \leq 12/7\}$,
    \begin{equation}
        \sum_{j\geq 0}\varphi(2^{-j}\xi)=1 \hspace{0.25cm} \text{ for all } \hspace{0.25cm} |\xi|\geq 1, \notag
    \end{equation}
    and $\operatorname{supp}\varphi_{j}\cap \operatorname{supp} \varphi_{j'}=\emptyset$ for all $|j-j'|\geq 2$, where $\varphi_j(\cdot) = \varphi(2^{-j}\cdot)$. We define the projection of a function $f:[0,1] \times \T \to \C$ on its $0$-mode by
    \begin{equation}
        \mathbb{P}_{=0,x}(f)(t)=\int_{\T} f(t,x)\, dx, \notag
    \end{equation}
    and the projection on the $j^{\rm th}$ shell by
    \begin{equation}
        \mathbb{P}_{2^j,x}(f)(t,x)=\sum_{\xi\in \Z}\hat{f}(t,\xi)\varphi_{j}(\xi)e^{2\pi i\xi x} \, . \notag
    \end{equation}
    We also define the frequency cutoff by
    $$
    \mathbb{P}_{\leq 2^j,x}(f) = \mathbb{P}_{=0,x}(f) + \sum_{k=1}^j \mathbb{P}_{2^k,x}(f),
    $$
    the projection onto high frequencies by $\mathbb{P}_{>2^j,x}(f) = (\operatorname{Id} - \mathbb{P}_{\leq 2^j,x})(f)$ and the projection off the mean by $\mathbb{P}_{\neq 0,x}f:=(\operatorname{Id}-\mathbb{P}_{=0,x})f$. 
    Denote by $m_{\le 2^j}$ the Fourier multiplier of $\mathbb P_{\le 2^j,x}$. Since $\varphi\ge0$, the above definition gives 
    \[
    0\le m_{\le 2^j}(\xi)\le 1
    \]
    for every $\xi\in\mathbb Z$.
\end{definition}

\begin{remark}
    In Definition \ref{def:projs} we have included the variable $x$ in the subscripts of the various operators to emphasize that the operators are solely acting on the spatial variables. Going forward when it is clear, we will omit the $x$ in the subscript and merely write $\mathbb{P}_{=0}$ instead of $\mathbb{P}_{=0,x}$, $\mathbb{P}_{\not=0}$ instead of $\mathbb{P}_{\not=0,x}$, etc.
\end{remark}

\begin{lemma}[\textbf{$L^p$ boundedness of projection operators}]\label{lem:proj}
    $\mathbb{P}_{\leq \lambda}$ is a bounded operator from $L^p$ to $L^p$ for $1 \leq p \leq \infty$ with operator norm independent of $\lambda$.\footnote{This is a quick consequence of the Poisson summation formula.}
\end{lemma}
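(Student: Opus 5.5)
The plan is to write $\mathbb{P}_{\le \lambda}$ as convolution with a kernel on $\T$, namely
\[
\mathbb{P}_{\le 2^j} f = K_j * f, \qquad K_j(x) = \sum_{\xi\in\Z} m_{\le 2^j}(\xi) e^{2\pi i \xi x},
\]
and to prove $\|K_j\|_{L^1(\T)} \le C$ with $C$ independent of $j$. Once this is in hand, Young's inequality on $\T$ gives $\|K_j * f\|_{L^p} \le \|K_j\|_{L^1}\|f\|_{L^p}$ for every $1\le p\le\infty$, which is the claim.

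The first step is to identify the multiplier with samples of a fixed smooth profile. From Definition \ref{def:projs}, for $j\ge1$ we have $m_{\le 2^j}(\xi) = \psi(2^{-j}\xi)$ on $\Z$, where $\psi(\eta) := 1-\sum_{l > 0}\varphi(2^{-l}\cdot 2^{j}\eta)$ is rewritten in the scale-free form $\psi(\eta)=1-\sum_{l\ge 1}\varphi(2^{l}\eta)$. Here $\psi$ is smooth, compactly supported in $|\eta|\le 2$, and equals $1$ near the origin; on the integers one checks this is consistent with the $\mathbb{P}_{=0}$ term plus the shells $1,\dots,j$. If the exact identification is awkward at low frequencies, I would instead note that $m_{\le 2^j}$ differs from $\psi(2^{-j}\cdot)$ by finitely many low modes with a uniformly bounded count, each contributing a kernel of $L^1$ norm at most $1$.

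The second step is Poisson summation:
\[
K_j(x) = \sum_{\xi}\psi(2^{-j}\xi)e^{2\pi i\xi x} = \sum_{n\in\Z} 2^j\check\psi\bigl(2^j(x+n)\bigr),
\]
which is the periodization of the dilate $2^j\check\psi(2^j\cdot)$. Since $\psi\in C_c^\infty$, we have $\check\psi$ in the Schwartz class, so the series converges absolutely. Then
\[
\|K_j\|_{L^1(\T)} \le \int_\R |2^j\check\psi(2^j y)|\,dy = \|\check\psi\|_{L^1(\R)},
\]
which is independent of $j$. The cases $j\le 0$ (only finitely many modes) are trivial.

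The only mild obstacle is the bookkeeping in the first step: checking that the discrete multiplier built from the shells in Definition \ref{def:projs} is exactly the restriction of a single rescaled smooth bump, uniformly in $j$. Everything else is standard.
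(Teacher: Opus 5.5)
Your proposal is correct and follows the route the paper indicates in its footnote. Poisson summation exhibits the kernel as the periodization of $2^j\check\psi(2^j\cdot)$, its $L^1(\T)$ norm is at most $\|\check\psi\|_{L^1(\R)}$ uniformly in $j$, and Young's inequality finishes.

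Two small corrections to your first step:
\begin{enumerate}
\item The scale-free profile should be $\psi(\eta)=1-\sum_{l\ge1}\varphi(2^{-l}\eta)$. With $\varphi(2^{l}\eta)$, as you wrote it, the function is not compactly supported.
\item The paper's shells start at $k=1$, so $m_{\le 2^j}(\pm1)=0\neq\psi(\pm2^{-j})$. Your fallback of correcting finitely many low modes is therefore actually needed, not optional. Here the correction is a kernel $2\cos(2\pi x)$, whose $L^1$ norm is bounded.
\end{enumerate}
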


\begin{definition}[\textbf{$\dot H^s$ Sobolev spaces}] For $s \in \mathbb{R}$, we define
    $$\dot{H}^{s}(\mathbb{T})=\left\{ f: \sum_{\xi \in \Z \setminus \{0\}}|\xi|^{2s}|\hat{f}(\xi)|^2<\infty\right\}  $$
    with the norm induced by the sum above.
\end{definition}

\begin{definition}[\textbf{Fourier Coefficients}]
    For $f \in C_t^0 \mathcal{D}'_x$ we define the Fourier coefficients by
    $$
    \hat{f}(t,\xi) = \langle f(t,x), e^{-2\pi i \xi x}\rangle.
    $$
    For $f\in C_t^0L^1_x$ this clearly is equivalent to
    \[\hat{f}(t,\xi)=\int_{\T}f(t,x)e^{-2\pi i\xi x}\,dx, \hspace{0.25cm} \text{ where } \hspace{0.25cm} \T=[0,1].\]
\end{definition}

% \begin{definition}{\textbf{(Homogeneous Besov spaces)}}
%     For $\alpha \in \R$, $0<p,q\leq \infty$, and $f \in \mathcal{D}'(\T)$, define the homogeneous Besov space to be
%     \[
%         \dot{B}^{\alpha}_{p,q} = \left\{ f: \|f\|_{\dot{B}^{\alpha}_{p,q}} = \left\| 2^{j\alpha}\|\mathbb{P}_{2^j}f\|_{L^p(\T)}\right\|_{\ell^q_{j\geq 0}} <\infty \right\}.
%     \]
% \end{definition}
% % \vspace{-2.5em}
% \begin{remark}
%     Note for $\alpha > 0$ the space $\dot{B}^{\alpha}_{\infty,\infty}$ is equivalent to $\{f \in C^\alpha : \hat{f}(0) = 0\}$. In addition, the space $\dot{B}^{\alpha}_{2,2}$ is equivalent to the homogeneous Sobolev space $\dot{H}^\alpha$ for all $\alpha \in \R$.
% \end{remark}

\subsection{Weighted Wiener Spaces}\label{sec:wws}

We next introduce the weighted Wiener spaces, which are Fourier-side \(\ell^1\) spaces with polynomial
weights.  The main advantage of working with Wiener-type norms is that they
allow us to estimate oscillatory errors directly by summing their Fourier
coefficients, and in particular, negative weights are well suited for measuring
high-frequency errors.

\begin{definition}[\textbf{(Homogeneous Weighted Wiener Space)}] For $s\in \R$, $f \in \mathcal{D}'(\T)$ we define the \textit{homogeneous Wiener space} to be
$$
\dot{\mathbb{A}}^s(\T) = \left\{f : \sum_{\xi \not = 0} |\xi|^{s}|\hat{f}(\xi)| < \infty\right\}.
$$
Define 
\[\norm{f}_{\dot{\A}^{s}} = \sum_{\xi \neq 0} |\xi|^s |\hat{f}(\xi)|.\]
It is clear $\dot{\A}^s(\T) \cong
\ell^1\bigl(\{\xi\neq 0\}, |\xi|^s\bigr)$ modulo constants, is a Banach Space.
\end{definition}

To encode the time dependence, we propose the alternative space as follows.

\begin{definition}[\textbf{Spacetime Homogeneous Weighted Wiener Space}]
    For $s \in \R$, define the spacetime homogeneous weighted Wiener space as 
    $$
\dot{\mathbb{A}}_{x,t}^{s}([0,1] \times \T) = \left\{u \in \mathcal{D}^\prime([0, 1] \times \T) :  \sum_{\xi \not =0} |\xi|^s \Vert \hat{u}(t,\xi) \Vert_{C_t^0} < \infty\right\}
$$ and define 
\[\norm{u}_{\dot{\mathbb{A}}_{x,t}^{s}} = \sum_{\xi \neq 0} |\xi|^s \Vert \hat{u}(t,\xi) \Vert_{C_t^0}.\]
It is clear $\dot{\mathbb{A}}_{x,t}^{s}([0,1] \times \T)  \cong \ell^1(\{\xi\neq 0\}, |\xi|^s; C^0([0, 1]))$ modulo $C^0([0, 1])$-valued spatial constants is a Banach Space.
\end{definition}

\begin{remark}
    Note the spaces $\dot{\mathbb{A}}_{x,t}^{s}([0,1] \times \T)$ and $C_t^0\dot{\A}_x^s = C^0([0, 1] ; \dot{\A}^{s}(\T))$ equipped with norm 
    \[\norm{u}_{C_t^0 \dot{\A}_x^s} = \underset{t \in [0, 1]}{\sup} \sum_{\xi \neq 0} |\xi|^s |\hat{u}(t,\xi)|  \] are different in nature. It is obvious that one has the continuous embedding $\dot{\mathbb{A}}_{x,t}^{s} \hookrightarrow C_t^0\dot{\A}_x^s$. Note that the inclusion is strict.
\end{remark}

\begin{definition}[\textbf{Inhomogeneous Weighted Wiener Spaces}] For $s\in \R$, $f \in \mathcal{D}'(\T)$ we define the \textit{inhomogeneous Wiener space} to be
$$
\mathbb{A}^s(\T) = \left\{f \in \mathcal{D}'(\T) : \sum_{\xi \in \Z} \langle \xi\rangle^{s}|\hat{f}(\xi)| < \infty\right\}.
$$
where 
\[\langle \xi\rangle = (1 + |\xi|^2)^{\frac{1}{2}}\]
Define 
\[\norm{f}_{\A^{s}} =\sum_{\xi \in \Z} \langle \xi\rangle^{s}|\hat{f}(\xi)|\]
Similarly, define the spacetime version as 
   $$
\mathbb{A}_{x,t}^{s}([0,1] \times \T) = \left\{u \in C_t^0\mathcal{D}^\prime_x([0, 1] \times \T) :  \sum_{\xi \in \Z} \langle \xi\rangle^s \Vert \hat{u}(t,\xi) \Vert_{C_t^0} < \infty\right\}
$$ and define 
\[\norm{u}_{\mathbb{A}_{x,t}^{s}} = \sum_{\xi \in \Z} \langle\xi\rangle^s \Vert \hat{u}(t,\xi) \Vert_{C_t^0}\]
\end{definition}
As above, \(\mathbb A^s(\mathbb T)\) is isometrically isomorphic to the
weighted sequence space
\(\ell^1(\mathbb Z,\langle\xi\rangle^s)\), and
\(\mathbb A_{x,t}^s([0,1]\times\mathbb T)\) is isometrically isomorphic to
\(\ell^1(\mathbb Z,\langle\xi\rangle^s;C^0([0,1]))\). Hence both spaces are
Banach spaces.

\begin{lemma}[\textbf{Kato-Ponce-type product estimate for weighted Wiener Spaces}]\label{lem:pseudo_diff_cont_Wiener}
    Let $\alpha,\beta \in C^{\infty}(\T)$ with $\beta$ having zero mean. Then for $s \geq 0$ we have
    $$
        \Vert \alpha \beta \Vert_{\dot{\A}^{-s}} \lesssim \Vert \alpha \Vert_{\A^s} \Vert \beta \Vert_{\dot{\A}^{-s}}.
    $$
    Moreover, for 
\[
    \alpha\in \mathbb A_{x,t}^{s}([0,1]\times\mathbb T),
    \qquad
    \beta\in \dot{\mathbb A}_{x,t}^{-s}([0,1]\times\mathbb T),
\]
and assume that \(\beta(t,\cdot)\) has zero spatial mean for every \(t\in[0,1]\). Then
\[
    \|\alpha\beta\|_{\dot{\mathbb A}_{x,t}^{-s}}
    \lesssim_s
    \|\alpha\|_{\mathbb A_{x,t}^{s}}
    \|\beta\|_{\dot{\mathbb A}_{x,t}^{-s}}.
\]
\end{lemma}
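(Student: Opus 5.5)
The plan is to prove the estimate directly on the Fourier side. The product becomes a convolution, and the weight is handled by a Peetre-type inequality. For the product we have $\widehat{\alpha\beta}(\xi) = \sum_{\eta} \hat\alpha(\xi-\eta)\hat\beta(\eta)$, and since $\hat\beta(0)=0$ only $\eta\neq 0$ contributes. Therefore
\[
\|\alpha\beta\|_{\dot{\A}^{-s}} \le \sum_{\xi\neq 0}\sum_{\eta\neq 0} |\xi|^{-s}\,|\hat\alpha(\xi-\eta)|\,|\hat\beta(\eta)|.
\]
The key pointwise inequality is
\[
|\xi|^{-s} \lesssim_s \langle \xi-\eta\rangle^{s}\,|\eta|^{-s} \qquad \text{for } \xi,\eta\in\Z\setminus\{0\},\ s\ge 0.
\]
This is equivalent to $|\eta|^{s}\lesssim \langle\xi-\eta\rangle^{s}|\xi|^{s}$. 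That in turn follows from $|\eta|\le |\xi|+|\xi-\eta| \le |\xi|(1+|\xi-\eta|)\le \sqrt{2}\,|\xi|\langle \xi-\eta\rangle$, where we use $|\xi|\ge 1$ because $\xi$ is a nonzero integer. Raising to the power $s\ge0$ gives the claim with constant $2^{s/2}$.

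Inserting this bound, we apply Tonelli (all terms are nonnegative) and substitute $\zeta=\xi-\eta$, letting $\zeta$ range over all of $\Z$. This gives
\[
\|\alpha\beta\|_{\dot{\A}^{-s}} \lesssim_s \sum_{\eta\neq0}|\eta|^{-s}|\hat\beta(\eta)|\sum_{\zeta\in\Z}\langle\zeta\rangle^{s}|\hat\alpha(\zeta)| = \|\alpha\|_{\A^s}\|\beta\|_{\dot{\A}^{-s}}.
\]
For smooth $\alpha,\beta$ all sums converge absolutely, so the convolution formula and the interchange of sums are justified.

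For the spacetime version, I repeat the argument with $\|\cdot\|_{C_t^0}$ placed inside. For each fixed $\xi$ we have $\|\widehat{\alpha\beta}(t,\xi)\|_{C^0_t}\le \sum_{\eta\neq0}\|\hat\alpha(t,\xi-\eta)\|_{C_t^0}\|\hat\beta(t,\eta)\|_{C_t^0}$, by the triangle inequality and submultiplicativity of the sup norm. The same weight inequality and Fubini then close the estimate. The one point needing care is to make sense of $\alpha\beta$ and its Fourier coefficients when $\alpha,\beta$ are only in these spaces rather than smooth.

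This step is the main (mild) obstacle. I would handle it as follows. Since $\sum_\zeta \|\hat\alpha(\cdot,\zeta)\|_{C^0_t}<\infty$ with $s\ge 0$, the Fourier series of $\alpha$ converges absolutely and uniformly, so $\alpha\in C^0_{t,x}$. Hence $\alpha(t)\beta(t)$ is a well-defined distribution for each $t$. Its Fourier coefficients are given by the convolution series, which converges absolutely in $C^0_t$ by the bound just proved. The limit is continuous in $t$ because it is a uniform limit of finite sums of continuous functions. Alternatively, one can truncate $\alpha,\beta$ to finitely many modes, apply the smooth case, and pass to the limit in the Banach space $\dot{\mathbb A}^{-s}_{x,t}$. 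The truncated products converge to $\alpha\beta$ in the sense of distributions, so the limit identifies $\alpha\beta$. The bound holds with a constant depending only on $s$.
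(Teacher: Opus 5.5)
Your proposal is correct and follows the paper's proof. Both drop the $\eta=0$ term using $\hat\beta(t,0)=0$, bound $|\xi|^{-s}\lesssim_s|\eta|^{-s}\langle\xi-\eta\rangle^{s}$ via $|\eta|\le|\xi|(1+|\xi-\eta|)$ and $|\xi|\ge1$, then apply Tonelli and the substitution $\zeta=\xi-\eta$, carrying the $C_t^0$ norms inside the convolution for the spacetime version.

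Your extra discussion of how $\alpha\beta$ is defined for non-smooth inputs goes beyond the paper. One caveat: continuity of $\alpha$ alone does not make $\alpha(t)\beta(t)$ meaningful when $\beta(t)$ is a distribution of positive order. The product should simply be \emph{defined} by the absolutely convergent convolution series (equivalently, as your truncation limit), and that convergence is exactly what your estimate justifies.
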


\begin{proof}
    Without loss of generality we show for spacetime version.
Since \(\beta(t,\cdot)\) has zero spatial mean, we have
\[
    \widehat\beta(t,0)=0
    \qquad
    \text{for all }t\in[0,1].
\]
Therefore, for every \(\xi\neq0\),
\[
    \widehat{\alpha\beta}(t,\xi)
    =
    \sum_{\eta\in\mathbb Z}
    \widehat\alpha(t,\xi-\eta)\widehat\beta(t,\eta)
    =
    \sum_{\eta\neq0}
    \widehat\alpha(t,\xi-\eta)\widehat\beta(t,\eta).
\]
Hence
\[
\begin{aligned}
\|\alpha\beta\|_{\dot{\mathbb A}_{x,t}^{-s}}
&=
\sum_{\xi\neq0}
|\xi|^{-s}
\left\|
    \widehat{\alpha\beta}(\cdot,\xi)
\right\|_{C_t^0}
\\
&\leq
\sum_{\xi\neq0}
|\xi|^{-s}
\left\|
    \sum_{\eta\neq0}
    \widehat\alpha(\cdot,\xi-\eta)
    \widehat\beta(\cdot,\eta)
\right\|_{C_t^0}
\\
&\leq
\sum_{\xi\neq0}
\sum_{\eta\neq0}
|\xi|^{-s}
\|\widehat\alpha(\cdot,\xi-\eta)\|_{C_t^0}
\|\widehat\beta(\cdot,\eta)\|_{C_t^0}.
\end{aligned}
\]
For \(s\geq0\) and \(\xi,\eta\neq0\), we use
\[
    |\xi|^{-s}
    \lesssim_s
    |\eta|^{-s}
    \langle \xi-\eta\rangle^s.
\]
Indeed,
\[
    |\eta|
    \leq |\xi|+|\xi-\eta|
    \leq |\xi|(1+|\xi-\eta|),
\]
because \(|\xi|\geq1\). Therefore
\[
    |\xi|^{-s}
    \leq
    |\eta|^{-s}(1+|\xi-\eta|)^s
    \lesssim_s
    |\eta|^{-s}\langle \xi-\eta\rangle^s.
\]
Thus
\[
\begin{aligned}
\|\alpha\beta\|_{\dot{\mathbb A}_{x,t}^{-s}}
&\lesssim_s
\sum_{\xi\neq0}
\sum_{\eta\neq0}
|\eta|^{-s}
\langle \xi-\eta\rangle^s
\|\widehat\alpha(\cdot,\xi-\eta)\|_{C_t^0}
\|\widehat\beta(\cdot,\eta)\|_{C_t^0}
\\
&=
\sum_{\eta\neq0}
|\eta|^{-s}
\|\widehat\beta(\cdot,\eta)\|_{C_t^0}
\sum_{\xi\neq0}
\langle \xi-\eta\rangle^s
\|\widehat\alpha(\cdot,\xi-\eta)\|_{C_t^0}.
\end{aligned}
\]
For fixed \(\eta\), setting \(m=\xi-\eta\), we have
\[
    \sum_{\xi\neq0}
    \langle \xi-\eta\rangle^s
    \|\widehat\alpha(\cdot,\xi-\eta)\|_{C_t^0}
    \leq
    \sum_{m\in\mathbb Z}
    \langle m\rangle^s
    \|\widehat\alpha(\cdot,m)\|_{C_t^0}
    =
    \|\alpha\|_{\mathbb A_{x,t}^s}.
\]
Consequently,
\[
\begin{aligned}
\|\alpha\beta\|_{\dot{\mathbb A}_{x,t}^{-s}}
&\lesssim_s
\|\alpha\|_{\mathbb A_{x,t}^s}
\sum_{\eta\neq0}
|\eta|^{-s}
\|\widehat\beta(\cdot,\eta)\|_{C_t^0}
\\
&=
\|\alpha\|_{\mathbb A_{x,t}^s}
\|\beta\|_{\dot{\mathbb A}_{x,t}^{-s}}.
\end{aligned}
\]
\end{proof}

\subsection{Intermittent Slabs}\label{subsec:intermittency}

As mentioned previously, intermittency plays a crucial role in the construction. Roughly speaking, an intermittent function is one which is highly concentrated in space and whose $L^p$ norms obey radically different estimates. For the same reasons discussed in \cite{Gismondi} and \cite{Pathak}, we utilize a generic intermittent building block which we do not require to be a stationary solution of any PDE. However, unlike these schemes, one must utilize the maximum amount of intermittency in order to achieve the desired Sobolev estimates; this is more reminiscent of what is presented in \cite{GR}. The reason for this is clear: The intermittent slabs constructed in Lemma \ref{lem:boldW} will be $L^k$-normalized, and so for $k \geq 3$, to achieve the maximum $L^2$-based decay one must saturate Bernstein's inequality. In the language of Lemma \ref{lem:boldW}, this means sending $\epsilon \to 0$. This provides "room" for a positive amount of fractional derivatives to be taken and maintain decay. However, when $k = 2$, the intermittent slab is $L^2$ normalized, so therefore any amount of fractional anti-derivatives will give decay. For this reason, in the \textsf{KdV} case, one may instead choose to utilize a vanishingly small amount of intermittency in order to achieve in some sense the ideal amount of Besov regularity allowed by the scheme. We will not pursue this here, but the scheme utilized in \cite{Pathak} can easily be adapted to the time dependent \textsf{KdV} setting.

\begin{lemma}[\textbf{$L^k$ normalized intermittent slabs}]\label{lem:boldW}
    Fix $k \geq 2$. Let $\lambda$ be a large power of $2$ and take $0 < \epsilon < 1$ such that $\lambda^{\epsilon}$ is also an integer. Then there exist smooth $\rho_{k,\lambda,\epsilon}:\T \to \R$ such that
    \begin{enumerate}
        \item\label{w:1} $\int_{\T} \rho_{k,\lambda,\epsilon} = 0$ and $\int_\T \rho_{k, \lambda, \epsilon}^k = 1$;
        \item\label{w:2} $\Vert \rho_{k,\lambda,\epsilon} \Vert_{L^p(\T)} \lesssim \lambda^{(1-\epsilon)\left(\frac{1}{k}-\frac{1}{p}\right)}$;
        \item\label{w:3} $\rho_{k,\lambda,\epsilon}$ is $\frac{\T}{\lambda^{\epsilon}}$-periodic.
    \end{enumerate}
\end{lemma}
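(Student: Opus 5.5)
We build $\rho_{k,\lambda,\epsilon}$ by rescaling and periodizing a single fixed profile. Fix a smooth, compactly supported $\psi:\R\to\R$, supported in $(-\tfrac12,\tfrac12)$, with $\int_\R\psi=0$ and $\int_\R\psi^k>0$. If $k$ is even, any nonzero mean-zero bump works. If $k$ is odd, we need $\int\psi^k\neq0$, which a generic choice gives: take $\psi=\psi_1-\psi_2$ with $\psi_1,\psi_2\ge0$ bumps of equal integral but different heights and widths, e.g.\ $\psi_2(x)=\tfrac12\psi_1(x/2)$ suitably translated. Then $\int\psi_1^k\neq\int\psi_2^k$ for $k\ge2$. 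If the resulting integral is negative, replace $\psi$ by $-\psi$ (this only matters for odd $k$). Normalize so that $\int_\R\psi^k=1$.

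\textbf{Construction.} Set $\mu=\lambda^{1-\epsilon}$, a large number, and $N=\lambda^\epsilon\in\mathbb N$, so that $\mu N=\lambda$. On one period cell $[0,1/N)$, place a bump of width $\sim 1/\lambda$ with $L^k$-normalized amplitude relative to the cell:
\[
\rho_{k,\lambda,\epsilon}(x)=\mu^{1/k}\sum_{j\in\Z}\psi\bigl(\lambda(x-j/N)\bigr),
\]
viewed as a function on $\T$. Since $\lambda/N=\mu\ge 1$, the translates have disjoint supports (each has width $1/\lambda\le 1/N$). The sum is $1/N$-periodic, so it descends to $\T$ because $N$ is an integer; this is item \eqref{w:3}. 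Smoothness is inherited from $\psi$.

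\textbf{Verification.} The function has zero mean because each bump has zero integral. For the $k$-th power, disjointness of supports gives
\[
\int_\T\rho^k=N\mu\int_\R\psi(\lambda y)^k\,dy=N\mu\lambda^{-1}=1,
\]
which is item \eqref{w:1}. For item \eqref{w:2}, disjointness again gives
\[
\|\rho\|_{L^p}^p=N\mu^{p/k}\lambda^{-1}\|\psi\|_{L^p}^p=\mu^{p/k-1}\|\psi\|_{L^p}^p,
\]
so $\|\rho\|_{L^p}\lesssim\mu^{1/k-1/p}=\lambda^{(1-\epsilon)(1/k-1/p)}$ for $1\le p<\infty$. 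The case $p=\infty$ follows directly, since $\|\rho\|_{L^\infty}=\mu^{1/k}\|\psi\|_{L^\infty}$. The implicit constant depends only on $\psi$, $k$ and $p$.

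The only genuine point of care is the odd-$k$ sign issue: we need the mean-zero condition and a positive $k$-th moment to hold simultaneously, and the asymmetric two-bump profile above handles this. Everything else is scaling.
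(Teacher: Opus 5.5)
Your proof is correct, and the construction is the same as the paper's. Both periodize $\lambda^{(1-\epsilon)/k}\phi(\lambda x)$ with period $\lambda^{-\epsilon}$; the paper writes the translates as $\phi_k(\lambda x+\lambda^{1-\epsilon}n)$, which is your $\psi(\lambda(x-j/N))$ with $j=-n$. Both then read off the zero mean, the $k$-th moment and the $L^p$ bounds from disjointness of supports.

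The only real difference is how you produce a profile with positive $k$-th moment when $k$ is odd. The paper takes an autocorrelation $\phi=\psi_k\ast\widetilde\psi_k$ of a mean-zero bump. Then $\widehat\phi=|\widehat{\psi_k}|^2\ge 0$ and $\int\phi^k=(\widehat\phi\ast\cdots\ast\widehat\phi)(0)>0$. Strict positivity there tacitly uses that $\widehat\phi$ vanishes only on a null set, for instance because $\widehat{\psi_k}$ is entire. Your asymmetric two-bump profile makes positivity completely elementary: with $\psi_1$ and the translated $\psi_2$ chosen to have disjoint supports, $\int\psi^k=(1-2^{1-k})\int\psi_1^k>0$ for odd $k$. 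You should state that disjointness explicitly, since "suitably translated" is carrying it.

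In exchange, the paper's choice gives the extra property $\widehat{\phi_k}\ge 0$, i.e.\ nonnegative Fourier coefficients of $\rho_{k,\lambda,\epsilon}$. This is not part of the lemma's statement, but the paper uses it later to get the clean bound \eqref{eq:E-coeff-bound}, $K_{q+1}\le 1$, in the proof of item (5) of Proposition~\ref{prop:ind}. Your profile lacks this property, so if it replaced the paper's, that step would need a separate, profile-dependent bound on $K_{q+1}$.
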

\begin{proof}
For fixed $k\ge 2$, choose a nonzero real-valued $\psi_k \in C_c^\infty(-1/2,1/2)$ with 
\[
\int_{\R} \psi_k = 0.
\]
Set
\[
\widetilde\psi_k(x):=\psi_k(-x), \qquad \phi:=\psi_k \ast \widetilde\psi_k.
\]
Then $\phi\in C_c^\infty(-1,1)$,
\[
\int_{\R}\phi=0, \qquad \widehat\phi(\xi)=|\widehat{\psi_k}(\xi)|^2\ge0.
\]
Since $\widehat\phi\ge0$ and $\phi\not\equiv0$, we have
\[
\int_{\R}\phi^k >0.
\]
We normalize
\[
\phi_k:=\frac{\phi}{\left(\int_{\R}\phi^k\right)^{1/k}}
\]
Thus
\[
\phi_k\in C_c^\infty(-1,1), \qquad \int_{\R}\phi_k=0, \qquad \int_{\R}\phi_k^k=1, \qquad \widehat{\phi_k}\ge0.
\]
Then define 
    \begin{equation}\label{eq:rho}
        \rho_{k, \lambda, \epsilon}(x) = \sum_{n \in \Z} \lambda^{(1 - \epsilon)\frac{1}{k}}\phi_k(\lambda x+ \lambda^{1  - \epsilon} n)
    \end{equation}
    Using the number of active Fourier coefficients is $\#\{n \mid |\lambda x + \lambda^{1 - \epsilon}n| \lesssim 1\} \sim \lambda^{\epsilon}$, and support size $\lambda^{-1}$, one has 
    \begin{align*}
        \norm{\rho_{k, \lambda, \epsilon}}_{L^p}^p &\lesssim \lambda^{\epsilon} \cdot  \lambda^{(1 - \epsilon)\frac{p}{k}} \cdot \lambda^{-1} \\
         \norm{\rho_{k, \lambda, \epsilon}}_{L^p}&\lesssim  \lambda^{(1 - \epsilon)(\frac{1}{k} - \frac{1}{p})}
    \end{align*}
\end{proof}

\begin{lemma}[\textbf{Fourier series representation of the intermittent slab}]\label{lem:fourier}
    The Fourier series representation of $\rho_{k,\lambda,\epsilon}$ is
    \begin{equation}\label{eq:fourier_series}
        \rho_{k,\lambda,\epsilon}(x) = \sum_{n \in \Z} \lambda^{(1-\epsilon)\left(\frac{1}{k} - 1\right)} \hat{\phi}_k\left(\lambda^{\epsilon-1}n\right) e^{2\pi i \lambda^{\epsilon}  n x}.
    \end{equation}
    In particular 
    \begin{equation}\label{eq:fourier_series rho k}
        \rho_{k, \lambda, \epsilon}^k(x) = \sum_{n \in \Z} \hat{g}_k(\lambda^{\epsilon - 1} n) e^{2\pi i \lambda^\epsilon n x}.
    \end{equation}
    for some $g_k \in C_c^\infty(-1, 1)$, and $\int_\T g_k = 1$.
\end{lemma}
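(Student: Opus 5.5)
The plan is to compute both Fourier series directly from the periodized profile in \eqref{eq:rho}, via Poisson summation.

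\textbf{The series for $\rho_{k,\lambda,\epsilon}$.} Write $\rho_{k,\lambda,\epsilon}(x)=\lambda^{(1-\epsilon)/k}\sum_{n}\phi_k(\lambda(x+\lambda^{-\epsilon}n))$. This exhibits $\rho_{k,\lambda,\epsilon}$ as the $\lambda^{-\epsilon}$-periodization of $F(x):=\lambda^{(1-\epsilon)/k}\phi_k(\lambda x)$, where $F$ is smooth and compactly supported in $(-\lambda^{-1},\lambda^{-1})$. Since $\lambda^\epsilon$ is an integer, the period $\lambda^{-\epsilon}$ divides $1$. Hence $\rho_{k,\lambda,\epsilon}$ is a well-defined function on $\T$, and its only nonzero Fourier modes are $\xi=\lambda^\epsilon n$.

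Poisson summation for a $T$-periodization with $T=\lambda^{-\epsilon}$ gives
\[
\sum_n F(x+Tn)=T^{-1}\sum_m \hat F(m/T)e^{2\pi i m x/T}.
\]
Here $\hat F(\eta)=\lambda^{(1-\epsilon)/k}\lambda^{-1}\hat\phi_k(\eta/\lambda)$. Substituting $\eta=\lambda^\epsilon m$, the coefficient of $e^{2\pi i\lambda^\epsilon m x}$ is
\[
\lambda^\epsilon\cdot\lambda^{(1-\epsilon)/k-1}\hat\phi_k(\lambda^{\epsilon-1}m)=\lambda^{(1-\epsilon)(\frac1k-1)}\hat\phi_k(\lambda^{\epsilon-1}m),
\]
which is \eqref{eq:fourier_series}. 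Convergence of the series is absolute and uniform, because $\hat\phi_k$ is Schwartz. Alternatively, one can compute $\hat\rho(\xi)=\int_\T\rho\,e^{-2\pi i\xi x}dx$ directly by unfolding the sum over $n$ into an integral over $\R$. I would present this unfolding version, since it avoids quoting Poisson summation with a general period.

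\textbf{The series for $\rho_{k,\lambda,\epsilon}^k$.} The key observation is that for $\lambda$ large the translates in \eqref{eq:rho} have disjoint supports. Each summand is supported in an interval of length $2\lambda^{-1}$, and the spacing between translates is $\lambda^{-\epsilon}\ge 2\lambda^{-1}$ once $\lambda^{1-\epsilon}\ge2$. This is the only mild point to check, and it holds because $\lambda$ is a large power of $2$ and $\epsilon<1$; if needed one shrinks the support of $\psi_k$ slightly.

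Disjointness means there are no cross terms, so
\[
\rho_{k,\lambda,\epsilon}^k(x)=\sum_n\lambda^{1-\epsilon}\phi_k^k(\lambda x+\lambda^{1-\epsilon}n).
\]
This is exactly the same periodization structure as before, now with profile $g_k:=\phi_k^k\in C_c^\infty(-1,1)$ and amplitude $\lambda^{1-\epsilon}$ in place of $\lambda^{(1-\epsilon)/k}$. Repeating the computation above, the coefficient of $e^{2\pi i\lambda^\epsilon n x}$ is
\[
\lambda^\epsilon\cdot\lambda^{1-\epsilon}\lambda^{-1}\hat g_k(\lambda^{\epsilon-1}n)=\hat g_k(\lambda^{\epsilon-1}n),
\]
which is \eqref{eq:fourier_series rho k}. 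Finally, $\int_\R g_k=\int_\R\phi_k^k=1$ by the normalization chosen in the proof of Lemma~\ref{lem:boldW}. This is the statement $\int g_k=1$, understood as an integral over $\R$, equivalently $\hat g_k(0)=1$, which is consistent with $\int_\T\rho^k=1$.

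I expect no real obstacle here. The only care needed is the disjoint-support condition and keeping track of the powers of $\lambda$ in the scaling.
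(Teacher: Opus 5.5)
Your proposal is correct and follows essentially the same route as the paper. For \eqref{eq:fourier_series} you apply Poisson summation to the $\lambda^{-\epsilon}$-periodization in \eqref{eq:rho}; for \eqref{eq:fourier_series rho k} you use disjointness of the translates (once $\lambda^{1-\epsilon}\ge 2$) to write $\rho_{k,\lambda,\epsilon}^k$ as the periodization of $g_k=\phi_k^k$ with amplitude $\lambda^{1-\epsilon}$, then repeat the Poisson summation computation. Your powers of $\lambda$ check out, and reading the normalization as $\int_{\R} g_k=\hat g_k(0)=1$ is the right interpretation of the paper's $\int_\T g_k=1$.
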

\begin{proof}
    Applying the Poisson summation formula to ~\eqref{eq:rho} gives ~\eqref{eq:fourier_series}. Now define $g_k = \phi_k^k$ and using disjoint support for large $\lambda$ one obtain 
    \[\rho_{k, \lambda, \epsilon}^k(x) = \sum_{n \in \Z} \lambda^{1 - \epsilon} g_k(\lambda x + \lambda^{1 - \epsilon}n)\]
    Thus applying Poisson summation yields the result.
\end{proof}

\begin{lemma}[\textbf{Projections onto frequencies larger than maximum effective frequency)}]\label{lem:freq_proj_rho}
    Set $\nu = \lambda^{1 + \epsilon}$. Then
    $$
    \Vert \mathbb{P}_{\geq \nu}(\rho_{k,\lambda,\epsilon}) \Vert \lesssim \lambda^{\frac{1-\epsilon}{k}} \lambda^{-N\epsilon}
    $$
    for any $N > 0$. The implicit constant in the inequality above depends on $N$, $\epsilon$, and $\nu$, but not $\lambda$.
\end{lemma}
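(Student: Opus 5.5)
The norm in the statement is unspecified; I read it as a Wiener-type or $L^\infty$ (hence any $L^p$) bound on the tail $\mathbb{P}_{\geq \nu}\rho_{k,\lambda,\epsilon}$. I will bound the $\ell^1$ sum of the discarded Fourier coefficients. Since $\|f\|_{L^\infty}\le \sum_\xi |\hat f(\xi)|$, this controls every $L^p$ norm as well.

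The starting point is the explicit Fourier series \eqref{eq:fourier_series} from Lemma \ref{lem:fourier}:
\[
\rho_{k,\lambda,\epsilon}(x)=\sum_{n\in\Z}\lambda^{(1-\epsilon)(\frac1k-1)}\hat\phi_k(\lambda^{\epsilon-1}n)e^{2\pi i\lambda^\epsilon n x}.
\]
The only active frequencies are $\xi=\lambda^\epsilon n$. The multiplier of $\mathbb{P}_{\geq\nu}$ lies in $[0,1]$ and vanishes unless $|\xi|\gtrsim \nu=\lambda^{1+\epsilon}$, which corresponds to $|n|\gtrsim\lambda$. So
\[
\|\mathbb{P}_{\geq\nu}\rho_{k,\lambda,\epsilon}\|\le \lambda^{(1-\epsilon)(\frac1k-1)}\sum_{|n|\gtrsim \lambda}|\hat\phi_k(\lambda^{\epsilon-1}n)|.
\]

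For these $n$, the argument $y=\lambda^{\epsilon-1}n$ satisfies $|y|\gtrsim\lambda^{\epsilon}$. Because $\phi_k\in C_c^\infty$, its transform is Schwartz, so $|\hat\phi_k(y)|\lesssim_M \langle y\rangle^{-M}$ for every $M$. I then compare the sum with an integral, using spacing $\lambda^{\epsilon-1}$ in $y$:
\[
\sum_{|n|\gtrsim\lambda}|\hat\phi_k(\lambda^{\epsilon-1}n)|\lesssim \lambda^{1-\epsilon}\int_{|y|\gtrsim\lambda^\epsilon}\langle y\rangle^{-M}\,dy\lesssim \lambda^{1-\epsilon}\lambda^{-(M-1)\epsilon}.
\]
Multiplying by the prefactor gives $\lambda^{\frac{1-\epsilon}{k}}\lambda^{-(M-1)\epsilon}$. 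Choosing $M=N+1$ yields the claim. The implicit constant depends only on $N$ and $\phi_k$, hence on $k$, which is consistent with the stated dependence.

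The main obstacle is bookkeeping about where $\mathbb{P}_{\geq\nu}$ actually cuts off. The Littlewood-Paley cutoffs are dyadic with a smooth transition, so I will use only that the multiplier vanishes for $|\xi|\le c\nu$ with a fixed $c>0$. That changes the threshold to $|n|\ge c\lambda$ and affects only the constant. I also have to keep the lattice-to-integral comparison honest when the spacing $\lambda^{\epsilon-1}$ is small; this is fine because the integrand $\langle y\rangle^{-M}$ is monotone on each tail.
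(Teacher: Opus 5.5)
Your proof is correct. The paper does not write out an argument for this lemma: it refers to Lemma 2.10 of \cite{Pathak} and leaves the adaptation to the reader. Your computation, which works directly from the Poisson-summation formula \eqref{eq:fourier_series} of Lemma~\ref{lem:fourier}, therefore supplies those omitted details rather than competing with a written proof.

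The bookkeeping is right. The modes of $\rho_{k,\lambda,\epsilon}$ lie on $\lambda^{\epsilon}\Z\setminus\{0\}$, so the tail only sees $|n|\ge c\lambda$, i.e.\ arguments $|\lambda^{\epsilon-1}n|\ge c\lambda^{\epsilon}$ of the Schwartz function $\hat\phi_k$. The lattice-to-integral comparison at spacing $\lambda^{\epsilon-1}$ produces exactly the factor $\lambda^{1-\epsilon}$, which turns the prefactor $\lambda^{(1-\epsilon)(\frac1k-1)}$ into $\lambda^{\frac{1-\epsilon}{k}}$. The endpoint term of that comparison is harmless.

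Your route bounds the $\ell^1$ Fourier (Wiener) norm of the tail. That dominates the $L^\infty_x$ norm actually used in the oscillation-error estimate. The constant depends only on $N$ and $k$ and is uniform in $\epsilon$, which is better than the stated dependence.

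One small correction to your cutoff bookkeeping. As literally defined in the paper, $\mathbb P_{\le 2^j}$ omits the $\varphi_0$ shell, so the complementary multiplier also equals $1$ at $\xi=\pm1$. It is not supported only on $|\xi|\gtrsim\nu$. This does not matter here: $\lambda^{\epsilon}$ is an integer greater than $1$, so $\hat\rho_{k,\lambda,\epsilon}(\pm1)=0$.
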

\begin{proof}
    The proof of this follows \cite[Lemma 2.10]{Pathak} with the necessary modifications for the frequency truncation, setting $a = 1$, and the new intermittent slabs deployed here, as well as carefully tracking the asymptotics in $\lambda$. We leave the details to the reader.
\end{proof}

\section{Convex Integration Scheme}\label{sec:3}
We now describe the convex integration mechanism. The starting point is the relaxed version of \eqref{eq:gkdv}, 
\begin{equation}\label{eq:relaxed}
    \partial_t u + \partial_{xxx} u + \sigma \partial_x\left(u^k\right) = \partial_x E,
\end{equation}
where $E:[0,1]\times\T\to\R$ is the error. The goal of the iteration is to construct smooth pairs $(u_q,E_q)$ solving \eqref{eq:relaxed}, while driving $E_q\to0$ in a weighted Wiener norm with a negative regularity index and keeping $u_q$ convergent in the desired subcritical Sobolev spaces.

Assume that $(u_q,E_q)$ is already a smooth solution of \eqref{eq:relaxed}. Define
\begin{equation}\label{eq:uq+1}
    u_{q+1}=u_q+w_{q+1},
\end{equation}
where $w_{q+1}$ is a highly oscillatory intermittent perturbation with zero mean. This is chosen so that the mean of its $k^{th}$ power cancels the old error, $E_q$, possibly with some remainder terms which are small in the weighted Wiener spaces.

% More precisely, we take
% \[
% w_{q+1}= \mathbb P_{\le \mu_{q+1}^{1/2}}a_q \mathbb P_{\le \nu_{q+1}} \rho_{q+1} h_{q+1},
% \]
% where
% \[
% a_q = (A_{q+1}-\sigma E_q)^{1/k}.
% \]
% The parameter $A_{q+1}$ is chosen large enough so that this amplitude is smooth and positive. The function $\rho_{q+1}$ is the itermittent slab from Lemma~\ref{lem:boldW}, normalized by
% \[
% \int_\T \rho_{q+1}=0, \qquad \int_\T \rho_{q+1}^k=1.
% \]
% The cutoff $h_{q+1}$ is identically $1$ on the temporal support of the previous stage and slightly enlarges the support of the solution. The frequency parameters are chosen lacunary, with
% \[
% \epsilon_{q+1}= 2^{-(q+1)}, \qquad \mu_{q+1}=\lambda_{q+1}^{\epsilon_{q+1}}, \qquad \nu_{q+1}= \lambda_{q+1}^{1+\epsilon_{q+1}}.
% \]

More precisely, expanding the relaxed equation for $u_{q+1}$, we obtain 
\begin{equation*}
    \begin{split}
        \partial_x E_{q+1} &= \partial_t u_{q+1} + \partial_{xxx} u_{q+1} + \sigma \partial_x \left(u_{q+1}^k\right)\\
        &= \partial_t w_{q+1} + \partial_x \left(E_q + \sigma w_{q+1}^k\right) + \partial_{xxx} w_{q+1} + \sigma \partial_x\left(\sum_{n=1}^{k-1} \binom{k}{n} w_{q+1}^n u_q^{k-n} \right)
    \end{split}
\end{equation*}
Since $w_{q+1}$ has zero spatial mean, let $\omega_{q+1}$ denote its mean-free antiderivative
\[
\partial_x\omega_{q+1}=w_{q+1}.
\]
We then define
\begin{equation}\label{eq:osc_error}
    E_O = E_q + \sigma w_{q+1}^k,
\end{equation}
\begin{equation}\label{eq:Nash_error}
    E_N = \sigma \sum_{n=1}^{k-1} \binom{k}{n} w_{q+1}^n u_q^{k-n}
\end{equation}
\begin{equation}\label{eq:dis_error}
    E_D = \partial_{xx} w_{q+1}
\end{equation}
and
\begin{equation}\label{eq:temporal_error}
    E_T = \partial_t \omega_{q+1}.
\end{equation}
Thus
\begin{equation}\label{eq:Rq+1}
    E_{q+1} = E_O + E_N + E_D + E_T.
\end{equation}
We refer to these four terms as the oscillation error, Nash error, dispersion error and temporal error, respectively. 

The oscillation error is the main cancellation. We will use that
$$
\sigma\mathbb{P}_{=0}(w_{q+1}^k) = -E_q + R_{q+1}
$$
where $R_{q+1}$ is some remainder term which can be made arbitrarily small in $\dot{\mathbb{A}}^{-s}_{x,t}$ for some $s > 0$ independent of $q$ by choosing the frequency of $w_{q+1}$ large enough. Thus
$$
E_O = R_{q+1} + \sigma\mathbb{P}_{\not=0}(w_{q+1}^k).
$$
and finally using that the minimum frequency of $\mathbb{P}_{\not=0}(w_{q+1}^k)$ is very large, we will be able to show this term can be made arbitrarily small in $\dot{\mathbb{A}}^{-s}_{x,t}$.

% Since
% \[
% \sigma a_q^k=\sigma A_{q+1}-E_q
% \]
% and $h_{q+1}=1$ on the support of $E_q$, the zero Fourier mode of $\rho_{q+1}^k$ cancels $E_q$, up to the harmless spatial constant $\sigma A_{q+1}h_{q+1}^k$. The nonzero modes of $\rho_{q+1}^k$ oscillate at frequencies $\gtrsim\mu_{q+1}$, and therefore small in $\dot{\mathbb A}^{-s}$ once $s$ is chosen large and $\lambda_{q+1}$ is chosen sufficiently large.

The remaining errors are perturbative. The Nash error contains lower powers of $w_{q+1}$, and is small because the intermittent slab is small in every $L^p$ with $p<k$. The dispersion error is controlled in the negative weighted Wiener norm by the $C_t^0L_x^1$ norm of $w_{q+1}$, which is small. The temporal error is controlled by taking a mean-free antiderivative of $w_{q+1}$, which gains a factor of frequency, and we design $w_{q+1}$ such that the loss from the time derivative is controlled by the $C_t^0L_x^1$ norm.

The induction below makes these estimates quantitative. It also keeps track of the absolute Fourier summability of the nonlinear products, which is needed in order to pass to the limit and obtain a weak singular solution in the sense of Definition~\ref{def:weak_para_soln}.

\section{Inductive Proposition}\label{sec:4}

\begin{proposition}\label{prop:ind}
    Fix $0 < \beta < \frac{1}{16}\left(1 - \frac{1}{k}\right)$. There are sequences $u_q:[0,1] \times \T \to \R$, $E_q : [0,1] \times \T \to \R$, and $\lambda_q \in \R$, $q \geq 0$, such that the following hold:
    \begin{enumerate}
        \item\label{i:1} Each $\lambda_q \geq 1$ is a power of $2$, $\lambda_{q'-1} < \lambda_{q'}$ for all $q' \leq q$ ($\lambda_{-1}=0$), and
        $$
        \sum_{q' \leq q}\lambda_{q'}^{-\beta} \leq \frac{1}{4} - 2^{-q-100}.
        $$
        \item\label{i:2} $(u_q,E_q)$ is a smooth solution to the relaxed equation given by ~\eqref{eq:relaxed}. In addition, $u_q$ has zero mean and the temporal support of both $u_q$ and $E_q$ is contained in
        $$
        \left(\frac{1}{2}- \sum_{q'\leq q} \frac{1}{\lambda_{q'}^\beta},\frac{1}{2} + \sum_{q' \leq q} \frac{1}{\lambda_{q'}^\beta}\right);
        $$
        \item\label{i:3} Upon setting $u_{-1}=0$, for every $q' \leq q$, the frequency support of $u_{q'} - u_{q'-1}$ lies in the annulus
        $$
        \left\{\xi : 2\lambda_{q'-1}^{3/2} < |\xi| < \lambda_{q'}^{3/2}\right\}.
        $$
        Also for every $1\le p<k$ and every
$q'\le q$ we have
\[
    \|u_{q'}-u_{q'-1}\|_{C_t^0L_x^p}
    \lesssim_p
    2^{-C(p)q'}.
\]
When $k\ge3$, for every $0<\alpha<\frac12-\frac1k$ and every $q'\le q$ we also
have
\[
    \|u_{q'}-u_{q'-1}\|_{C_t^0\dot H_x^\alpha}
    \lesssim_\alpha
    2^{-C(\alpha)q'}.
\]
In addition,
\[
    \|u_q\|_{C_t^0L_x^1}>\delta(1+2^{-q})
\]
for some $\delta>0$ chosen independent of $q$.
\item\label{i:4} The error $E_q$ satisfies the estimate
        \[
        \norm{E_q}_{\dot{\mathbb{A}}_{x,t}^{-s}} < 2^{-q-200}.
        \]
        \item\label{i:5} There is $C>0$ such that
        $$
        \sum_{\xi_1 + \ldots + \xi_k \not = 0} \left\Vert  \hat{u}_{q}(t,\xi_1) \hat{u}_{q}(t,\xi_2) \cdots \hat{u}_{q}(t,\xi_k) \right\Vert_{C_t^0}|\xi_1 + \ldots + \xi_k|^{-s} < C - 2^{-q-100}.
        $$
    \end{enumerate}
\end{proposition}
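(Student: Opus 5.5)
We argue by induction on $q$. For the base case we cannot take $u_0=0$, because item~\eqref{i:3} requires $\|u_0\|_{C_t^0L^1_x}>2\delta$. Instead take $u_0$ to be a smooth, real, mean-zero function. It should be supported in time near $t=1/2$, inside the interval of item~\eqref{i:2}, and have frequency support in $\{|\xi|<\lambda_0^{3/2}\}$; a single mode $\chi(t)\cos(2\pi \xi_0 x)$ works. Define $E_0$ as the mean-free spatial antiderivative of $\partial_t u_0+\partial_{xxx}u_0+\sigma\partial_x(u_0^k)$, which is smooth. We then scale the amplitude of $u_0$ and choose $\xi_0$ large. This makes $E_0$ small in $\dot{\mathbb A}^{-s}_{x,t}$, because the factor $|\xi|^{-s}$ is evaluated at frequencies $\gtrsim \xi_0$ and the $u_0^k$ part is handled the same way. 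The constant $C$ in item~\eqref{i:5} is fixed after $u_0$ is chosen, and $\delta$ is fixed from $\|u_0\|_{L^1}$.

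For the inductive step, given $(u_q,E_q)$ we choose a smooth time cutoff $h_{q+1}$. It equals $1$ on the support of $(u_q,E_q)$ and is supported in the enlarged interval obtained by adding $\lambda_{q+1}^{-\beta}$. We set $\epsilon_{q+1}=2^{-(q+1)}$, $\mu_{q+1}=\lambda_{q+1}^{\epsilon_{q+1}}$ and $\nu_{q+1}=\lambda_{q+1}^{1+\epsilon_{q+1}}$. The perturbation is
\[
w_{q+1}=\mathbb P_{\ge 2\lambda_q^{3/2}}\Bigl(\mathbb P_{\le \mu_{q+1}^{1/2}}a_{q}\;\mathbb P_{\le\nu_{q+1}}\rho_{k,\lambda_{q+1},\epsilon_{q+1}}\Bigr)h_{q+1},\qquad a_q=(A_{q+1}h_{q+1}^k-\sigma E_q)^{1/k}.
\]
Here $A_{q+1}$ is a large constant that makes the $k$-th root smooth. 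We add this constant back because spatial constants are invisible in $\dot{\mathbb A}^{-s}$ and in $\partial_x E$. The product has frequency support near $\mu_{q+1}\mathbb Z$ up to $\sim\nu_{q+1}<\lambda_{q+1}^{3/2}$, which gives the annulus condition; we take $\lambda_{q+1}^{\epsilon_{q+1}}\gg\lambda_q^{3/2}$. The two outer projections cost only $\lambda_{q+1}^{-N\epsilon_{q+1}}$ by Lemma~\ref{lem:freq_proj_rho}. Since $a_q$ is fixed and smooth, Lemma~\ref{lem:boldW}\eqref{w:2} gives
\[
\|w_{q+1}\|_{L^p}\lesssim_{a_q}\lambda_{q+1}^{(1-\epsilon_{q+1})(1/k-1/p)},
\]
and the Sobolev bound follows from the frequency localization at $\nu_{q+1}$ together with $p=2$. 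Choosing $\lambda_{q+1}$ enormous relative to all norms of $a_q$ yields the $2^{-Cq}$ bounds, which are then summable. These bounds also preserve the $L^1$ lower bound, since $\|w_{q+1}\|_{L^1}\ll\delta 2^{-q-1}$.

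The error estimates follow the decomposition \eqref{eq:Rq+1}.

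\textbf{Oscillation error.} Using \eqref{eq:fourier_series rho k}, the zero mode of $\rho^k$ times $a_q^k$ cancels $E_q$ up to a constant. What remains has the form $a_q^k\,\mathbb P_{\ne0}\rho^k$ plus commutator terms from the projections. The first is estimated by Lemma~\ref{lem:pseudo_diff_cont_Wiener}: we need $\|a_q^k\|_{\mathbb A^s}$ times $\|\mathbb P_{\ne0}\rho^k\|_{\dot{\mathbb A}^{-s}}$. The latter is $\sum_{n\ne0}|\hat g_k(\lambda^{\epsilon-1}n)|(\mu n)^{-s}\lesssim\lambda^{(1-\epsilon)}\mu^{-s}$. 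For this to be small we need $s\epsilon_{q+1}>1$, and that fails as $\epsilon\to0$.

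This is where I expect the main difficulty. The fix I would try first is to use the extra Fourier decay of $\hat g_k$ only near $n=0$, together with the weight. Summing $|\hat g_k(\lambda^{\epsilon-1}n)|$ against $(\mu|n|)^{-s}$ with $s>3$ gives $\mu^{-s}\sum_{n\ne 0}|n|^{-s}\lesssim\mu^{-s}$, because $\hat g_k$ is bounded. So the sum is $\lesssim\mu_{q+1}^{-s}$, and this is small once $\lambda_{q+1}$ is large. The condition $s>3$ is also what absorbs $E_D$, since $\partial_{xx}$ applied to $\partial_x^{-1}$-type errors costs frequency squared.

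\textbf{Nash error.} Lemma~\ref{lem:pseudo_diff_cont_Wiener} with the smooth factors $u_q^{k-n}$ reduces this to $\|\,|w|^n\|_{\dot{\mathbb A}^{-s}}\le\|w^n\|_{L^1}\sum_{\xi\ne0}|\xi|^{-s}$, which is small since $n<k$. The same argument controls $E_D$: we have $\|\partial_{xx}w\|_{\dot{\mathbb A}^{-s}}\lesssim\|w\|_{L^1}$ because $s>3$.

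\textbf{Temporal error.} $\partial_t\omega_{q+1}$ involves only time derivatives of $a_q$ and $h_{q+1}$, which are fixed. The antiderivative gains $\mu_{q+1}^{-1}$, and the $L^1$ smallness of $w$ makes this error small.

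Finally, item~\eqref{i:5} is obtained by expanding $u_{q+1}^k$. The new multilinear sums are bounded by products of $\ell^1$ Fourier norms of $w_{q+1}$ weighted by $|\xi_1+\dots+\xi_k|^{-s}$. The troublesome case is the full $w^k$ term. There we bound the nonzero-output modes using the explicit Fourier series of $\rho$, a lattice of spacing $\mu$ with nonnegative coefficients. This makes the absolute sum equal to the sum with signs, i.e.\ the Fourier coefficients of $\rho^k$, up to the smooth factor $a_q$. The total is then $\lesssim\mu^{-s}$ plus cross terms made small by $\lambda_{q+1}$ large, which fits within the $2^{-q-101}$ increment.
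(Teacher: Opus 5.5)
The obstacle you flag in the oscillation error is not real. Your own fix ($\hat g_k$ bounded and $\sum_{n\neq0}|n|^{-s}<\infty$) is exactly the paper's bound $\|\mathbb P_{\neq0}(\rho_{q+1}^k)\|_{\dot{\mathbb A}^{-s}}\lesssim\mu_{q+1}^{-s}$, and items 1--4 are essentially fine.

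The genuine gap is in item 5, in the pure $w_{q+1}^k$ term. You assert the total is $\lesssim\mu_{q+1}^{-s}$ plus terms that vanish as $\lambda_{q+1}\to\infty$, and that is false.

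Since $\mathbb P_{\le\mu_{q+1}^{1/2}}a_q$ has Fourier support in $|\ell|\le2\mu_{q+1}^{1/2}$, one has $\hat w_{q+1}(t,\mu_{q+1}\xi+\ell)=\widehat{\mathbb P_{\le\nu_{q+1}}\rho_{q+1}}(\xi)\,\widehat{\mathbb P_{\le\mu_{q+1}^{1/2}}a_q}(t,\ell)\,h_{q+1}(t)$, with $\xi$ a lattice index. So the absolute $k$-linear sum factors into a lattice part and an amplitude part. On the resonant set $\xi_1+\cdots+\xi_k=0$:
\begin{itemize}
\item the lattice part has total mass $K_{q+1}=\sum_{\xi_1+\cdots+\xi_k=0}\prod_i|\widehat{\mathbb P_{\le\nu_{q+1}}\rho_{q+1}}(\xi_i)|\approx\int_\T\rho_{q+1}^k=1$, which is not small;
\item the output frequency is $\ell_1+\cdots+\ell_k$, which can be $O(1)$, so the weight $|\cdot|^{-s}$ gains nothing.
\end{itemize}
This piece is therefore essentially $K_{q+1}\,AF(a_qh_{q+1},k)$, a stage-$q$ quantity that does not decay in $\lambda_{q+1}$. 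Positivity of $\hat\phi_k$ only tames the lattice part. The absolute values also fall on the amplitude coefficients, and the absolute convolution $|\hat a_q|^{*k}$ is not $\widehat{a_q^k}$. So the signed fact that the nonzero modes of $a_q^k$ are just $-\sigma\hat E_q$ is lost.

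What is missing is the paper's mechanism for this term: expand $a_q=A_{q+1}^{1/k}(1-\sigma E_q/A_{q+1})^{1/k}$ in its binomial series.
\begin{itemize}
\item The zeroth-order part gives the $A_{q+1}\mu_{q+1}^{-s}$ contribution you found.
\item The part linear in $E_q$, on the resonant set, contributes exactly $K_{q+1}\|E_q\|_{\dot{\mathbb A}^{-s}_{x,t}}$, since the factor $k|\binom{1/k}{1}|=1$ and the powers of $A_{q+1}$ cancel. Here $\hat\phi_k\ge0$ and $0\le m_{\le\nu_{q+1}}\le1$ give $K_{q+1}\le1$, and item 4 at stage $q$ gives the bound $2^{-q-200}$. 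So item 5 genuinely rests on item 4, which your sketch never invokes there.
\item The terms of order $\ge2$ in $E_q$ are $\lesssim C_q^2/A_{q+1}$ with $C_q=\|E_q\|_{\mathbb A^0_{x,t}}$. This is huge, since $E_q$ contains the previous oscillation error. Hence $A_{q+1}$ must be chosen large in terms of $C_q$ and $2^{q}$, not merely large enough to make the root smooth, and only afterwards is $\lambda_{q+1}$ taken large.
\end{itemize}

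Relatedly, the mixed terms cannot be bounded by products of $\ell^1$ Fourier norms, because the $\ell^1$ norm of $\hat w_{q+1}$ grows like $\lambda_{q+1}^{(1-\epsilon_{q+1})/k}$. Their decay $\lambda_{q+1}^{(1-\epsilon_{q+1})(m/k-1)}$ comes from the pointwise bound on the $m$-fold lattice convolution of $|\hat\rho_{q+1}|$. You need to split according to whether the $w$-frequencies sum to zero (the paper's $ZF$ bound) or not (Peetre's inequality, then the $AF$ bound).
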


\begin{proof}[Proof of Theorem~\ref{thm:main} using Proposition~\ref{prop:ind}] We start by verifying the base case of the inductive hypothesis. We put $\lambda_0$ as a sufficiently large power of $2$ depending on $\beta$ so that $\lambda_0^{-\beta} \leq \frac{1}{8}$, and let
$u_0(t,x) = A \cos(2\pi \lambda_0 x)\chi(t)$ for some $A>0$ chosen later, $\chi \in C^\infty([0,1])$ not identically $0$ and supported in $\left(\frac{1}{2} - \frac{1}{ \lambda_0 ^\beta},\frac{1}{2}+\frac{1}{ \lambda_0 ^\beta}\right)$, and $E_0$ given by
$$
E_0(t,x) = \frac{A}{ 2\lambda_0 \pi} \sin(2 \lambda_0 \pi x) \chi'(t) - A(2 \lambda_0 \pi)^2 \cos(2 \lambda_0 \pi x)\chi(t) + \sigma\left(A \cos(2 \lambda_0 \pi x) \chi(t)\right)^k.
$$
Items \ref{i:1} and \ref{i:2} are obvious. Items \ref{i:3} and \ref{i:4} hold upon choosing $A > 0$ small enough and $\delta = 2^{-2}\Vert u_0 \Vert_{C_t^0L_x^1}$. Item \ref{i:5} holds for a large enough choice of $C$, and thus the base case holds.

Now we prove Theorem \ref{thm:main}. Let us assume items \ref{i:1}-\ref{i:5} hold for all $q$. Item \ref{i:3} implies that $u_q$ is a Cauchy sequence in $C_t^0 L^{p}_x \cap C_t^0 \dot{H}^{\alpha}_x$ which is a Banach space. Hence there is $u \in C_t^0 L^{p}_x \cap C_t^0 \dot{H}^{\alpha}_x$
such that $u_q \to u$. Since this holds for all $p < k$ and all $\alpha < \frac{1}{2} - \frac{1}{k}$ we conclude that
$$
u \in \bigcap_{\epsilon > 0} C_t^0 L^{k - \epsilon}_x \cap C_t^0 \dot{H}^{\frac{1}{2}-\frac{1}{k} - \epsilon}.
$$
In addition, item \ref{i:3} gives that $u$ is not identically $0$. From items \ref{i:1} and \ref{i:2}, the temporal support of $u$ is contained within $[1/4,3/4]$.

Now we verify that $u$ solves the \textsf{gKdV} equation in the sense of Definition \ref{def:weak_para_soln}. From items \ref{i:1} and \ref{i:2}, upon multiplying by a test function $\phi$ and integrating by parts we have that
\begin{equation}\label{eq:weak_form_relax}
    \int_0^1 \int_{\T} \left(u_q\partial_t\phi + u_q \partial_{xxx}\phi + \sigma u_q^k \partial_x\phi\right)\, dx \, dt = \int_0^1 \int_{\T} E_q \partial_x\phi\, dx\, dt.
\end{equation}

We will analyze \eqref{eq:weak_form_relax} term by term. So let us start with the first term on the left hand side. Using that $u_q \to u$ in $C_t^0L^1_x$, we see
\begin{equation}\label{eq:time_deriv_term_conv}
    \int_0^1 \int_{\T} u_q \partial_t\phi\, dx\, dt \to \int_0^1 \int_{\T} u \partial_t\phi\, dx\, dt = \int_0^1 \left\langle u,\partial_t\phi \right\rangle_{H^{-s},H^s}\, dt.
\end{equation}
Using precisely the same reasoning we have that
\begin{equation}\label{eq:triple_deriv_term_conv}
    \int_0^1 \int_{\T} u_q \partial_{xxx}\phi\, dx\, dt \to \int_0^1 \int_{\T} u \partial_{xxx}\phi\, dx\, dt = \int_0^1 \left\langle u,\partial_{xxx}\phi \right\rangle_{H^{-s},H^s}\, dt.
\end{equation}
Now analyzing the remaining term on the left hand side of ~\eqref{eq:weak_form_relax}, first using item \ref{i:5} we see that upon expanding
$$
\mathbb{P}_{\not=0}(u_q^k) = \sum_{\xi_1 + \ldots + \xi_k \not=0} \hat{u}_q(t,\xi_1) \cdots \hat{u}_q(t,\xi_k) e^{2\pi i (\xi_1 + \ldots + \xi_k)x}
$$
the right hand side is an absolutely summable series in $C_t^0 \dot{H}_x^{-s}$ which is uniformly bounded in $q$. Hence each $\mathbb{P}_{\not=0}(u_q^k)$ is a well-defined element of $C_t^0 \dot{H}_x^{-s}$ in the sense of Definition \ref{def:paras}. Now since $u_q \to u$ in $C_t^0 L_x^1$, it follows that for fixed $\xi$ we have $\hat{u}_q(t,\xi) \to \hat{u}(t,\xi)$ in $C_{t}^0$. Thus it follows that
$$
\lim_{q \to \infty} \left\Vert \hat{u}_q(t,\xi_1) \hat{u}_q(t,\xi_2) \cdots \hat{u}_q(t,\xi_k)\right\Vert_{C^0_t} = \left\Vert \hat{u}(t,\xi_1) \hat{u}(t,\xi_2) \cdots \hat{u}(t,\xi_k)\right\Vert_{C^0_t}.
$$
Applying the Fatou lemma we have
$$
\sum_{\xi_1 + \ldots + \xi_k \not = 0} \left\Vert \hat{u}(t,\xi_1) \hat{u}(t,\xi_2) \cdots \hat{u}(t,\xi_k)\right\Vert_{C^0_t} |\xi_1 + \xi_2 + \ldots +\xi_k|^{-s} \lesssim \liminf_{q \to \infty}\left(C - 2^{-q-100}\right) = C.
$$
Thus $\mathbb{P}_{\not=0}(u^k)$ is a well defined element of $C_t^0\dot{H}^{-s}_x$. Finally we have
\begin{equation*}
    \begin{split}
        \Vert \mathbb{P}_{\not=0}(u^k) - \mathbb{P}_{\not=0}(u_q^k) \Vert_{C_t^0 \dot{H}_x^{-s}} &\lesssim \sum_{\xi_1 + \ldots + \xi_k \not=0} \Vert \hat{u}_q(t,\xi_1) \cdots \hat{u}_q(t,\xi_k) - \hat{u}(t,\xi_1)\cdots \hat{u}(t,\xi_k)\Vert_{C^0_t} |\xi_1 + \ldots + \xi_k|^{-s}.
    \end{split}
\end{equation*}
By the frequency support assertion in item~\ref{i:3}, $u_q$ is the truncation of $u$ to the frequency blocks up to scale $\lambda_q^{3/2}$. Hence the summand above vanishes whenever
\[
\max_i |\xi_i|\le \lambda_q^{3/2}.
\]
Therefore
$$
   \Vert \mathbb{P}_{\not=0}(u^k) - \mathbb{P}_{\not=0}(u_q^k) \Vert_{C_t^0 \dot{H}_x^{-s}} \lesssim \sum_{\substack{\xi_1+\ldots+\xi_k \not=0\\\max_i|\xi_i|>\lambda_q^{3/2}}} \left\|\hat u(t,\xi_1) \cdots \hat u(t,\xi_k)\right\|_{C_t^0}|\xi_1 +\cdots \xi_k|^{-s}
$$
The right-hand side is the tail of the absolutely convergent series defining $\mathbb P_{\neq0}(u^k)$, and hence tends to zero upon sending $q \to \infty$. Thus
$$
\lim_{q \to \infty} \mathbb{P}_{\not=0}(u_q^k) = \mathbb{P}_{\not=0}(u^k).
$$
Now using the uniform convergence in the time variable we have
\begin{equation}\label{eq:nonlin_term_conv}
    \begin{split}
        \int_0^1 \int_{\T} \sigma u_q^k \partial_{x}\phi\, dx\, dt &= \int_0^1 \int_{\T} \sigma \mathbb{P}_{\not=0}\left(u_q^k\right) \partial_{x}\phi\, dx\, dt\\
        &= \int_0^1 \sigma \left\langle \mathbb{P}_{\not=0}\left(u_q^k\right), \partial_{x}\phi\right\rangle_{\dot{H}^{-s},\dot{H}^s}\, dt\\
        &\to \int_0^1 \sigma \left\langle \mathbb{P}_{\not=0}\left(u^k\right), \partial_{x}\phi\right\rangle_{\dot{H}^{-s},\dot{H}^s}\, dt.
    \end{split}
\end{equation}
To analyze the final term involving the error, we utilize item \ref{i:4} and the Parseval theorem to see that
\begin{equation}\label{eq:error_term_conv}
\begin{split}
    \left|\int_0^1 \int_{\T} E_q \partial_x\phi\, dx\, dt\right| &= \left| \int_0^1 \sum_{\xi \not =0} \hat{E}_q(t,\xi) \overline{\widehat{\partial_x\phi}}(t,\xi)\, dt\right|\\
    &\lesssim \sum_{\xi\not=0} \left\Vert \hat{E}_q(t,\xi) \right\Vert_{C_t^0} \left\Vert \widehat{\partial_x\phi}(t,\xi)\right\Vert_{C_t^0}\\
    &\lesssim \left(\sup_{\xi \not=0} |\xi|^{s+1} \left\Vert \hat{\phi}(t,\xi)\right\Vert_{C_t^0}\right) \Vert E_q \Vert_{\dot{\mathbb{A}}^{-s}_{t,x}}\\
    &\lesssim 2^{-q-200}\\
    &\to 0
    \end{split}
\end{equation}
In ~\eqref{eq:error_term_conv} notice we have utilized that $\phi$ is smooth to conclude that
$$
\sup_{\xi \not=0} |\xi|^{s+1} \left\Vert \hat{\phi}(t,\xi)\right\Vert_{C_t^0} \lesssim 1.
$$
Thus upon sending $q \to \infty$ in ~\eqref{eq:weak_form_relax} and utilizing ~\eqref{eq:time_deriv_term_conv}, ~\eqref{eq:triple_deriv_term_conv}, ~\eqref{eq:nonlin_term_conv}, and ~\eqref{eq:error_term_conv} we have that
\begin{equation}\label{eq:solution}
    \int_0^1 \left(\left\langle u,\partial_t\phi \right\rangle_{H^{-s},H^s} + \left\langle u,\partial_{xxx}\phi \right\rangle_{H^{-s},H^s} + \sigma \left\langle \mathbb{P}_{\not=0}\left(u^k\right), \partial_{x}\phi\right\rangle_{\dot{H}^{-s},\dot{H}^s}\right)\, dt = 0.
\end{equation}
From ~\eqref{eq:solution} we see $u$ is our desired solution in the sense of Definition \ref{def:weak_para_soln}.

\end{proof}

\section{Proof of Inductive Proposition}\label{sec:5}

\subsection{Parameters}\label{subsec:param} Throughout the proceeding construction we will work with many different parameters, and we summarize them here. First we will take $\epsilon_q = 2^{-q}$ and $\gamma_q = 1 + \epsilon_q$. These parameters are fixed at every level of the iteration. Now assuming parameters at level $q$ are fixed, we will choose a large integer $A_{q+1}$. This $A_{q+1}$ will at a minimum satisfy the estimate
$$
A_{q+1} > 2\Vert E_q \Vert_{C_t^0 L_x^\infty}
$$
but it will be chosen to satisfy \eqref{eq:A_q+1_cond} and \eqref{eq:A_q+1_cond2}. $A_{q+1}$ will play no role before the proof of item \ref{i:5}, so for the proofs of items \ref{i:1}-\ref{i:4} one may regard $A_{q+1} \simeq 1$. Next we define a high frequency parameter $\lambda_{q+1}$ which will be an extremely large power $2$ such that $\lambda_{q+1}^{\epsilon_{q+1}/2}$ is also an integer. $\lambda_{q+1}$ is allowed to depend on the parameter $A_{q+1}$, and in particular, its size is determined by satisfying \eqref{eq:support_cond}, \eqref{eq:lambda-Lp-small}, \eqref{eq:sobolev_est_q_gtr_q_alpha}, \eqref{eq:lambda-Lone-small}, \eqref{eq:ED-decay}, \eqref{eq:EN-decay}, \eqref{eq:wiener_est_nonlinearity}, \eqref{eq:EO-decay}, \eqref{eq:ET-decay}, \eqref{eq:n=0_case}, \eqref{eq:n=1_case}, \eqref{eq:AF-case3-gap}, \eqref{eq:n=1_case_2}, \eqref{eq:page_29_est}, \eqref{eq:page29_est}, \eqref{eq:page_29_est_3}, \eqref{eq:page_32_conv_est}, \eqref{eq:mixed-prod-bound}, and \eqref{eq:pure-prod-small}. For convenience, we put $\mu_{q+1} = \lambda_{q+1}^{\epsilon_{q+1}}$ and $\nu_{q+1} = \lambda_{q+1}^{\gamma_{q+1}}$. So we have the relation $\nu_{q+1} = \lambda_{q+1}\mu_{q+1}$.

\subsection{Construction of Increment}

\begin{definition}[\textbf{Increment}]\label{def:wq+1}
    Put $\rho_{q+1} = \rho_{k,\lambda_{q+1},\epsilon_{q+1}}$ from Lemma \ref{lem:boldW} and let
    \begin{equation}\label{eq:wq+1}
        w_{q+1}(t, x) = \mathbb{P}_{\leq \mu_{q+1}^{1/2}} \left(a_{q}(t,x) \right) \mathbb{P}_{\leq \nu_{q+1}}\left(\rho_{q+1}(x)\right) h_{q+1}(t)
    \end{equation}
    where we put
    \begin{equation}\label{eq:ak}
        a_{q}(t,x) = \left(A_{q+1} - \sigma E_q(t,x)\right)^{1/k},
    \end{equation}
    and $h_{q+1}$ is a smooth function which is identically $1$ on
    $$
    \left(\frac{1}{2}- \sum_{q'\leq q} \frac{1}{\lambda_{q'}^\beta},\frac{1}{2} + \sum_{q' \leq q} \frac{1}{\lambda_{q'}^\beta}\right),
    $$
    $0$ outside
    $$
    \left(\frac{1}{2}- \sum_{q'\leq q+1} \frac{1}{\lambda_{q'}^\beta},\frac{1}{2} + \sum_{q' \leq q+1} \frac{1}{\lambda_{q'}^\beta}\right),
    $$
    and satisfies the estimates $0 \leq h_{q+1}(t) \leq 1$ and  $|h_{q+1}'(t)| \lesssim \lambda_{q+1}^\beta$.
\end{definition}

Throughout we will assume $u_q$ is a fixed function satisfying all of the hypotheses in Proposition \ref{prop:ind} and we define $u_{q+1}$ by~\eqref{eq:uq+1}. Our goal will be to show with these choices, $u_{q+1}$ satisfies all of the conditions in Proposition \ref{prop:ind}.

\begin{lemma}[\textbf{$C_t^0 L_x^p$ estimate for $w_{q+1}$}]\label{lem:wq+1_est}
    For $1 \leq p \leq \infty$, one has 
    \begin{equation*}
        \norm{w_{q+1}}_{C_t^0 L_x^p} \lesssim \lambda_{q+1}^{(1 - \epsilon_{q+1}) (\frac{1}{k} - \frac{1}{p})}
    \end{equation*}
\end{lemma}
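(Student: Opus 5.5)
The estimate follows from Hölder's inequality applied to the three factors of $w_{q+1}$ in \eqref{eq:wq+1}, with the implicit constant allowed to depend on $A_{q+1}$ and $E_q$ (both fixed at stage $q$). Since $0\le h_{q+1}\le 1$, for each fixed $t$
\[
\|w_{q+1}(t)\|_{L^p_x}\le \big\|\mathbb{P}_{\le \mu_{q+1}^{1/2}}a_q(t)\big\|_{L^\infty_x}\,\big\|\mathbb{P}_{\le\nu_{q+1}}\rho_{q+1}\big\|_{L^p_x},
\]
and we bound the two factors separately.

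\textbf{Amplitude.} Since $A_{q+1}>2\|E_q\|_{C^0_tL^\infty_x}$, the base $A_{q+1}-\sigma E_q$ lies in $[A_{q+1}/2,\,3A_{q+1}/2]$. Hence $a_q$ is smooth and $\|a_q\|_{C^0_tL^\infty_x}\lesssim A_{q+1}^{1/k}\lesssim 1$, where, as stated in Subsection \ref{subsec:param}, we regard $A_{q+1}\simeq1$. Lemma \ref{lem:proj} gives $L^\infty$ boundedness of $\mathbb{P}_{\le\mu_{q+1}^{1/2}}$ uniformly in the cutoff, so
\[
\big\|\mathbb{P}_{\le\mu_{q+1}^{1/2}}a_q\big\|_{C^0_tL^\infty_x}\lesssim 1,
\]
uniformly in $t$.

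\textbf{Slab.} Applying Lemma \ref{lem:proj} again on $L^p$, $1\le p\le\infty$, gives
\[
\big\|\mathbb{P}_{\le\nu_{q+1}}\rho_{q+1}\big\|_{L^p}\lesssim\|\rho_{q+1}\|_{L^p}.
\]
Lemma \ref{lem:boldW}\eqref{w:2}, with $\lambda=\lambda_{q+1}$ and $\epsilon=\epsilon_{q+1}$, bounds the right side by $\lambda_{q+1}^{(1-\epsilon_{q+1})(\frac1k-\frac1p)}$. The hypotheses of that lemma hold because $\lambda_{q+1}^{\epsilon_{q+1}}=\big(\lambda_{q+1}^{\epsilon_{q+1}/2}\big)^2$ is an integer.

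\textbf{Uniformity in $\epsilon_{q+1}$.} The main point to check is that the constant in Lemma \ref{lem:boldW}\eqref{w:2} is independent of $\epsilon$. We argue directly from \eqref{eq:rho}. For large $\lambda$ the translates $\phi_k(\lambda x+\lambda^{1-\epsilon}n)$ have disjoint supports, since the spacing $\lambda^{-\epsilon}$ exceeds the width $2\lambda^{-1}$. Thus $|\rho|^p$ is exactly a sum of $\lambda^\epsilon$ bumps on $\T$, and
\[
\|\rho\|_{L^p}^p=\lambda^\epsilon\cdot\lambda^{(1-\epsilon)p/k}\cdot\lambda^{-1}\|\phi_k\|_{L^p(\R)}^p .
\]
For $p=\infty$ we have $\|\rho\|_{L^\infty}=\lambda^{(1-\epsilon)/k}\|\phi_k\|_{L^\infty}$. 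In both cases the constant depends only on $\phi_k$, hence only on $k$ and $p$.

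Combining the amplitude and slab bounds and taking the supremum over $t$ gives the claimed estimate.
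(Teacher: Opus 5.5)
Your proposal is correct and follows the same route as the paper: bound the amplitude factor $\mathbb{P}_{\le \mu_{q+1}^{1/2}}a_q$ in $L^\infty$, use $0\le h_{q+1}\le 1$, and reduce to $\|\rho_{q+1}\|_{L^p}$ via Lemma~\ref{lem:boldW}. You also make explicit what the paper's one-line proof leaves implicit: the uniform $L^p$-boundedness of $\mathbb{P}_{\le\nu_{q+1}}$ from Lemma~\ref{lem:proj}, the dependence of the implicit constant on the stage-$q$ quantities $A_{q+1}$ and $E_q$, and the independence of the slab constant from $\epsilon_{q+1}$.
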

\begin{proof}
    We compute for any $t \in [0, 1]$ and use Lemma~\ref{lem:boldW}.
    \[\norm{w_{q+1}(t, \cdot)}_{ L_x^p}^p \lesssim \norm{\rho_{q+1}}_{L^p}^p |h_{q+1}(t)|^p \lesssim \norm{\rho_{q+1}}_{L^p}^p. \]
    Now taking the supremum in $t$ gives the result.
\end{proof}

\subsection{Proof of Item \ref{i:1}}
Let us suppose that $\lambda_0 < \lambda_1 < \ldots < \lambda_q$ are all chosen to be powers of $2$ and satisfy
$$
\sum_{q' \leq q} \frac{1}{\lambda_{q'}^\beta} \leq \frac{1}{4} - 2^{-q-100}.
$$
$\lambda_{q+1}$ is chosen to be a power of $2$ which satisfies the requirements in \ref{subsec:param} as well as
\begin{equation*}
    \lambda_{q+1} > \max\left(\lambda_q,2^{\frac{q+101}{\beta}}\right).
\end{equation*}
Then we see
$$
\sum_{q' \leq q+1} \frac{1}{\lambda_{q'}^\beta} = \sum_{q'\leq q} \frac{1}{\lambda_{q'}^\beta} + \frac{1}{\lambda_{q+1}^\beta} \leq \frac{1}{4} - 2^{-q-100} + 2^{-q-101} = \frac{1}{4} - 2^{-q-101}.
$$

\subsection{Proof of Item \ref{i:2}}

We argue by induction on $q$. The case $q=0$ is immediate from the initial construction. Assume item \ref{i:2} holds at stage $q$. We verify it for stage $q+1$.

Smoothness of $u_{q+1}$ follows directly from Definition~\ref{def:wq+1}: by the inductive hypothesis $E_q$ is smooth, and since $A_{q+1}$ is chosen so that $A_{q+1}-\sigma E_q(t,x)>0$ everywhere, the amplitude $a_q=(A_{q+1}- \sigma E_q)^{1/k}$ is smooth. The spatial Fourier cutoffs preserve smoothness; $\rho_{q+1}$ is smooth by Lemma~\ref{lem:boldW}; and $h_{q+1}$ is smooth by construction. Thus $w_{q+1}$, and therefore $u_{q+1}=u_q+w_{q+1}$, are smooth. Since $E_{q+1}$ is defined as the sum $E_O+E_N+E_D+E_T$, each term being obtained from $u_q$, $E_q$, and $w_{q+1}$ by multiplication and differentiation, $E_{q+1}$ is smooth as well.

We next prove that $w_{q+1}$ has zero spatial mean. By Definition~\ref{def:wq+1},
\[
w_{q+1} = \mathbb P_{\le \mu_{q+1}^{1/2}}(a_q(t,x))\mathbb P_{\le \nu_{q+1}}(\rho_{q+1}(x)) h_{q+1}(t).
\]
From Definition \ref{def:projs}, the first factor has spatial Fourier support in $\{|\xi|\le 2\mu_{q+1}^{1/2}\}$. By Lemma~\ref{lem:fourier}, the Fourier support of $\rho_{q+1}$ lies in $\lambda_{q+1}^{\epsilon_{q+1}}\mathbb Z$, and since $\rho_{q+1}$ has zero mean by Lemma~\ref{lem:boldW}, its zero Fourier mode vanishes. Therefore every nonzero Fourier mode of $\mathbb P_{\le \nu_{q+1}}(\rho_{q+1})$ has size at least $\lambda_{q+1}^{\epsilon_{q+1}}$. If the product had a zero Fourier mode, there would exist frequencies $\ell,m$ with
\[
|\ell|\le 2\lambda_{q+1}^{\frac{\epsilon_{q+1}}{2}},\qquad m\in \lambda_{q+1}^{\epsilon_{q+1}}\mathbb Z\setminus\{0\},\qquad \ell+m=0.
\]
But this is impossible, since $|m|\ge \lambda_{q+1}^{\epsilon_{q+1}}>2\lambda_{q+1}^{\frac{\epsilon_{q+1}}{2}}\ge |\ell|$. Hence
\[
\hat{w}_{q+1}(t,0)=0.
\]for every $t$. Since $u_q$ has zero mean by the inductive hypothesis, it follows that
\[
\hat{u}_{q+1}(t,0)=\hat{u}_q(t,0)+\hat{w}_{q+1}(t,0)=0.
\]
Thus $u_{q+1}$ has zero mean.

Now that $w_{q+1}$ is known to have zero mean, the mean-free antiderivative $\omega_{q+1}$ appearing in the definition of $E_T$ is well-defined.

We now verify the relaxed equation. Since $(u_q,E_q)$ solves \eqref{eq:relaxed}, we have
\[
\partial_t u_q+\partial_{xxx}u_q+\sigma\partial_x(u_q^k)=\partial_x E_q.
\]
Using $u_{q+1}=u_q+w_{q+1}$, we compute
\[
\partial_t u_{q+1}+\partial_{xxx}u_{q+1}+\sigma\partial_x(u_{q+1}^k) = \partial_x E_q+\partial_t w_{q+1}+\partial_{xxx}w_{q+1} +\sigma\partial_x\bigl((u_q+w_{q+1})^k-u_q^k\bigr).
\]
Expanding the nonlinear term by the binomial theorem and using the definitions of $E_O$, $E_N$, $E_D$, and $E_T$ in \eqref{eq:Rq+1}, together with $\partial_x\omega_{q+1}=w_{q+1}$, gives
\[
\partial_t u_{q+1}+\partial_{xxx}u_{q+1}+\sigma\partial_x(u_{q+1}^k)
= \partial_x(E_O+E_N+E_D+E_T) = \partial_x E_{q+1}.
\]
Hence $(u_{q+1},E_{q+1})$ solves the relaxed equation \eqref{eq:relaxed}.

It remains to verify the temporal support. By the inductive hypothesis,
\[
\operatorname{supp}_t u_q,\ \operatorname{supp}_t E_q \subset I_q :=
\left(\frac12-\sum_{q'\le q}\frac1{\lambda_{q'}^\beta}, \frac12+\sum_{q'\le q}\frac1{\lambda_{q'}^\beta}\right).
\]
By construction,
\[
\operatorname{supp}_t h_{q+1}\subset
I_{q+1} := \left(\frac12-\sum_{q'\le q+1}\frac1{\lambda_{q'}^\beta},\, \frac12+\sum_{q'\le q+1}\frac1{\lambda_{q'}^\beta}\right).
\]
Since $h_{q+1}$ is a factor in $w_{q+1}$, it follows immediately from Definition~\ref{def:wq+1} that
\[
\operatorname{supp}_t w_{q+1}\subset I_{q+1}.
\]
Therefore
\[
\operatorname{supp}_t u_{q+1}\subset I_{q+1}.
\]
Likewise, each of $E_O$, $E_N$, $E_D$, and $E_T$ is obtained from $u_q$, $E_q$, $w_{q+1}$ by multiplication, $x$-differentiation, $x$-anti-differentiation, or $t$-differentiation; none of these enlarges temporal support. Since $u_q$ and $E_q$ are supported in $I_q\subset I_{q+1}$ and $w_{q+1}$ is supported in $I_{q+1}$, each of $E_O$, $E_N$, $E_D$, and $E_T$ is supported in $I_{q+1}$. Hence
\[
\operatorname{supp}_t E_{q+1}\subset I_{q+1}.
\]

This proves item \ref{i:2}.

\subsection{Proof of Item \ref{i:3}}

We again argue by induction on $q$. The case $q=0$ was arranged in the initial construction. Assume item \ref{i:3} holds at stage $q$. We verify all assertions for stage $q+1$.

First from~\eqref{eq:wq+1} and Definition \ref{def:projs} we may see that the frequency support of $w_{q+1}$ lies in
$$
\left\{\xi : \mu_{q+1} - 2\mu_{q+1}^{1/2} \leq |\xi| \leq 2\left(\nu_{q+1} + \mu_{q+1}^{1/2}\right)\right\}.
$$
Upon choosing $\lambda_{q+1}$ large enough we may ensure that
\begin{equation}\label{eq:support_cond}
    2\lambda_q^{3/2} < \mu_{q+1} - 2\mu_{q+1}^{1/2} \quad \text{and} \quad 2\left(\nu_{q+1} + \mu_{q+1}^{1/2}\right) < \lambda_{q+1}^{3/2}
\end{equation}
since recall $\epsilon_{q+1} = 2^{-(q+1)}$, $\mu_{q+1} = \lambda_{q+1}^{\epsilon_{q+1}}$, and $\nu_{q+1} = \lambda_{q+1}^{1 + \epsilon_{q+1}}$. This establishes the first claim.

Recall from Lemma \ref{lem:wq+1_est} that
$$
\Vert w_{q+1} \Vert_{C_t^0 L^p_x} \lesssim \lambda_{q+1}^{(\epsilon_{q+1} - 1)\left(\frac{1}{p} - \frac{1}{k}\right)}.
$$
Since $p<k$, the exponent is negative. Hence, choosing $\lambda_{q+1}$ sufficiently large, we may ensure
\begin{equation}\label{eq:lambda-Lp-small}
    \|w_{q+1}\|_{C_t^0L_x^p}
    \lesssim_p
    2^{-C(p)(q+1)}
\end{equation}
for some $C(p) >0$ depending only on $p$. We next prove the Sobolev increment estimate when $k \ge 3$. Fix 
\[
    0<\alpha<\frac12-\frac1k.
\]
and set
\[
s_k := \frac12-\frac1k, \qquad \alpha_\ast:= \frac{\alpha +s_k}{2}.
\]
Thus
\[
\alpha < \alpha_\ast <s_k.
\]
Put
$$
q(\alpha) = \max \left\{q : \frac{1-\epsilon_q}{1 + \epsilon_q}s_k < \alpha_\ast \right\}.
$$
Recall we had put $\epsilon_q = 2^{-q}$, and so
$$
\frac{1 - \epsilon_q}{1 + \epsilon_q}s_k \to s_k
$$
in a monotonically increasing manner. Since $\alpha_* < \frac{1}{2} - \frac{1}{k}$, $q(\alpha)$ exists and is finite. Now we consider two cases. First when $q+1 \leq q(\alpha)$, then we have
\begin{equation}\label{eq:q_less_q_alpha}
    \Vert w_{q+1} \Vert_{C_t^0 \dot{H}_x^\alpha} < \left(2^{q(\alpha)+100} \max_{1 \leq q \leq q(\alpha)} \Vert w_q \Vert_{C_t^0 \dot{H}_x^\alpha}\right) 2^{-(q+1)}.
\end{equation}
When $q+1 > q(\alpha)$ then using a variant of Bernstein's inequality\footnote{A variant of the argument used in \cite[Lemma 5.8]{GR} suffices.} and the observation above that the maximum frequency of $w_{q+1}$ is $2(\nu_{q+1} + \mu_{q+1}^{1/2}) \simeq \nu_{q+1}$ we have
\begin{equation}\label{eq:q_gtr_q_alpha}
    \Vert w_{q+1} \Vert_{C_t^0 \dot{H}^\alpha_x} \lesssim \lambda_{q+1}^{(1 + \epsilon_{q+1})\alpha} \lambda_{q+1}^{-(1 - \epsilon_{q+1})s_k} \lesssim \lambda_{q+1}^{-\frac12(s_k -\alpha)}.
\end{equation}
Choosing
\begin{equation}\label{eq:sobolev_est_q_gtr_q_alpha}
    \lambda_{q+1} > 4^{q+1}
\end{equation}
and combining \eqref{eq:q_less_q_alpha} and \eqref{eq:q_gtr_q_alpha} we ensure
\begin{equation*}
    \|w_{q+1}\|_{C^0_t\dot{H}^\alpha_x} < \left(2^{q(\alpha)+100} \max_{1 \leq q \leq q(\alpha)} \Vert w_q \Vert_{C_t^0 \dot{H}_x^\alpha}\right) 2^{-(q+1)} + 2^{-(s_k-\alpha)(q+1)} \lesssim 2^{-(s_k-\alpha)(q+1)}
\end{equation*}
which establishes the second claim.

It remains to prove the $C_t^0L^1_x$ lower bound. From Lemma \ref{lem:wq+1_est} we have
\[
\|w_{q+1}\|_{C_t^0L_x^1}
\lesssim
\lambda_{q+1}^{(1-\epsilon_{q+1})\left(\frac1k-1\right)}.
\]
Since
\[
(1-\epsilon_{q+1})\left(\frac1k-1\right)<0,
\]
choosing $\lambda_{q+1}$ still larger if necessary, we ensure
\begin{equation}\label{eq:lambda-Lone-small}
    \|w_{q+1}\|_{C_t^0L_x^1}
    <
    \delta 2^{-q-1}.
\end{equation}
Using the reverse triangle inequality and the inductive hypothesis we have
\[
\begin{aligned}
\|u_{q+1}\|_{C_t^0L_x^1}
&\ge
\|u_q\|_{C_t^0L_x^1}
-
\|w_{q+1}\|_{C_t^0L_x^1} \\
&>
\delta(1+2^{-q})
-
\delta 2^{-q-1}\\
&=
\delta(1+2^{-q-1}).
\end{aligned}
\]
This is exactly the third inequality in item~\ref{i:3} at stage $q+1$.

\subsection{Proof of Item \ref{i:4}}

We argue by induction on $q$. $q = 0$ follows from initial construction. Assume item~\ref{i:4} holds at stage $q$, we verify for $q +1$. Recall $E_{q + 1} = E_O + E_N + E_D + E_T$, where these quantities were defined in \eqref{eq:Rq+1}, \eqref{eq:osc_error}, \eqref{eq:Nash_error}, \eqref{eq:dis_error}, \eqref{eq:temporal_error}.

\subsubsection*{Dispersion Error}
We compute
\begin{equation*}
    \norm{E_D}_{\dot{\mathbb{A}}_{x,t}^{-s}} = \sum_{\xi \neq 0}|\xi|^{-s} \norm{\hat{E}_D(\cdot,\xi)}_{C_t^0} \lesssim \sum_{\xi \neq 0} |\xi|^{2 - s} \norm{\hat{w}_{q+1}(\cdot, \xi)}_{C_t^0}
\end{equation*}
% In view of Definition \ref{def:wq+1} and the smooth frequency cutoff defined in Definition \ref{def:projs}, the first factor has frequency support in 
% \[\{|\ell| \leq 2\lambda_{q+1}^{\epsilon_{q+1}/2}\}\] On the other hand every nonzero Fourier mode of $\rho_{q+1}$ lies in 
% $\lambda_{q+1}^{\epsilon_{q+1}} \Z \setminus \{0\}$ according to Lemma~\ref{lem:fourier}. Using Lemma~\ref{lem:boldW} we know that $\int_\T \rho_{q+1} = 0$. Thus every nonzero mode $m$ of the second factor satisfies 
% \[|m| \geq \lambda_{q+1}^{\epsilon_{q+1}}\]
% Thus for $\xi = \ell  +m$ as Fourier mode of $w_{q+1}$
% \[|\xi| \geq |m| - |\ell| \geq \lambda_{q+1}^{\epsilon_{q+1}} - 2\lambda_{q+1}^{\epsilon_{q+1}/2}\] so for $\lambda_{q+1}$ sufficiently large, one has 
% \begin{equation}\label{eq:w-support-gap2}
%     \operatorname{supp}\hat{w}_{q+1}(t,\cdot)\subset \{|\xi|\gtrsim \lambda_{q+1}^{\epsilon_{q+1}}\}.
% \end{equation}
Combining this with 
$$
\|\hat{w}_{q+1}(\cdot,\xi)\|_{C_t^0}\le \|w_{q+1}\|_{C_t^0L_x^1},
$$
we obtain
\begin{align*}
   \norm{E_D}_{\dot{\mathbb{A}}_{x,t}^{-s}}
&\lesssim
\|w_{q+1}\|_{C_t^0L_x^1}
\sum_{\xi \not=0} |\xi|^{2-s}.
\end{align*}
For fixed $s >3$ the series converges and satisfies the estimate
\begin{align*}
  \norm{E_D}_{\dot{\mathbb{A}}_{x,t}^{-s}}
&\lesssim\|w_{q+1}\|_{C_t^0L_x^1}
\end{align*}
Using Lemma~\ref{lem:wq+1_est} we have
\[\|w_{q+1}\|_{C_t^0L_x^1} \lesssim \lambda_{q+1}^{(1 - \epsilon_{q+1})(\frac{1}{k} - 1)} \] 

Thus 
\begin{equation}\label{eq:ED-decay}
   \norm{E_D}_{\dot{\mathbb{A}}_{x,t}^{-s}}
    < 2^{-(q+1) - 200 - 2}
\end{equation}
upon choosing $\lambda_{q+1}$ sufficiently large.

\subsubsection*{Nash Error}
Recall 
\[  E_N = \sigma \sum_{n=1}^{k-1} \binom{k}{n} w_{q+1}^n u_q^{k-n}\]
so we estimate each summand in $\dot{\mathbb{A}}_{x,t}^{-s}$. 

Since for $s > 1$
\[
\begin{aligned}
    \|w_{q+1}^n u_q^{k-n}\|_{\dot{\mathbb A}_{x,t}^{-s}}
    &=
    \sum_{\xi\neq0} |\xi|^{-s} \left\|(w_{q+1}^n u_q^{k-n})^{\wedge}(\cdot,\xi)\right\|_{C_t^0} \\
    &\le
    \sum_{\xi\neq0} |\xi|^{-s} \|w_{q+1}^n u_q^{k-n}\|_{C_t^0L_x^1} \\
    &\lesssim_s
    \|w_{q+1}^n u_q^{k-n}\|_{C_t^0L_x^1}.
\end{aligned}
\]
it suffices to estimate the $L^1$-norm.

Now, for each $1 \leq n \leq k - 1$
\[
    \|w_{q+1}^n u_q^{k-n}\|_{C_t^0L_x^1}
    \le
    \|u_q\|_{C_t^0L_x^\infty}^{k-n}
    \|w_{q+1}\|_{C_t^0L_x^n}^n.
\]
Since $u_q$ is fixed at stage $q$, Lemma~\ref{lem:wq+1_est} gives 
\[
\|w_{q+1}^n u_q^{k-n}\|_{C_t^0L_x^1}
\lesssim_q
\|w_{q+1}\|_{C_t^0L_x^n}^n
\lesssim_q
\lambda_{q+1}^{(1-\epsilon_{q+1})\frac{n-k}{k}}.
\]
Summing over $n=1,\ldots,k-1$, we obtain
\[
\|E_N\|_{\dot{\mathbb A}_{x,t}^{-s}}
\lesssim_{q}
\lambda_{q+1}^{-\frac{1-\epsilon_{q+1}}{k}}.
\]
Thus by choosing $\lambda_{q+1}$ sufficiently large, the Nash error satisfies
\begin{equation}\label{eq:EN-decay}
    \|E_N\|_{\dot{\mathbb A}_{x,t}^{-s}} < 2^{-(q+1)-200-2}.
\end{equation}
\subsubsection*{Oscillation Error}

We start with expression of \eqref{def:wq+1}, and write
\begin{equation*}
    \begin{split}
        w_{q+1}^k &=\left( a_q(t,x)-\mathbb{P}_{>\mu_{q+1}^{1/2}}\left( a_q(t,x)\right)  \right)^k \left( \rho_{q+1}(x) -\mathbb{P}_{>\nu_{q+1}} (\rho_{q+1}(x)) \right)^k h_{q+1}^k(t)
        \\
        &= \left([a_q(t,x)]^k +Q_1(a_q,\lambda_{q+1}^{\epsilon_{q+1}/2}) \right) \left( \rho_{q+1}^k +Q_2(\rho_{q+1},\lambda^{1+\epsilon_{q+1}}_{q+1})\right)h_{q+1}^k(t)
    \end{split}
\end{equation*}
where 
\begin{equation*}
    Q_1(a_q,\lambda_{q+1}^{\epsilon_{q+1}/2})=\sum_{i=1}^k(-1)^i \binom{k}{i}  [a_q(t,x)]^{k-i} \left[ \mathbb{P}_{>\mu_{q+1}^{1/2}} (a_q(t, x))  \right]^i 
\end{equation*}

and 
\begin{equation*}
    Q_2(\rho_{q+1},\lambda_{q+1}^{1+\epsilon_{q+1}})=\sum_{i=1}^k(-1)^i \binom{k}{i}  [\rho_{q+1}(x)]^{k-i} \left[ \mathbb{P}_{>\nu_{q+1}} (\rho_{q+1}(x))  \right]^i .
\end{equation*}

We further rewrite 
\begin{equation*}
        w_{q+1}^k= a^k_q(t,x)\rho_{q+1}^k(x)h_{q+1}^k(t)+ \mathsf{R}_{q+1}
\end{equation*}
where 

\begin{equation*}
   \mathsf{R}_{q+1}= [\rho_{q+1}^k Q_1+a_q^kQ_2+Q_1Q_2 ]h_{q+1}^k.
\end{equation*}

Using $a_q(t,x):=(A_{q+1}-\sigma E_q(t,x))^{1/k}$ so that 
\[\sigma a_q^k = \sigma A_{q+1} - E_q\]
One may write 
\begin{align*}
    E_q + \sigma w_{q+1}^k &= E_q + (\sigma A_{q+1} - E_q)\rho_{q+1}^k(x) h_{q+1}^k(t) + \sigma \mathsf{R}_{q+1}
\end{align*}
Now decompose 
\begin{align*}
    \rho_{q+1}^k &= \P_{ = 0}(\rho_{q+1}^k) + \P_{\neq 0}(\rho_{q+1}^k) 
\end{align*}
Using the normalization as in Lemma~\ref{lem:boldW}
\[\P_{ = 0}(\rho_{q+1}^k) = \int_\T \rho_{q+1}^k dx = 1\]
and that $h_{q+1} \equiv 1$ on $\mathrm{supp}_t E_q$, one obtain 
\[E_q h_{q+1}^k = E_q\]
Therefore 
\begin{align*}
    E_q + (\sigma A_{q+1} - E_q)\rho_{q+1}^k(x) h_{q+1}^k(t) &= E_q + (\sigma A_{q+1} - E_q)(1 + \P_{\neq 0}(\rho_{q+1}^k)) h_{q+1}^k(t)\\
    &= \sigma A_{q+1} h_{q+1}^k + (\sigma A_{q+1} - E_q) \P_{\neq 0}(\rho_{q+1}^k) h_{q+1}^k
\end{align*}
The term $\sigma A_{q+1} h_{q+1}^k $ is independent of $x$ so it vanishes under the ${\dot{\mathbb{A}}_{x,t}^{-s}}$ norm.

Consequently 
\[\norm{E_O}_{\dot{\mathbb{A}}_{x,t}^{-s}} \leq \norm{(\sigma A_{q+1} - E_q) \P_{\neq 0}(\rho_{q+1}^k) h_{q+1}^k}_{\dot{\mathbb{A}}_{x,t}^{-s}} + \norm{\mathsf{R}_{q+1}}_{\dot{\mathbb{A}}_{x,t}^{-s}}\]

We first analyze the principal oscillatory term. Via Lemma~\ref{lem:pseudo_diff_cont_Wiener}
\begin{align*}
     \norm{(\sigma A_{q+1} - E_q) \P_{\neq 0}(\rho_{q+1}^k) h_{q+1}^k}_{\dot{\mathbb{A}}_{x,t}^{-s}} &\lesssim \norm{(\sigma A_{q+1} - E_q) h_{q+1}^k}_{\mathbb{A}_{x,t}^{s}} \norm{\P_{\neq 0}(\rho_{q+1}^k)}_{\dot{\A}^{-s}}
\end{align*}
Since $\sigma A_{q+1} - E_q$ depends only on $\lambda_{q+1}$-independent quantities, one obtains
\[ \norm{(\sigma A_{q+1} - E_q) h_{q+1}^k}_{\mathbb{A}_{x,t}^{s}}  \lesssim_q 1\]

Now in view of
 \eqref{eq:fourier_series rho k} we have
\begin{align*}
    \mathbb{P}_{ \neq 0}( \rho_{q+1}^k(x)) &= \sum_{n \neq 0} \hat{g}_k(\lambda_{q+1}^{\epsilon_{q+1} - 1} n) e^{2\pi i \lambda_{q+1}^{\epsilon_{q+1}} n x}
\end{align*}
for some $g_k$ as in Lemma~\ref{lem:fourier}. Now in the weighted Wiener norm one obtains
\begin{align*}
    \norm{\P_{\neq 0}(\rho_{q+1}^k(x))}_{\dot{\A}^{-s}} &= \sum_{n \neq 0} |\lambda_{q+1}^{\epsilon_{q+1}} n|^{-s} |\hat{g}_k (\lambda_{q+1}^{\epsilon_{q+1} -1} n)| = \lambda_{q+1}^{-s\epsilon_{q+1}} \sum_{n \neq 0} |n|^{-s} |\hat{g}_k (\lambda_{q+1}^{\epsilon_{q+1} -1} n)|
\end{align*}
Since $\hat{g}_k$ is Schwartz, then
\[|\hat{g}_k(\lambda_{q+1}^{\epsilon_{q+1} - 1} n)| \lesssim 1\]
Thus taking $s > 1$ so that $\sum_{n \neq 0} |n|^{-s}$ converges, 
we have
\begin{equation}\label{eq:wiener_est_nonlinearity}
    \norm{\P_{\neq 0}(\rho_{q+1}^k(x))}_{\dot{\A}^{-s}}\lesssim \lambda_{q+1}^{-s\epsilon_{q+1}} < 2^{-(q+1)-200 - 8}
\end{equation}
for any fixed $s > 1$, by choosing $\lambda_{q+1}$ sufficiently large.

Now it suffices to bound $\mathsf{R}_{q+1}$. Since the torus $\T$ has finite measure, for any $s > 1$
\begin{align*}
    \norm{\mathsf{R}_{q+1}}_{\dot{\mathbb{A}}_{x,t}^{-s}} &\lesssim  \norm{\mathsf{R}_{q+1}}_{C_t^0 L_x^\infty} \lesssim \norm{\rho_{q+1}^k Q_1}_{C_t^0 L_x^\infty}  + \norm{a_{q}^k Q_2}_{C_t^0 L_x^\infty} + \norm{Q_1 Q_2}_{C_t^0 L_x^\infty}
\end{align*}
It suffices to estimate the high-frequency tails in view of 
\begin{align*}
    \norm{Q_1}_{C_t^0 L_x^\infty} &\lesssim \sum_{i = 1}^k \norm{a_q}_{C_t^0 L_x^\infty}^{k - i} \norm{\P_{> \mu_{q+1}^{1/2}}(a_q)}_{C_t^0 L_x^\infty}^i \lesssim \norm{\P_{> \mu_{q+1}^{1/2}}(a_q)}_{C_t^0 L_x^\infty}\\
      \norm{Q_2}_{C_t^0 L_x^\infty} &\lesssim  \sum_{i = 1}^k \norm{\rho_{q+1}}_{ L_x^\infty}^{k - i} \norm{\P_{> \nu_{q+1}}(\rho_{q+1})}_{L_x^\infty}^i \lesssim \norm{\rho_{q+1}}_{ L_x^\infty}^{k - 1}\norm{\P_{> \nu_{q+1}}(\rho_{q+1})}_{L_x^\infty}\\
      &\lesssim \lambda_{q+1}^{(1 - \epsilon_{q+1}) \frac{k-1}{k}}\norm{\P_{> \nu_{q+1}}(\rho_{q+1})}_{L_x^\infty}
\end{align*}

For the tail frequencies, using Lemma \ref{lem:freq_proj_rho} for every integer $M \geq 1$
\begin{align*}
    \norm{\P_{> \mu_{q+1}^{1/2}}(a_q)}_{C_t^0 L_x^\infty} &\lesssim_M \lambda_{q+1}^{-M \epsilon_{q+1}/2} \norm{a_q}_{C_t^0 C_x^M} \lesssim  \lambda_{q+1}^{-M \epsilon_{q+1}/2}\\
     \norm{\P_{> \nu_{q+1}}(\rho_{q+1})}_{L_x^\infty}&\lesssim_M \lambda_{q+1}^{\frac{1-\epsilon_{q+1}}{k}-M \epsilon_{q+1}} 
\end{align*}
Now 
\begin{align*}
      \norm{\mathsf{R}_{q+1}}_{\dot{\mathbb{A}}_{x,t}^{-s}} &\lesssim \lambda_{q+1}^{1 - \epsilon_{q+1} - \frac{M\epsilon_{q+1}}{2}} + \lambda_{q+1}^{(1 - \epsilon_{q+1}) -M\epsilon_{q+1}} + \lambda_{q+1}^{(1- \epsilon_{q+1}) - \frac32 M \epsilon_{q+1}} 
\end{align*}
We may choose $M = M_{q+1}$ sufficiently large so that all the three exponents are negative. After this choice of $M$, we may choose $\lambda_{q+1}$ sufficiently large so that the total remainder is as small as required at stage $q+1$. Combining both estimates yields the desired estimate 
\begin{equation}\label{eq:EO-decay}
    \norm{E_O}_{\dot{\mathbb{A}}_{x,t}^{-s}}
    < 2^{-(q+1) - 200 - 2} .
\end{equation}

\subsubsection*{Temporal Error}

Recall 
\[E_T = \partial_t \omega_{q+1}\] where $\omega_{q+1}$ is the unique mean-free antiderivative of $w_{q+1}$, i.e.
\[\partial_x \omega_{q+1} = w_{q+1}, \qquad \hat{\omega}_{q+1}(t,0) = 0\]
And so
\begin{equation*}
    \hat{E}_T(t, \xi) = \frac{1}{2\pi i \xi} \partial_t \hat{w}_{q+1}(t, \xi).
\end{equation*}
Thus 
\begin{align}\label{eq:ET-startnew}
    \norm{E_T}_{\dot{\mathbb{A}}_{x,t}^{-s}} &= \sum_{\xi \neq 0}|\xi|^{-s} \norm{\hat{E}_T(\cdot, \xi)}_{C_t^0} \lesssim \sum_{\xi \neq 0} |\xi|^{-s - 1}\norm{\partial_t \hat{w}_{q+1}(\cdot, \xi)}_{C_t^0}
\end{align}
Now we have
\[
\|\partial_t\hat{w}_{q+1}(\cdot,\xi)\|_{C_t^0}
\le
\|\partial_t w_{q+1}\|_{C_t^0L_x^1},
\]
and so from \eqref{eq:ET-startnew} we obtain
\[
\|E_T\|_{\dot{\mathbb{A}}_{x,t}^{-s}}
\lesssim
\|\partial_t w_{q+1}\|_{C_t^0L_x^1}
\sum_{|\xi|\gtrsim \lambda_{q+1}^{\epsilon_{q+1}}} |\xi|^{-s-1}.
\]
For $s >0$ the series converges, thus
\begin{equation}
    \|E_T\|_{\dot{\mathbb{A}}_{x,t}^{-s}}
\lesssim  \|\partial_t w_{q+1}\|_{C_t^0L_x^1} \label{eq:ET-pre-dtwnew}
\end{equation}

It remains to estimate $\partial_t w_{q+1}$ in $C_t^0 L_x^1$. Differentiating in time gives 
\begin{align*}
    \partial_t w_{q+1}
&=
\mathbb P_{\le \mu_{q+1}}(\partial_t a_q)\,
\mathbb P_{\le \nu_{q+1}}(\rho_{q+1})\,
h_{q+1}
+
\mathbb P_{\le \mu_{q+1}^{1/2}}(a_q)\,
\mathbb P_{\le \nu_{q+1}}(\rho_{q+1})\,
h_{q+1}'.
\end{align*}

Using Lemma \ref{lem:proj}, H\"older's inequality, and the condition $0\le h_{q+1}\le 1$, we obtain
\[
\|\partial_t w_{q+1}\|_{C_t^0L_x^1}
\lesssim
\Bigl(
\|\partial_t a_q\|_{C_t^0L_x^\infty}
+
\|a_q\|_{C_t^0L_x^\infty}\| h_{q+1}'\|_{C_t^0}
\Bigr)
\|\rho_{q+1}\|_{L_x^1}.
\]
Now $\|\partial_t a_q\|_{C_t^0L_x^\infty}$ and $\|a_q\|_{C_t^0L_x^\infty}$ are $\lambda_{q+1}$-independent constants and so we may estimate them by $1$. By construction for the fixed parameter
\[
0<\beta<\frac{1}{16}\left(1-\frac{1}{k}\right)
\]
we have
\[
\|h_{q+1}'\|_{C_t^0}\lesssim \lambda^{\beta}_{q+1}.
\]

Finally, Lemma~\ref{lem:boldW} gives
\[
\|\rho_{q+1}\|_{L_x^1}
\lesssim
\lambda_{q+1}^{(1-\epsilon_{q+1})\left(\frac1k-1\right)}
=
\lambda_{q+1}^{-\frac{(1-\epsilon_{q+1})(k-1)}{k}}.
\]
Thus
\[
\|\partial_t w_{q+1}\|_{C_t^0L_x^1}
\lesssim_q
(1+\lambda^{\beta}_{q+1})
\lambda_{q+1}^{-\frac{(1-\epsilon_{q+1})(k-1)}{k}}
\lesssim_q
\lambda_{q+1}^{\beta-\frac{(1-\epsilon_{q+1})(k-1)}{k}}.
\]
Therefore substituting into \eqref{eq:ET-pre-dtwnew} yields
\begin{align*}
      \|E_T\|_{\dot{\mathbb{A}}_{x,t}^{-s}} &\lesssim \lambda_{q+1}^{\beta-\frac{(1-\epsilon_{q+1})(k-1)}{k}} 
\end{align*}
Since $\epsilon_{q+1} = 2^{-(q+1)}$ and $\beta < \frac{1}{16}\left(1-\frac{1}{k}\right)$, one may choose $\lambda_{q+1}$ large enough so that 
\begin{equation}\label{eq:ET-decay}
    \norm{E_T}_{\dot{\mathbb{A}}_{x,t}^{-s}}
    <
   2^{-(q+1) - 200 - 2}.
\end{equation}

Finally combining~\eqref{eq:ED-decay},~\eqref{eq:EN-decay},~\eqref{eq:EO-decay}, and~\eqref{eq:ET-decay} we deduce
$$
\Vert E_{q+1} \Vert_{\dot{\mathbb{A}}_{x,t}^{-s}} < 2^{-(q+1) - 200}
$$
which closes the induction.

\subsection{Proof of Item \ref{i:5}}\label{subsec:i5}

We suppose that item \ref{i:5} holds at level $q$ and aim to show this holds for level $q+1$ as well. Using that $u_{q+1} = w_{q+1} + u_q$ we have the expansion
\begin{equation}\label{eq:abs_sum_expansion}
    \begin{split}
        & \sum_{\xi_1 + \ldots + \xi_k \not = 0} \left\Vert  \hat{u}_{q+1}(t,\xi_1) \hat{u}_{q+1}(t,\xi_2) \cdots \hat{u}_{q+1}(t,\xi_k) \right\Vert_{C_t^0}|\xi_1 + \ldots + \xi_k|^{-s}\\
        &\leq \sum_{\xi_1 + \ldots + \xi_k \not = 0} \left\Vert  \hat{u}_{q}(t,\xi_1) \hat{u}_{q}(t,\xi_2) \cdots \hat{u}_{q}(t,\xi_k) \right\Vert_{C_t^0}|\xi_1 + \ldots + \xi_k|^{-s}\\
        &+ \sum_{m=1}^{k-1} \binom{k}{m} \sum_{\xi_1 + \ldots + \xi_k \not=0} \left\Vert \prod_{j=1}^m \hat{w}_{q+1}(t,\xi_j) \prod_{j=m+1}^k \hat{u}_q(t,\xi_j)\right\Vert_{C_t^0} |\xi_1 + \ldots + \xi_k|^{-s}\\
        &+ \sum_{\xi_1 + \ldots + \xi_k \not=0} \left\Vert \hat{w}_{q+1}(t,\xi_1) \hat{w}_{q+1}(t,\xi_2) \cdots \hat{w}_{q+1}(t,\xi_k)\right\Vert_{C_t^0} |\xi_1 + \ldots + \xi_k|^{-s}.
    \end{split}
\end{equation}

The term on the second line of ~\eqref{eq:abs_sum_expansion} is less than $C - 2^{-q-100}$ by hypothesis. To analyze the final two lines, we introduce the auxiliary quantities
\begin{equation*}
    AF(f,j) = \sum_{\xi_1 + \ldots+ \xi_j \not=0} \Vert \hat{f}(t,\xi_1) \cdots \hat{f}(t,\xi_j)\Vert_{C_t^0} |\xi_1 + \ldots + \xi_j|^{-s}
\end{equation*}
and
\begin{equation*}
    ZF(f,j) = \sum_{\xi_1 + \ldots+ \xi_j =0} \Vert \hat{f}(t,\xi_1) \cdots \hat{f}(t,\xi_j)\Vert_{C_t^0}.
\end{equation*}

Our first task will be to obtain bounds on $AF(w_{q+1},j)$ and $ZF(w_{q+1},j)$ for $1 \leq j \leq k$ since it is clear that
$$
AF(u_q,j), ZF(u_q,j) \lesssim 1.
$$
First from ~\eqref{eq:ak} we have the Taylor expansion
\begin{equation*}
    \mathbb{P}_{\leq \mu_{q+1}^{1/2}}\left( a_q(t,x) \right) = \sum_{n=0}^\infty \binom{1/k}{n} A_{q+1}^{1/k - n} \sigma^n \mathbb{P}_{\leq \mu_{q+1}^{1/2}} \left(E_q^n\right)
\end{equation*}
and from Lemma \ref{lem:fourier} we obtain
\begin{equation*}
    \mathbb{P}_{\leq \nu_{q+1}}\left(\rho_{q+1}\right) = \sum_{\xi \in \Z} \lambda_{q+1}^{(1-\epsilon_{q+1})\left(\frac{1}{k}-1\right)} \hat{\phi}_k\left(\lambda_{q+1}^{\epsilon_{q+1}-1}\xi\right) m_{\leq \mu_{q+1}}(\xi) e^{2\pi i \lambda_{q+1}^{\epsilon_{q+1}} \xi x}r
\end{equation*}
(recall $m_{\leq \mu_{q+1}}$ from Definition \ref{def:projs}) thus we have
\begin{equation*}
\begin{split}
    \hat{w}_{q+1}(t,\eta) &= \sum_{n=0}^\infty \sum_{\xi \in \Z} \lambda_{q+1}^{(1-\epsilon_{q+1})\left(\frac{1}{k}-1\right)} \hat{\phi}_k\left(\lambda_{q+1}^{\epsilon_{q+1}-1}\xi\right) \binom{1/k}{n} A_{q+1}^{1/k-n} \\
    &\times \sigma^n  \left(E_q^n h_{q+1}\right)^\wedge(t,\eta - \lambda_{q+1}^{\epsilon_{q+1}} \xi) m_{\leq \mu_{q+1}}(\xi)m_{\leq \mu_{q+1}^{1/2}}(\eta-\lambda_{q+1}^{\epsilon_{q+1}}\xi).
    \end{split}
\end{equation*}

With this, we may proceed with the following lemmas

\begin{lemma}\label{lem:horrible1}
    We have that
    \[
\begin{aligned}
AF(w_{q+1},j)
\lesssim_{q,j,s}\;&
\lambda_{q+1}^{(1-\epsilon_{q+1})(\frac jk-1)-s\epsilon_{q+1}}
A_{q+1}^{j/k}
\\
&+
\lambda_{q+1}^{(1-\epsilon_{q+1})(\frac jk-1)}
A_{q+1}^{\frac jk-1}
\|E_q\|_{\dot{\mathbb \A}^{-s}_{x, t}}
\\
&+
\lambda_{q+1}^{(1-\epsilon_{q+1})(\frac jk-1)-s\epsilon_{q+1}}
A_{q+1}^{\frac jk-1}
\\
&+
\lambda_{q+1}^{(1-\epsilon_{q+1})(\frac jk-1)}
A_{q+1}^{\frac jk-2}.
\end{aligned}
\]
In particular,
\[
AF(w_{q+1},j)
\lesssim_{q,j,s}
\lambda_{q+1}^{(1-\epsilon_{q+1})(\frac jk-1)}
A_{q+1}^{j/k}.
\]

\end{lemma}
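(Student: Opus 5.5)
We will estimate $AF(w_{q+1},j)$ directly from the Fourier expansion of $\hat w_{q+1}$ displayed just above. Write $w_{q+1}=b_q\,\tilde\rho\,h_{q+1}$ with $b_q=\mathbb P_{\le\mu_{q+1}^{1/2}}(a_q)$ and $\tilde\rho=\mathbb P_{\le\nu_{q+1}}(\rho_{q+1})$. Each frequency of $w_{q+1}$ is then $\eta=\ell+\lambda_{q+1}^{\epsilon_{q+1}}n$, where $|\ell|\le2\mu_{q+1}^{1/2}$ and $n\neq0$. For a $j$-tuple $(\eta_1,\dots,\eta_j)$ we write the total frequency as $L+\mu_{q+1}N$, with $L=\sum\ell_i$ and $N=\sum n_i$. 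Because $|L|\le 2j\mu_{q+1}^{1/2}\ll\mu_{q+1}$, we can split the sum defining $AF$ into two pieces:
\begin{itemize}
\item the resonant part $N=0$, where the total frequency equals $L\neq0$ and is small;
\item the non-resonant part $N\neq0$, where $|L+\mu_{q+1}N|\gtrsim\mu_{q+1}|N|$.
\end{itemize}

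For the slab factor we use the bound $\sum_{n_1,\dots,n_j}\prod_i|\hat\phi_k(\lambda_{q+1}^{\epsilon_{q+1}-1}n_i)|$ times $\lambda_{q+1}^{j(1-\epsilon_{q+1})(\frac1k-1)}$. Since $\hat\phi_k$ is Schwartz, each sum over $n_i$ is $\lesssim\lambda_{q+1}^{1-\epsilon_{q+1}}$. The total slab weight is therefore
\[
\lambda_{q+1}^{j(1-\epsilon_{q+1})(\frac1k-1)+j(1-\epsilon_{q+1})}=\lambda_{q+1}^{(1-\epsilon_{q+1})\frac jk}.
\]
In the resonant case the constraint $N=0$ removes one free summation. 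This saves a factor $\lambda_{q+1}^{1-\epsilon_{q+1}}$ and produces exactly the prefactor $\lambda_{q+1}^{(1-\epsilon_{q+1})(\frac jk-1)}$. In the non-resonant case the weight $|L+\mu_{q+1}N|^{-s}\lesssim\mu_{q+1}^{-s}|N|^{-s}$ is summable over $N\neq0$ (as $s>1$). We again fix $N$ to save $\lambda_{q+1}^{1-\epsilon_{q+1}}$, and we additionally gain $\lambda_{q+1}^{-s\epsilon_{q+1}}$.

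For the amplitude factor we use the Taylor expansion $a_q=\sum_n\binom{1/k}{n}A_{q+1}^{1/k-n}\sigma^nE_q^n$, and we multiply out the $j$ factors. We separate the term where every factor is the constant $A_{q+1}^{1/k}h_{q+1}$ from the terms with at least one $E_q$.
\begin{itemize}
\item The all-constant term has $\ell_i=0$ for every $i$. It therefore contributes only in the non-resonant regime, with size $A_{q+1}^{j/k}\lambda_{q+1}^{(1-\epsilon_{q+1})(\frac jk-1)-s\epsilon_{q+1}}$. This is the first line of the bound.
\item The terms linear in $E_q$ carry the factor $A_{q+1}^{\frac jk-1}$. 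In the resonant case the total frequency is $L=\xi$ with $\xi$ a frequency of $E_q$ (times a smooth $h_{q+1}$ factor), and we bound $\sum_{\xi\neq0}|\xi|^{-s}\|\hat E_q\|_{C_t^0}$ by $\|E_q\|_{\dot{\mathbb A}^{-s}_{x,t}}$. Here Lemma \ref{lem:pseudo_diff_cont_Wiener} absorbs the $h_{q+1}^j$ factor and the projections, since the multipliers $m\le1$. Any $\ell$ with $L=0$ drops out of $AF$. This gives the second line, and the non-resonant linear piece gives the third line.
\item The terms of order $\ge2$ in $E_q$ are bounded crudely in $\ell^1$. Using $\|E_q^n\|_{\A^0}\le\|E_q\|_{\A^0}^n$, the geometric series converges because $A_{q+1}>2\|E_q\|$ is taken large relative to the Wiener norm of $E_q$, which we may arrange. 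This yields the fourth line with the factor $A_{q+1}^{\frac jk-2}$, at the resonant size.
\end{itemize}
The ``in particular'' claim then follows since $A_{q+1}\ge1$ and each line is dominated by $\lambda_{q+1}^{(1-\epsilon_{q+1})(\frac jk-1)}A_{q+1}^{j/k}$.

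The main obstacle is the combinatorial bookkeeping of the resonant set $N=0$. We must check carefully that the constraint genuinely saves a full factor $\lambda_{q+1}^{1-\epsilon_{q+1}}$ uniformly, which requires the Schwartz decay of $\hat\phi_k$ to make the convolution $\sum_{n_1+\dots+n_j=0}\prod|\hat\phi_k(\lambda^{\epsilon-1}n_i)|\lesssim\lambda^{(j-1)(1-\epsilon)}$. We must also check that $C_t^0$ norms of products of time-dependent coefficients are handled by submultiplicativity. The cases $j=1$ (where the resonant part is empty, since $w_{q+1}$ has zero mean) and the interaction of the $m_{\le}$ cutoffs are handled by the bounds $0\le m\le1$.
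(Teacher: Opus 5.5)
Your proposal is correct and follows the paper's proof. The paper likewise expands $\hat w_{q+1}$ via the binomial series for $a_q$ and the Poisson-summation Fourier series of $\rho_{q+1}$, and then treats four pieces:
\begin{itemize}
\item the all-constant term, which is non-resonant only;
\item the terms linear in $E_q$ with $\xi_1+\cdots+\xi_j=0$, giving the $\|E_q\|_{\dot{\mathbb A}^{-s}_{x,t}}$ line;
\item the linear terms with $\xi_1+\cdots+\xi_j\neq0$, which gain $\lambda_{q+1}^{-s\epsilon_{q+1}}$;
\item the terms of order at least two, controlled by $C_q=\|E_q\|_{\mathbb A^0_{x,t}}$ with $C_q/A_{q+1}\le\frac12$.
\end{itemize}
Throughout it uses the same discrete convolution bound $\lesssim\lambda_{q+1}^{(j-1)(1-\epsilon_{q+1})}$ to save one summation. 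The differences are cosmetic: you split by resonance before Taylor order. Also, invoking Lemma \ref{lem:pseudo_diff_cont_Wiener} for the $h_{q+1}$ factors and the cutoffs is unnecessary, since $0\le h_{q+1}\le 1$ and $0\le m\le 1$ already bound them directly.
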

\begin{proof}
    Clearly
    \begin{equation*}
        \begin{split}
            &\Vert \hat{w}_{q+1}(t,\eta_1) \cdots \hat{w}_{q+1}(t,\eta_j) \Vert_{C_t^0}\\
            &\lesssim \sum_{n_1,\ldots,n_j = 0}^\infty \sum_{\xi_1, \ldots,\xi_j \in \Z} \lambda_{q+1}^{j(1-\epsilon_{q+1})\left(\frac{1}{k}-1\right)} \left|\hat{\phi}_k\left(\lambda_{q+1}^{\epsilon_{q+1}-1}\xi_1\right) \cdots \hat{\phi}_k\left(\lambda_{q+1}^{\epsilon_{q+1}-1}\xi_j\right)\right| \left|\binom{1/k}{n_1} \cdots \binom{1/k}{n_j}\right| A_{q+1}^{\frac{j}{k}-n_1 -\ldots - n_j}\\
            &\times \left\Vert \left(E_q^{n_1} h_{q+1}\right)^{\wedge}(t,\eta_1 - \lambda_{q+1}^{\epsilon_{q+1}} \xi_1) \right\Vert_{C_t^0} \cdots \left\Vert \left(E_q^{n_1} h_{q+1}\right)^{\wedge}(t,\eta_1 - \lambda_{q+1}^{\epsilon_{q+1}} \xi_j) \right\Vert_{C_t^0}
        \end{split}
    \end{equation*}
    There are four cases we consider.

    \textbf{Case 1:} $n_1 = n_2 = \ldots =n_j = 0$.\\
    In this case we have that
    $$
    \left(E_q^0 h_{q+1}\right)^{\wedge}(t,\eta_i - \lambda_{q+1}^{\epsilon_{q+1}} \xi_i) = \delta(\eta_i - \lambda_{q+1}^{\epsilon_{q+1}} \xi_i) h_{q+1}(t)
    $$
    and thus
    $$
    \left\Vert \left(E_q^{0} h_{q+1}\right)^{\wedge}(t,\eta_i - \lambda_{q+1}^{\epsilon_{q+1}} \xi_i) \right\Vert_{C_t^0} = \left\Vert \delta(\eta_i - \lambda_{q+1}^{\epsilon_{q+1}} \xi_i) h_{q+1}(t) \right\Vert_{C_t^0} = \delta(\eta_i - \lambda_{q+1}^{\epsilon_{q+1}} \xi_i)
    $$
    for all $1 \leq i \leq j$. Hence
    \begin{equation}\label{eq:n=0_case}
        \begin{split}
            &\sum_{\eta_1 + \ldots + \eta_j \not=0} \sum_{\xi_1,\ldots,\xi_j \in \Z} \lambda_{q+1}^{j(1-\epsilon_{q+1})\left(\frac{1}{k} - 1\right)} \left|\hat{\phi}_k\left(\lambda_{q+1}^{\epsilon_{q+1}-1}\xi_1\right) \cdots \hat{\phi}_k\left(\lambda_{q+1}^{\epsilon_{q+1}-1}\xi_j\right)\right| A_{q+1}^{j/k}\\
            &\times \delta(\eta_1 - \lambda_{q+1}^{\epsilon_{q+1}} \xi_1) \cdots \delta(\eta_j - \lambda_{q+1}^{\epsilon_{q+1}} \xi_j) |\eta_1 + \ldots + \eta_j|^{-s}\\
            &= \sum_{\xi_1 + \ldots + \xi_j \not=0} \lambda_{q+1}^{j(1-\epsilon_{q+1})\left(\frac{1}{k}-1\right) - s\epsilon_{q+1}} \left| \hat{\phi}_k\left(\lambda_{q+1}^{\epsilon_{q+1}-1}\xi_1\right) \cdots \hat{\phi}_k\left(\lambda_{q+1}^{\epsilon_{q+1}-1}\xi_j\right)\right| A_{q+1}^{j/k} |\xi_1 + \ldots + \xi_j|^{-s}\\
            &= \lambda_{q+1}^{(1-\epsilon_{q+1})\left(\frac{j}{k}-1\right)-s\epsilon_{q+1}} A_{q+1}^{j/k} \sum_{\Xi \not=0} |\Xi|^{-s} \sum_{\xi_1,\ldots,\xi_{j-1}} \left|\prod_{i=1}^{j-1}\hat{\phi}_k\left(\lambda_{q+1}^{\epsilon_{q+1}-1}\xi_i\right) \hat{\phi}_k\left(\lambda_{q+1}^{\epsilon_{q+1}-1}\left(\Xi - \xi_1 - \ldots - \xi_{j-1}\right)\right)\right|\\
            &\times \lambda_{q+1}^{(j-1)(\epsilon_{q+1}-1)}\\
            &\lesssim \lambda_{q+1}^{(1-\epsilon_{q+1})\left(\frac{j}{k}-1\right)-s\epsilon_{q+1}} A_{q+1}^{j/k} \sum_{\Xi \not=0} |\Xi|^{-s} \int_{\R^{j-1}} \left| \prod_{i=1}^{j-1} \hat{\phi}_k(\xi_i) \hat{\phi}_k(\lambda_{q+1}^{\ep -1}\Xi - \xi_1 - \ldots - \xi_{j-1})\right|\, d\xi_1\, \ldots\, d\xi_{j-1}\\
            &\lesssim \lambda_{q+1}^{(1-
            \epsilon_{q+1})\left(\frac{j}{k}-1\right)-s\epsilon_{q+1}} A_{q+1}^{j/k}
        \end{split}
    \end{equation}
    where we have utilized that $s > 1$, our choice of taking $\lambda_{q+1}$ arbitrarily large, and the generalized Young's convolution inequality to get that
    $$
    \left\Vert \left(|\hat{\phi}_k| \ast |\hat{\phi}_k| \ast \ldots \ast |\hat{\phi}_k|\right)\left(\lambda_{q+1}^{\epsilon_{q+1}-1} \cdot\right)\right\Vert_{L^\infty} \lesssim \left\Vert \hat{\phi}_k\right\Vert_{L^1}^{j-1} \left\Vert \hat{\phi}_k \right\Vert_{L^\infty} \lesssim 1.
    $$

    \textbf{Case 2:} $n_i = 1$ for some $1 \leq i \leq j$, $n_m=0$ for all $m \not=i$, $\xi_1 + \ldots + \xi_j = 0$.

    By symmetry considerations let us suppose that $i = 1$; i.e. $n_1 = 1$ and $n_2=\ldots =n_j =0$. Then we have that
    $$
    \left\Vert \left(E_qh_{q+1}\right)^{\wedge}(t,\eta_1 - \lambda_{q+1}^{\epsilon_{q+1}} \xi_1)\right\Vert_{C_t^0} = \left\Vert \hat{E}_q(t,\eta_1 - \lambda_{q+1}^{\epsilon_{q+1}} \xi_1)\right\Vert_{C_t^0}
    $$
   as well as
   $$
   \left\Vert \left(E_q^{0} h_{q+1}\right)^{\wedge}(t,\eta_i - \lambda_{q+1}^{\epsilon_{q+1}} \xi_i) \right\Vert_{C_t^0} = \delta(\eta_i - \lambda_{q+1}^{\epsilon_{q+1}} \xi_i)
   $$
   for all $1 < i \leq j$. Hence, using the same convolution estimate from Case 1, we have
   \begin{equation}\label{eq:n=1_case}
       \begin{split}
           &\sum_{\eta_1 + \ldots + \eta_j \not=0} \sum_{\xi_1 + \ldots + \xi_j=0} \lambda_{q+1}^{j(1-\epsilon_{q+1})\left(\frac{1}{k} - 1\right)} \left|\hat{\phi}_k\left(\lambda_{q+1}^{\epsilon_{q+1}-1}\xi_1\right) \cdots \hat{\phi}_k\left(\lambda_{q+1}^{\epsilon_{q+1}-1}\xi_j\right)\right| A_{q+1}^{\frac{j}{k}-1}\\
           &\times \left\Vert \hat{E}_q(t,\eta_1 - \lambda_{q+1}^{\epsilon_{q+1}} \xi_1)\right\Vert_{C_t^0}\delta(\eta_2 - \lambda_{q+1}^{\epsilon_{q+1}} \xi_2) \cdots \delta(\eta_j - \lambda_{q+1}^{\epsilon_{q+1}} \xi_j) |\eta_1 + \ldots + \eta_j|^{-s}\\
           &= \sum_{\xi_1 + \ldots + \xi_j = 0}  \sum_{\eta_1 \not= \lambda_{q+1}^{\epsilon_{q+1}} \xi_1} \lambda_{q+1}^{j(1-\epsilon_{q+1})\left(\frac{1}{k}-1\right)}A_{q+1}^{\frac{j}{k}-1} \left|\hat{\phi}_k\left(\lambda_{q+1}^{\epsilon_{q+1}-1}\xi_1\right) \cdots \hat{\phi}_k\left(\lambda_{q+1}^{\epsilon_{q+1}-1}\xi_j\right)\right|\\
           &\times \left\Vert \hat{E}_q(t,\eta_1 - \lambda_{q+1}^{\epsilon_{q+1}} \xi_1)\right\Vert_{C_t^0} \left|\eta_1 -\lambda_{q+1}^{\epsilon_{q+1}} \xi_1\right|^{-s}\\
           &= \sum_{\xi_1 + \ldots + \xi_j = 0} \lambda_{q+1}^{j(1-\epsilon_{q+1})\left(\frac{1}{k}-1\right)} A_{q+1}^{\frac{j}{k}-1} \left|\hat{\phi}_k\left(\lambda_{q+1}^{\epsilon_{q+1}-1}\xi_1\right) \cdots \hat{\phi}_k\left(\lambda_{q+1}^{\epsilon_{q+1}-1}\xi_j\right)\right| \Vert E_q \Vert_{\dot{\mathbb{A}}_{x,t}^{-s}}\\
           &\lesssim \lambda_{q+1}^{(1-\epsilon_{q+1})\left(\frac{j}{k}-1\right)} A_{q+1}^{\frac{j}{k}-1} \Vert E_q \Vert_{\dot{\mathbb{A}}_{x,t}^{-s}}.
       \end{split}
   \end{equation}
    
    \textbf{Case 3:} $n_i = 1$ for some $1 \leq i \leq j$, $n_m=0$ for all $m \not=i$, $\xi_1 + \ldots + \xi_j \not= 0$.

    Again from symmetry considerations we assume that $n_1 = 1$ and $n_2 = \ldots = n_j = 0$. From item \ref{i:3} and the fact $(u_q,E_q)$ solve \eqref{eq:relaxed}, the frequency support of $E_q$ is contained in $B(0,k\lambda_q^{3/2})$. Hence we have that
    $$
    |\eta_1 - \lambda_{q+1}^{\epsilon_{q+1}} \xi_1| \lesssim \lambda_q^{3/2}.
    $$
    Since $\eta_i = \lambda_{q+1}^{\epsilon_{q+1}} \xi_i$ for all $2 \leq i \leq j$ then we see that we may choose $\lambda_{q+1}$ large enough to ensure that
    \begin{equation}\label{eq:AF-case3-gap}
        \begin{split}
            |\eta_1 + \eta_2 + \ldots + \eta_j| &= \left|\eta_1 - \lambda_{q+1}^{\epsilon_{q+1}} \xi_1 + \lambda_{q+1}^{\epsilon_{q+1}}\left(\xi_1 + \xi_2 + \ldots + \xi_j\right)\right|\\
            &\geq \lambda_{q+1}^{\epsilon_{q+1}} \left|\xi_1 + \ldots + \xi_j\right| - |\eta_1 - \lambda_{q+1}^{\epsilon_{q+1}} \xi_1|\\
            &\geq \frac{1}{2}\lambda_{q+1}^{\epsilon_{q+1}} \left|\xi_1 + \ldots + \xi_j\right|\\
            &\simeq \lambda_{q+1}^{\epsilon_{q+1}} \left|\xi_1 + \ldots + \xi_j\right|
        \end{split}
    \end{equation}
    Hence again applying the same convolution estimate we have
    \begin{equation}\label{eq:n=1_case_2}
        \begin{split}
            &\sum_{\eta_1 + \ldots + \eta_j \not=0} \sum_{\xi_1 + \ldots + \xi_j \not= 0} \lambda_{q+1}^{j(1-\epsilon_{q+1})\left(\frac{1}{k} - 1\right)} \left|\hat{\phi}_k\left(\lambda_{q+1}^{\epsilon_{q+1}-1}\xi_1\right) \cdots \hat{\phi}_k\left(\lambda_{q+1}^{\epsilon_{q+1}-1}\xi_j\right)\right| A_{q+1}^{\frac{j}{k}-1}\\
           &\times \left\Vert \hat{E}_q(t,\eta_1 - \lambda_{q+1}^{\epsilon_{q+1}} \xi_1)\right\Vert_{C_t^0}\delta(\eta_2 - \lambda_{q+1}^{\epsilon_{q+1}} \xi_2) \cdots \delta(\eta_j - \lambda_{q+1}^{\epsilon_{q+1}} \xi_j) |\eta_1 + \ldots + \eta_j|^{-s}\\
           &\lesssim  \sum_{\xi_1 + \ldots + \xi_j \not=0} \sum_{\eta_1 \not= \lambda_{q+1}^{\epsilon_{q+1}} \xi_1} \lambda_{q+1}^{j(1-\epsilon_{q+1})\left(\frac{1}{k}-1\right)-s\epsilon_{q+1}} A_{q+1}^{\frac{j}{k}-1} \left|\hat{\phi}_k\left(\lambda_{q+1}^{\epsilon_{q+1}-1}\xi_1\right) \cdots \hat{\phi}_k\left(\lambda_{q+1}^{\epsilon_{q+1}-1}\xi_j\right)\right|\\
           &\times \left\Vert \hat{E}_q(t,\eta_1 - \lambda_{q+1}^{\epsilon_{q+1}} \xi_1) \right\Vert_{C_t^0} |\xi_1 + \ldots + \xi_j|^{-s}\\
           &\lesssim \lambda_{q+1}^{(1-\epsilon_{q+1})\left(\frac{j}{k}-1\right) - s\epsilon_{q+1}} A_{q+1}^{\frac{j}{k}-1} \left(\sum_{\mu \not=0} \left\Vert \hat{E}_q(t,\mu) \right\Vert_{C_t^0}\right)\\
           &\lesssim \lambda_{q+1}^{(1-\epsilon_{q+1})\left(\frac{j}{k}-1\right) - s\epsilon_{q+1}} A_{q+1}^{\frac{j}{k}-1}
        \end{split}
    \end{equation}
    \textbf{Case 4:} $n_1 + n_2 + \ldots + n_j \geq 2$.
    Since $h_{q+1}\equiv 1$ on $\operatorname{supp}_t E_q$, we have
\[
    E_q^{n_i}h_{q+1}=E_q^{n_i}
    \qquad \text{whenever } n_i\geq 1,
\]
while for $n_i=0$,
\[
    \left(E_q^0h_{q+1}\right)^\wedge(t,\mu)
    =
    \delta_{\mu,0}h_{q+1}(t).
\]
Define
$$
C_q = \sum_{\xi \in \Z} \Vert \hat{E}_q(t,\xi) \Vert_{C_t^0} = \Vert E_q \Vert_{\A^0_{x,t}}.
$$
This quantity is finite due to the compact frequency support of $E_q$ discussed earlier and is clearly independent of $q+1$. Now using the algebra property of $\A^{0}_{x,t}$, we have
\[
    \|E_q^{n_i}h_{q+1}\|_{\dot{\mathbb{A}}_{x,t}^{-s}} < \Vert E_q^{n_i} \Vert_{\A^0_{x,t}}
    \lesssim C_q^{n_i}
\]
for every $n_i\geq0$.

Fix $n_1,\ldots,n_j\geq0$ with $n_1+\cdots+n_j\geq2$. We split according to whether $\xi_1+\cdots+\xi_j=0$, or not.

If $\xi_1+\cdots+\xi_j=0$, then
\[
    \eta_1+\cdots+\eta_j
    =
    \sum_{i=1}^j
    \left(\eta_i-\lambda_{q+1}^{\epsilon_{q+1}}\xi_i\right).
\]
After the change of variables $\mu_i=\eta_i-\lambda_{q+1}^{\epsilon_{q+1}}\xi_i$, we get
\[
\begin{aligned}
&\sum_{\eta_1+\cdots+\eta_j\neq0}
\prod_{i=1}^j
\left\|
\left(E_q^{n_i}h_{q+1}\right)^\wedge
\left(t,\eta_i-\lambda_{q+1}^{\epsilon_{q+1}}\xi_i\right)
\right\|_{C_t^0}
|\eta_1+\cdots+\eta_j|^{-s}
\\
&\qquad\le
\prod_{i=1}^j\sum_{\mu_i \in \Z}
\|(E_q^{n_i}h_{q+1})^\wedge(t,\mu_i)\|_{C_t^0}
\lesssim_q
C_q^{n_1+\cdots+n_j}.
\end{aligned}
\]
Therefore, using the same convolution estimate as in Case 2,
\begin{equation}\label{eq:page_29_est}
    \begin{split}
        &\sum_{\xi_1+\cdots+\xi_j=0}
\lambda_{q+1}^{j(1-\epsilon_{q+1})(\frac1k-1)}
A_{q+1}^{\frac jk-(n_1+\cdots+n_j)}
\left|\binom{1/k}{n_1}\cdots\binom{1/k}{n_j}\right|
\\
&\quad\times
\prod_{i=1}^j
\left|
\hat{\phi}_k
\left(\lambda_{q+1}^{\epsilon_{q+1}-1}\xi_i\right)
\right|
C_q^{n_1+\cdots+n_j}
\\
&\lesssim_{q,j,s}
\left|\binom{1/k}{n_1}\cdots\binom{1/k}{n_j}\right|
\lambda_{q+1}^{j(1-\epsilon_{q+1})(\frac1k-1)}
\lambda_{q+1}^{(j-1)(1-\epsilon_{q+1})}
A_{q+1}^{\frac jk-(n_1+\cdots+n_j)}
C_q^{n_1+\cdots+n_j}
\\
&=
\left|\binom{1/k}{n_1}\cdots\binom{1/k}{n_j}\right|
\lambda_{q+1}^{(1-\epsilon_{q+1})(\frac jk-1)}
A_{q+1}^{\frac jk}
\left(\frac{C_q}{A_{q+1}}\right)^{n_1+\cdots+n_j}.
    \end{split}
\end{equation}

Now suppose $\xi_1+\cdots+\xi_j\neq0$. Then, on the support of the frequency cutoff factors appearing in $\hat{w}_{q+1}$, each summand vanishes unless $\left|\eta_i-\lambda_{q+1}^{\epsilon_{q+1}}\xi_i \right| \lesssim \mu_{q+1}^{1/2}$ for $i=1,\cdots,j$. Hence,

\begin{equation}\label{eq:page29_est}
|\eta_1 +\cdots +\eta_j| \ge \lambda_{q+1}^{\epsilon_{q+1}}|\xi_1+ \cdots +\xi_j| - \sum_{i=1}^j \left(\eta_i-\lambda_{q+1}^{\epsilon_{q+1}}\xi_i\right) \ge \frac12 \lambda_{q+1}^{\epsilon_{q+1}}|\xi+\cdots+\xi_j|
\end{equation}
provided $\lambda_{q+1}$ is sufficiently large. Therefore
\[
|\eta_1 +\cdots +\eta_j|^{-s} \lesssim_s \lambda_{q+1}^{-s\epsilon_{q+1}}|\xi_1 + \cdots +\xi_j|^{-s}.
\]
Consequently,
\[
\begin{aligned}
    &\sum_{\eta_1,\cdots,\eta_j}\prod_{i=1}^j \Vert(E_q^{n_i}h_{q+1})^\wedge(t,\eta_i - \lambda_{q+1}^{\epsilon_{q+1}}\xi_i)\Vert_{C_t^0} |\eta_1 + \cdots + \eta_j|^{-s}\\
    &\qquad \lesssim_{j,s} \lambda_{q+1}^{-s\epsilon_{q+1}}|\xi_1 + \cdots + \xi_j|^{-s} \prod_{i=1}^j \sum_{\eta_i \in \Z} \Vert (E^{n_i}_q h_{q+1})^\wedge (t,\eta_i - \lambda_{q+1}^{\epsilon_{q+1}}\xi_i)\Vert_{C_t^0}\\
    &\qquad \lesssim_{q,j,s} \lambda_{q+1}^{-s\epsilon_{q+1}}|\xi_1 + \cdots + \xi_j|^{-s} C^{n_1+\cdots+n_j}_q.
\end{aligned}
\]
Using the same convolution estimate as in Case 1, we obtain
\begin{equation}\label{eq:page_29_est_3}
    \begin{split}
        &\sum_{\xi_1+\cdots+\xi_j\neq0}
\lambda_{q+1}^{j(1-\epsilon_{q+1})(\frac1k-1)}
A_{q+1}^{\frac jk-(n_1+\cdots+n_j)}
\left|\binom{1/k}{n_1}\cdots\binom{1/k}{n_j}\right|
\\
&\quad\times
\prod_{i=1}^j
\left|
\hat{\phi}_k
\left(\lambda_{q+1}^{\epsilon_{q+1}-1}\xi_i\right)
\right|
\lambda_{q+1}^{-s\epsilon_{q+1}}
|\xi_1+\cdots+\xi_j|^{-s}
C_q^{n_1+\cdots+n_j}
\\
&\lesssim_{q,j,s}
\left|\binom{1/k}{n_1}\cdots\binom{1/k}{n_j}\right|
\lambda_{q+1}^{j(1-\epsilon_{q+1})(\frac1k-1)-s\epsilon_{q+1}}
\lambda_{q+1}^{(j-1)(1-\epsilon_{q+1})}
A_{q+1}^{\frac jk-(n_1+\cdots+n_j)}
C_q^{n_1+\cdots+n_j}
\\
&=
\left|\binom{1/k}{n_1}\cdots\binom{1/k}{n_j}\right|
\lambda_{q+1}^{(1-\epsilon_{q+1})(\frac jk-1)-s\epsilon_{q+1}}
A_{q+1}^{\frac jk}
\left(\frac{C_q}{A_{q+1}}\right)^{n_1+\cdots+n_j}
\\
&\leq
\left|\binom{1/k}{n_1}\cdots\binom{1/k}{n_j}\right|
\lambda_{q+1}^{(1-\epsilon_{q+1})(\frac jk-1)}
A_{q+1}^{\frac jk}
\left(\frac{C_q}{A_{q+1}}\right)^{n_1+\cdots+n_j}.
    \end{split}
\end{equation}

Combining the two subcases, we obtain
\[
\begin{aligned}
&\sum_{\eta_1+\cdots+\eta_j\neq0}
\sum_{\substack{n_1,\ldots,n_j\geq0\\ n_1+\cdots+n_j\geq2}}
\sum_{\xi_1,\ldots,\xi_j\in\mathbb Z}
\lambda_{q+1}^{j(1-\epsilon_{q+1})(\frac1k-1)}
A_{q+1}^{\frac jk-(n_1+\cdots+n_j)}
\left|\binom{1/k}{n_1}\cdots\binom{1/k}{n_j}\right|
\\
&\quad\times
\prod_{i=1}^j
\left|
\hat{\phi}_k
\left(\lambda_{q+1}^{\epsilon_{q+1}-1}\xi_i\right)
\right|
\prod_{i=1}^j
\left\|
\left(E_q^{n_i}h_{q+1}\right)^\wedge
\left(t,\eta_i-\lambda_{q+1}^{\epsilon_{q+1}}\xi_i\right)
\right\|_{C_t^0}
|\eta_1+\cdots+\eta_j|^{-s}
\\
&\lesssim_{q,j,s}
\lambda_{q+1}^{(1-\epsilon_{q+1})(\frac jk-1)}
A_{q+1}^{\frac jk}
\sum_{\substack{n_1,\ldots,n_j\geq0\\ n_1+\cdots+n_j\geq2}}
\left|\binom{1/k}{n_1}\cdots\binom{1/k}{n_j}\right|
\left(\frac{C_q}{A_{q+1}}\right)^{n_1+\cdots+n_j}.
\end{aligned}
\]
Choosing $A_{q+1}$ sufficiently large so that
\begin{equation}\label{eq:A_q+1_cond}
    \frac{C_q}{A_{q+1}}\leq \frac{1}{2},
\end{equation}
the absolute convergence of the binomial series gives
\[
\sum_{\substack{n_1,\ldots,n_j\geq0\\ n_1+\cdots+n_j\geq2}}
\left|\binom{1/k}{n_1}\cdots\binom{1/k}{n_j}\right|
\left(\frac{C_q}{A_{q+1}}\right)^{n_1+\cdots+n_j}
\lesssim_{j,k}
\left(\frac{C_q}{A_{q+1}}\right)^2.
\]
Therefore
\[
\begin{aligned}
&\sum_{\eta_1+\cdots+\eta_j\neq0}
\sum_{\substack{n_1,\ldots,n_j\geq0\\ n_1+\cdots+n_j\geq2}}
\sum_{\xi_1,\ldots,\xi_j\in\mathbb Z}
\lambda_{q+1}^{j(1-\epsilon_{q+1})(\frac1k-1)}
A_{q+1}^{\frac jk-(n_1+\cdots+n_j)}
\left|\binom{1/k}{n_1}\cdots\binom{1/k}{n_j}\right|
\\
&\quad\times
\prod_{i=1}^j
\left|
\hat{\phi}_k
\left(\lambda_{q+1}^{\epsilon_{q+1}-1}\xi_i\right)
\right|
\prod_{i=1}^j
\left\|
\left(E_q^{n_i}h_{q+1}\right)^\wedge
\left(t,\eta_i-\lambda_{q+1}^{\epsilon_{q+1}}\xi_i\right)
\right\|_{C_t^0}
|\eta_1+\cdots+\eta_j|^{-s}
\\
&\lesssim_{q,j,s}
\lambda_{q+1}^{(1-\epsilon_{q+1})(\frac jk-1)}
A_{q+1}^{\frac jk}
\left(\frac{C_q}{A_{q+1}}\right)^2
\\
&\lesssim_{q,j,s}
\lambda_{q+1}^{(1-\epsilon_{q+1})(\frac jk-1)}
A_{q+1}^{\frac jk-2}.
\end{aligned}
\] 
Combining Cases 1-4, we obtain
\[
\begin{aligned}
AF(w_{q+1},j)
\lesssim_{q,j,s}\;&
\lambda_{q+1}^{(1-\epsilon_{q+1})(\frac jk-1)-s\epsilon_{q+1}}
A_{q+1}^{j/k}
\\
&+
\lambda_{q+1}^{(1-\epsilon_{q+1})(\frac jk-1)}
A_{q+1}^{\frac jk-1}
\|E_q\|_{\dot{\mathbb A}^{-s}_xC_t^0}
\\
&+
\lambda_{q+1}^{(1-\epsilon_{q+1})(\frac jk-1)-s\epsilon_{q+1}}
A_{q+1}^{\frac jk-1}
\\
&+
\lambda_{q+1}^{(1-\epsilon_{q+1})(\frac jk-1)}
A_{q+1}^{\frac jk-2}.
\end{aligned}
\]
Since $A_{q+1}\geq1$, $\lambda_{q+1}^{-s\epsilon_{q+1}}\leq1$, and
$\|E_q\|_{\mathbb{A}_{x,t}^{0}}$ is a $\lambda_{q+1}$-independent constant, we conclude
\[
AF(w_{q+1},j)
\lesssim_{q,j,s}
\lambda_{q+1}^{(1-\epsilon_{q+1})(\frac jk-1)}
A_{q+1}^{j/k}.
\]
\end{proof}
\begin{lemma}\label{lem:horrible2}
For every $1\leq j\leq k$, we have
\[
    ZF(w_{q+1},j)
    \lesssim_{q,j}
    \lambda_{q+1}^{(1-\epsilon_{q+1})(\frac jk-1)}
    A_{q+1}^{j/k}.
\]
\end{lemma}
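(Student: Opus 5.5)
The plan is to mirror the proof of Lemma \ref{lem:horrible1}, using the same Fourier expansion of $\hat w_{q+1}$. The only change is that the weight $|\eta_1+\cdots+\eta_j|^{-s}$ is replaced by the indicator of $\eta_1+\cdots+\eta_j=0$. We first bound
\[
\Vert \hat w_{q+1}(t,\eta_1)\cdots\hat w_{q+1}(t,\eta_j)\Vert_{C_t^0}
\]
by the multiple sum over $n_1,\dots,n_j\ge 0$ and $\xi_1,\dots,\xi_j\in\Z$, exactly as at the start of that proof. The multipliers $m_{\le\mu_{q+1}}$ and $m_{\le\mu_{q+1}^{1/2}}$ are bounded by $1$ (Definition \ref{def:projs}) and are simply discarded. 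We then sum over $\eta_1+\cdots+\eta_j=0$ and substitute $\mu_i=\eta_i-\lambda_{q+1}^{\epsilon_{q+1}}\xi_i$.

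The key observation is that, for fixed $\xi_1,\dots,\xi_j$, the constrained $\eta$-sum can be bounded crudely by the unconstrained one. This gives
\[
\sum_{\eta_1+\cdots+\eta_j=0}\prod_{i=1}^j\Vert (E_q^{n_i}h_{q+1})^\wedge(t,\eta_i-\lambda_{q+1}^{\epsilon_{q+1}}\xi_i)\Vert_{C_t^0}\le \prod_{i=1}^j \Vert E_q^{n_i}h_{q+1}\Vert_{\A^0_{x,t}}\lesssim C_q^{n_1+\cdots+n_j}.
\]
Here $\Vert h_{q+1}\Vert_{\A^0_{x,t}}\lesssim 1$ when $n_i=0$, and $C_q=\Vert E_q\Vert_{\A^0_{x,t}}$ together with the algebra property handles $n_i\ge1$, as in Case 4. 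The crude bound is too lossy, however: summing over all $\xi_1,\dots,\xi_j$ freely would cost $\lambda_{q+1}^{j(1-\epsilon_{q+1})}$ instead of the desired $\lambda_{q+1}^{(j-1)(1-\epsilon_{q+1})}$.

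\textbf{Main obstacle: retaining one constraint on $\xi$.} The constraint $\sum\eta_i=0$ forces $\lambda_{q+1}^{\epsilon_{q+1}}(\xi_1+\cdots+\xi_j)=-\sum\mu_i$. On the support of the cutoffs each $|\mu_i|\lesssim\mu_{q+1}^{1/2}$ (for $n_i\ge1$ one can also use the compact frequency support of $E_q$ in $B(0,k\lambda_q^{3/2})$, as in Case 3). Hence for $\lambda_{q+1}$ large we get $|\sum_i\mu_i|<\lambda_{q+1}^{\epsilon_{q+1}}$, which forces $\xi_1+\cdots+\xi_j=0$ and $\sum_i\mu_i=0$. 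This is the same gap argument as in \eqref{eq:page29_est} and \eqref{eq:AF-case3-gap}, and it is the step I expect to require care. I would make it uniform in $n_i$ by using the cutoff $m_{\le\mu_{q+1}^{1/2}}(\eta_i-\lambda_{q+1}^{\epsilon_{q+1}}\xi_i)$ rather than the support of $E_q^{n_i}$, since the latter grows with $n_i$.

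Once $\xi_1+\cdots+\xi_j=0$ is imposed, we sum over $\xi_1,\dots,\xi_{j-1}$ with $\xi_j$ determined. The generalized Young estimate from Case 1, namely
\[
\Vert(|\hat\phi_k|\ast\cdots\ast|\hat\phi_k|)(\lambda_{q+1}^{\epsilon_{q+1}-1}\cdot)\Vert_{L^\infty}\lesssim1
\]
combined with the Riemann-sum comparison, gives a factor $\lambda_{q+1}^{(j-1)(1-\epsilon_{q+1})}$. Multiplying by $\lambda_{q+1}^{j(1-\epsilon_{q+1})(\frac1k-1)}$ yields $\lambda_{q+1}^{(1-\epsilon_{q+1})(\frac jk-1)}$.

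It remains to sum over the $n_i$. This gives
\[
A_{q+1}^{j/k}\sum_{n_1,\dots,n_j\ge0}\left|\binom{1/k}{n_1}\cdots\binom{1/k}{n_j}\right|\left(\frac{C_q}{A_{q+1}}\right)^{n_1+\cdots+n_j},
\]
which is $\lesssim_{j,k}A_{q+1}^{j/k}$ by \eqref{eq:A_q+1_cond} and the absolute convergence of the binomial series. This proves the lemma.
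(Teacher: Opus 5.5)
Your proof is correct, but it retains the one needed constraint on the $\xi$'s by a different mechanism than the paper.

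\textbf{The paper's route.} It uses no frequency gap. After the change of variables $\mu_i=\eta_i-\lambda_{q+1}^{\epsilon_{q+1}}\xi_i$, it sums over $\mu_1,\ldots,\mu_j$ freely. For each fixed $\mu$, the constraint either has no solutions or pins $\xi_1+\cdots+\xi_j$ to a fixed integer $\Xi$. The paper then proves the lattice-sum bound $\sum_{\xi_1+\cdots+\xi_j=\Xi}\prod_i|\hat{\phi}_k(\lambda_{q+1}^{\epsilon_{q+1}-1}\xi_i)|\lesssim_j\lambda_{q+1}^{(j-1)(1-\epsilon_{q+1})}$ uniformly in $\Xi$, via the $\sup_{Y}$ form of the Young/Riemann-sum estimate.

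\textbf{Your route.} You use the support of $m_{\le\mu_{q+1}^{1/2}}$ and the separation $2j\mu_{q+1}^{1/2}<\lambda_{q+1}^{\epsilon_{q+1}}$. This shows that only $\xi_1+\cdots+\xi_j=0$, together with $\mu_1+\cdots+\mu_j=0$, can contribute. The sum then factorizes, and you need the convolution bound only at $\Xi=0$.

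\textbf{Comparison.} The paper's argument is more robust: it works with all multipliers discarded entirely and needs no scale separation between the amplitude cutoff and the slab lattice spacing. Yours yields sharper structural information, namely that zero-output interactions are purely resonant in both the slab and amplitude frequencies. This is in the spirit of the gap arguments in Cases 3 and 4 of Lemma \ref{lem:horrible1}.

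\textbf{One adjustment.} Your early remark that the multipliers are ``simply discarded'' is inconsistent with how you use them later. You may bound their size by $1$, but you must keep the indicator of the support of $m_{\le\mu_{q+1}^{1/2}}$ for the gap step, which is $|\mu_i|\le 2\mu_{q+1}^{1/2}$. You also need $\lambda_{q+1}^{\epsilon_{q+1}/2}>2k$, which is harmless since $\lambda_{q+1}$ is chosen after $\epsilon_{q+1}$.
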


\begin{proof}
Recall that
\[
ZF(w_{q+1},j)
=
\sum_{\eta_1+\cdots+\eta_j=0}
\left\|
\hat{w}_{q+1}(t,\eta_1)\cdots
\hat{w}_{q+1}(t,\eta_j)
\right\|_{C_t^0}.
\]
Using the same expansion as in the previous lemma, we have
\[
\begin{aligned}
ZF(w_{q+1},j)
&\lesssim
\sum_{\eta_1+\cdots+\eta_j=0}
\sum_{n_1,\ldots,n_j=0}^{\infty}
\sum_{\xi_1,\ldots,\xi_j\in\mathbb Z}
\lambda_{q+1}^{j(1-\epsilon_{q+1})(\frac1k-1)}
A_{q+1}^{\frac jk-(n_1+\cdots+n_j)}
\\
&\quad\times
\left|
\binom{1/k}{n_1}\cdots\binom{1/k}{n_j}
\right|
\prod_{i=1}^j
\left|
\hat{\phi}_k
\left(\lambda_{q+1}^{\epsilon_{q+1}-1}\xi_i\right)
\right|
\\
&\quad\times
\prod_{i=1}^j
\left\|
\left(E_q^{n_i}h_{q+1}\right)^\wedge
\left(t,\eta_i-\lambda_{q+1}^{\epsilon_{q+1}}\xi_i\right)
\right\|_{C_t^0}.
\end{aligned}
\]

We first record the stage-$q$ bound used below. Since $h_{q+1}\equiv1$ on
$\operatorname{supp}_t E_q$, we have
\[
    E_q^{n_i}h_{q+1}=E_q^{n_i}
    \qquad \text{whenever } n_i\geq1,
\]
while for $n_i=0$,
\[
    \left(E_q^0h_{q+1}\right)^\wedge(t,\mu)
    =
    \delta(\mu)h_{q+1}(t).
\]
Recall that we had chosen
$$
C_q = \Vert E_q \Vert_{\A_{x,t}^0}.
$$
and thus
\[
    \|E_q^{n_i}h_{q+1}\|_{{\mathbb{A}}_{x,t}^{0}}
    \lesssim
    C_q^{n_i}
\]
for every $n_i\geq0$, with an implicit constant independent of $n_i$.

Fix $n_1,\ldots,n_j\geq0$. For fixed $\xi_1,\ldots,\xi_j$, make the change of variables
\[
    \mu_i
    =
    \eta_i-\lambda_{q+1}^{\epsilon_{q+1}}\xi_i.
\]
The constraint $\eta_1+\cdots+\eta_j=0$ becomes
\[
    \lambda_{q+1}^{\epsilon_{q+1}}(\xi_1+\cdots+\xi_j)
    +
    \mu_1+\cdots+\mu_j
    =
    0.
\]
Therefore the contribution of this fixed choice of $n_1,\ldots,n_j$ is bounded by
\[
\begin{aligned}
&\lambda_{q+1}^{j(1-\epsilon_{q+1})(\frac1k-1)}
A_{q+1}^{\frac jk-(n_1+\cdots+n_j)}
\left|
\binom{1/k}{n_1}\cdots\binom{1/k}{n_j}
\right|
\\
&\quad\times
\sum_{\mu_1,\ldots,\mu_j\in\mathbb Z}
\prod_{i=1}^j
\left\|
\left(E_q^{n_i}h_{q+1}\right)^\wedge(t,\mu_i)
\right\|_{C_t^0}
\\
&\quad\times
\sum_{\substack{\xi_1,\ldots,\xi_j\in\mathbb Z\\
\lambda_{q+1}^{\epsilon_{q+1}}(\xi_1+\cdots+\xi_j)
+
\mu_1+\cdots+\mu_j=0}}
\prod_{i=1}^j
\left|
\hat{\phi}_k
\left(\lambda_{q+1}^{\epsilon_{q+1}-1}\xi_i\right)
\right|.
\end{aligned}
\]

We claim that, uniformly in $\Xi\in\mathbb Z$,
\[
\sum_{\xi_1+\cdots+\xi_j=\Xi}
\prod_{i=1}^j
\left|
\hat{\phi}_k
\left(\lambda_{q+1}^{\epsilon_{q+1}-1}\xi_i\right)
\right|
\lesssim_j
\lambda_{q+1}^{(j-1)(1-\epsilon_{q+1})}.
\]
Indeed, writing
\[
    \xi_j=\Xi-\xi_1-\cdots-\xi_{j-1},
\]
we get for $\lambda_{q+1}$ chosen large enough
\begin{equation}\label{eq:page_32_conv_est}
    \begin{split}
        &\lambda_{q+1}^{(j-1)(\epsilon_{q+1}-1)}
\sum_{\xi_1,\ldots,\xi_{j-1}}
\prod_{i=1}^{j-1}
\left|
\hat{\phi}_k
\left(\lambda_{q+1}^{\epsilon_{q+1}-1}\xi_i\right)
\right|
\\
&\qquad\qquad\times
\left|
\hat{\phi}_k
\left(
\lambda_{q+1}^{\epsilon_{q+1}-1}\Xi
-
\lambda_{q+1}^{\epsilon_{q+1}-1}\xi_1
-\cdots
-\lambda_{q+1}^{\epsilon_{q+1}-1}\xi_{j-1}
\right)
\right|
\\
&\lesssim_j
\sup_{Y\in\mathbb R}
\int_{\mathbb R^{j-1}}
\prod_{i=1}^{j-1}
|\hat{\phi}_k(y_i)|
\left|
\hat{\phi}_k
\left(
Y-y_1-\cdots-y_{j-1}
\right)
\right|
\,dy_1\cdots dy_{j-1}
\\
&\lesssim_j
\|\hat{\phi}_k\|_{L^1}^{j-1}
\|\hat{\phi}_k\|_{L^\infty}
\lesssim_j 1.
    \end{split}
\end{equation}
Multiplying by $\lambda_{q+1}^{(j-1)(1-\epsilon_{q+1})}$ gives the claim.

Now fix $\mu_1$,$\ldots$, $\mu_j$. The constraint
\[
\lambda_{q+1}^{\epsilon_{q+1}}(\xi_1+\cdots+\xi_j)
+
\mu_1+\cdots+\mu_j=0
\]
either has no solutions, or else imposes
\[
    \xi_1+\cdots+\xi_j
    =
    -\lambda_{q+1}^{-\epsilon_{q+1}}(\mu_1+\cdots+\mu_j),
\]
which is a fixed integer value. Hence the preceding estimate gives
\begin{equation*}
\sum_{\substack{\xi_1,\ldots,\xi_j\in\mathbb Z\\
\lambda_{q+1}^{\epsilon_{q+1}}(\xi_1+\cdots+\xi_j)
+
\mu_1+\cdots+\mu_j=0}}
\prod_{i=1}^j
\left|
\hat{\phi}_k
\left(\lambda_{q+1}^{\epsilon_{q+1}-1}\xi_i\right)
\right| \lesssim_j
\lambda_{q+1}^{(j-1)(1-\epsilon_{q+1})}.
\end{equation*}

Therefore the contribution of this fixed choice of $n_1,\ldots,n_j$ is bounded by
\[
\begin{aligned}
&\lambda_{q+1}^{j(1-\epsilon_{q+1})(\frac1k-1)}
\lambda_{q+1}^{(j-1)(1-\epsilon_{q+1})}
A_{q+1}^{\frac jk-(n_1+\cdots+n_j)}
\left|
\binom{1/k}{n_1}\cdots\binom{1/k}{n_j}
\right|
\\
&\quad\times
\prod_{i=1}^j
\|E_q^{n_i}h_{q+1}\|_{\mathbb A_x^0C_t^0}
\\
&\lesssim_{q,j}
\left|
\binom{1/k}{n_1}\cdots\binom{1/k}{n_j}
\right|
\lambda_{q+1}^{(1-\epsilon_{q+1})(\frac jk-1)}
A_{q+1}^{j/k}
\left(\frac{C_q}{A_{q+1}}\right)^{n_1+\cdots+n_j}.
\end{aligned}
\]

Summing over $n_1,\ldots,n_j\geq0$, we obtain
\[
\begin{aligned}
ZF(w_{q+1},j)
&\lesssim_{q,j}
\lambda_{q+1}^{(1-\epsilon_{q+1})(\frac jk-1)}
A_{q+1}^{j/k}
\\
&\quad\times
\sum_{n_1,\ldots,n_j=0}^{\infty}
\left|
\binom{1/k}{n_1}\cdots\binom{1/k}{n_j}
\right|
\left(\frac{C_q}{A_{q+1}}\right)^{n_1+\cdots+n_j}.
\end{aligned}
\]
Recall from \eqref{eq:A_q+1_cond} we had chosen $A_{q+1}$ large enough so that
\[
    \frac{C_q}{A_{q+1}}\leq \frac12,
\]
and hence the binomial series converges absolutely. Therefore
\[
ZF(w_{q+1},j)
\lesssim_{q,j}
\lambda_{q+1}^{(1-\epsilon_{q+1})(\frac jk-1)}
A_{q+1}^{j/k}.
\]
\end{proof}
We also record the exact coefficient of the $E_q$-term which appears in the pure $w_{q+1}$ contribution when $j=k$. Inspecting Case 2 in the proof of the $AF(w_{q+1},j)$ estimate, this coefficient is 
\begin{equation}\label{eq:E-coeff-pure-w}
K_{q+1} := \sum_{\xi_1+\cdots+\xi_k=0} \prod_{i=1}^k \left| (\mathbb P_{\le \nu_{q+1}}\rho_{q+1})^{\wedge}(\xi_i)\right|    
\end{equation}
Indeed, in the case $j=k$, the binomial factor is exactly
\[
k\left|\binom{1/k}{1}\right|=1,
\]
and the powers of $A_{q+1}$ and $\lambda_{q+1}$ cancel. Thus the only coefficient multiplying $\|E_q\|_{\dot{\mathbb{A}}_{x,t}^{-s}}$ is $K_{q+1}$. By the choice of the profile in Lemma~\ref{lem:boldW}, we have
\[
\hat{\phi}_k\ge0.
\]
Therefore the Fourier coefficients of $\rho_{q+1}$ are nonnegative. Moreover, the multiplier of $\mathbb P_{\le\nu_{q+1}}$ satisfies
\[
0\le m_{\le\nu_{q+1}}\le1.
\]
Hence
\[
    K_{q+1} \le \sum_{\xi_1+\cdots+\xi_k=0}\prod_{i=1}^k \hat{\rho}_{q+1}(\xi_i) = \left(\rho_{q+1}^k\right)^{\wedge}(0) = \int_\T \rho_{q+1}^k\,dx =1
\]
Thus
\begin{equation}\label{eq:E-coeff-bound}
    K_{q+1} \le 1.
\end{equation}
We now return to \eqref{eq:abs_sum_expansion}. Fix $1\leq m\leq k-1$. We split the mixed sum according to whether
\[
    \xi_1+\cdots+\xi_m=0
\]
or not.

First suppose
\[
    \xi_1+\cdots+\xi_m\neq0.
\]
Using Petree's inequality
\[
    |\alpha+\beta|^{-s}\mathbf 1_{\alpha+\beta\neq0}
    \lesssim_s
    |\alpha|^{-s}\langle \beta\rangle^s,
    \qquad \alpha\neq0,
\]
with
\[
    \alpha=\xi_1+\cdots+\xi_m,
    \qquad
    \beta=\xi_{m+1}+\cdots+\xi_k,
\]
we obtain
\[
\begin{aligned}
&\sum_{\substack{\xi_1+\cdots+\xi_k\neq0\\ \xi_1+\cdots+\xi_m\neq0}}
\left\Vert
\prod_{i=1}^m \hat{w}_{q+1}(t,\xi_i)
\prod_{i=m+1}^k \hat{u}_q(t,\xi_i)
\right\Vert_{C_t^0}
|\xi_1+\cdots+\xi_k|^{-s}
\\
&\lesssim_s
\sum_{\substack{\xi_1,\ldots,\xi_m\\ \xi_1+\cdots+\xi_m\neq0}}
\left\Vert
\prod_{i=1}^m \hat{w}_{q+1}(t,\xi_i)
\right\Vert_{C_t^0}
|\xi_1+\cdots+\xi_m|^{-s}
\\
&\qquad\qquad\times
\sum_{\xi_{m+1},\ldots,\xi_k}
\left\Vert
\prod_{i=m+1}^k \hat{u}_q(t,\xi_i)
\right\Vert_{C_t^0}
\left\langle \xi_{m+1}+\cdots+\xi_k\right\rangle^s
\\
&=
AF(w_{q+1},m)
\sum_{\xi_{m+1},\ldots,\xi_k}
\left\Vert
\prod_{i=m+1}^k \hat{u}_q(t,\xi_i)
\right\Vert_{C_t^0}
\left\langle \xi_{m+1}+\cdots+\xi_k\right\rangle^s.
\end{aligned}
\]
Since $u_q$ is a fixed smooth stage-$q$ function with finite Fourier support, the final sum is bounded by a stage-$q$ constant. Hence, by Lemma \ref{lem:horrible1}, we get
\[
\begin{aligned}
&\sum_{\substack{\xi_1+\cdots+\xi_k\neq0\\ \xi_1+\cdots+\xi_m\neq0}}
\left\Vert
\prod_{i=1}^m \hat{w}_{q+1}(t,\xi_i)
\prod_{i=m+1}^k \hat{u}_q(t,\xi_i)
\right\Vert_{C_t^0}
|\xi_1+\cdots+\xi_k|^{-s} \lesssim_{q,m,s}
\lambda_{q+1}^{(1-\epsilon_{q+1})(\frac mk-1)}
A_{q+1}^{m/k}.
\end{aligned}
\]

Now suppose
\[
    \xi_1+\cdots+\xi_m=0.
\]
Since the full output frequency satisfies
\[
    \xi_1+\cdots+\xi_k\neq0,
\]
we must have
\[
    \xi_{m+1}+\cdots+\xi_k\neq0.
\]
Therefore
\[
    |\xi_1+\cdots+\xi_k|^{-s}
    =
    |\xi_{m+1}+\cdots+\xi_k|^{-s}.
\]
Hence
\[
\begin{aligned}
&\sum_{\substack{\xi_1+\cdots+\xi_k\neq0\\ \xi_1+\cdots+\xi_m=0}}
\left\Vert
\prod_{i=1}^m \hat{w}_{q+1}(t,\xi_i)
\prod_{i=m+1}^k \hat{u}_q(t,\xi_i)
\right\Vert_{C_t^0}
|\xi_1+\cdots+\xi_k|^{-s}
\\
&\leq
\sum_{\xi_1+\cdots+\xi_m=0}
\left\Vert
\prod_{i=1}^m \hat{w}_{q+1}(t,\xi_i)
\right\Vert_{C_t^0}
\\
&\qquad\qquad\times
\sum_{\xi_{m+1}+\cdots+\xi_k\neq0}
\left\Vert
\prod_{i=m+1}^k \hat{u}_q(t,\xi_i)
\right\Vert_{C_t^0}
|\xi_{m+1}+\cdots+\xi_k|^{-s}
\\
&=
ZF(w_{q+1},m)
\sum_{\xi_{m+1}+\cdots+\xi_k\neq0}
\left\Vert
\prod_{i=m+1}^k \hat{u}_q(t,\xi_i)
\right\Vert_{C_t^0}
|\xi_{m+1}+\cdots+\xi_k|^{-s}.
\end{aligned}
\]
Again, the final sum is bounded by a stage-$q$ constant. Therefore, by Lemma \ref{lem:horrible2}, we get
\[
\begin{aligned}
&\sum_{\substack{\xi_1+\cdots+\xi_k\neq0\\ \xi_1+\cdots+\xi_m=0}}
\left\Vert
\prod_{i=1}^m \hat{w}_{q+1}(t,\xi_i)
\prod_{i=m+1}^k \hat{u}_q(t,\xi_i)
\right\Vert_{C_t^0}
|\xi_1+\cdots+\xi_k|^{-s}
\\
&\lesssim_{q,m,s}
\lambda_{q+1}^{(1-\epsilon_{q+1})(\frac mk-1)}
A_{q+1}^{m/k}.
\end{aligned}
\]
Combining the two subcases, for each $1\leq m\leq k-1$,
\begin{equation}\label{eq:mixed-prod-bound}
\begin{aligned}
&\sum_{\xi_1+\cdots+\xi_k\neq0}
\left\Vert
\prod_{i=1}^m \hat{w}_{q+1}(t,\xi_i)
\prod_{i=m+1}^k \hat{u}_q(t,\xi_i)
\right\Vert_{C_t^0}
|\xi_1+\cdots+\xi_k|^{-s}
\\
&\lesssim_{q,m,s}
\lambda_{q+1}^{(1-\epsilon_{q+1})(\frac mk-1)}
A_{q+1}^{m/k}.
\end{aligned}
\end{equation}
Since $m\leq k-1$, the exponent
\[
    (1-\epsilon_{q+1})\left(\frac mk-1\right)
\]
is strictly negative. Thus, after $A_{q+1}$ has been fixed, we choose $\lambda_{q+1}$ sufficiently large so that the total mixed contribution in \eqref{eq:abs_sum_expansion} is bounded by $2^{-q-103}$.

It remains to estimate the pure $w_{q+1}$ contribution. For this term, we use the first estimate in Lemma \ref{lem:horrible1} with $j=k$, keeping the $E_q$-coefficient from \eqref{eq:E-coeff-pure-w} explicit. This gives
\[
\begin{aligned}
AF(w_{q+1},k)
\le\;&
K_{q+1}\|E_q\|_{\dot{\mathbb{A}}_{x,t}^{-s}}
\\
&+
C_{q,k,s}
\left(
    \lambda_{q+1}^{-s\epsilon_{q+1}}A_{q+1}
    +
    \lambda_{q+1}^{-s\epsilon_{q+1}}
    +
    A_{q+1}^{-1}
\right).
\end{aligned}
\]
By the inductive hypothesis for the error, and \eqref{eq:E-coeff-bound},
\[
    K_{q+1}\|E_q\|_{\dot{\mathbb{A}}_{x,t}^{-s}}
    \le 2^{-q-200}.
\]
Now we choose $A_{q+1}$ sufficiently large so that
\begin{equation}\label{eq:A_q+1_cond2}
    \frac{C_{q,k,s}}{A_{q+1}}
    \leq 2^{-q-104}.
\end{equation}
After $A_{q+1}$ is fixed, choose $\lambda_{q+1}$ sufficiently large so that
\begin{equation}\label{eq:pure-prod-small}
    C_{q,k,s}\lambda_{q+1}^{-s\epsilon_{q+1}}A_{q+1}
    +
    C_{q,k,s}\lambda_{q+1}^{-s\epsilon_{q+1}}
    \leq
    2^{-q-104}.
\end{equation}
Hence
\[
    AF(w_{q+1},k)
    \leq
    2^{-q-103}.
\]
Combining these estimates in \eqref{eq:abs_sum_expansion}, we obtain
\[
\begin{aligned}
&\sum_{\xi_1+\cdots+\xi_k\neq0}
\left\Vert
\hat{u}_{q+1}(t,\xi_1)\cdots \hat{u}_{q+1}(t,\xi_k)
\right\Vert_{C_t^0}
|\xi_1+\cdots+\xi_k|^{-s}
\\
&\leq
C-2^{-q-100}
+
2^{-q-103}
+
2^{-q-103}.
\end{aligned}
\]
Since
\[
    2^{-q-103}+2^{-q-103}
    =
    2^{-q-102}
    <
    2^{-q-101}
    =
    2^{-q-100}-2^{-(q+1)-100},
\]
we conclude
\[
\sum_{\xi_1+\cdots+\xi_k\neq0}
\left\Vert
\hat{u}_{q+1}(t,\xi_1)\cdots \hat{u}_{q+1}(t,\xi_k)
\right\Vert_{C_t^0}
|\xi_1+\cdots+\xi_k|^{-s}
<
C-2^{-(q+1)-100}.
\]
This proves Item \ref{i:5} at level $q+1$.

\section{Further Results}\label{sec:6}

In this section we briefly indicate some other PDEs where the methods we have presented to this point will apply \textit{mutatis mutandis}.

\subsection{Stationary \textsf{gKdV}}

In \cite{Pathak} the the third author considered the stationary \textsf{KdV} equation and proved both a flexibility and rigidity theorem. Our methods in this paper improve on this result in two regards. First, we may replace the paraproduct notion for defining the nonlinearity which was employed in \cite{Pathak} by a stationary analogue of Definition \ref{def:paras}. We will in fact prove the notion for defining the nonlinearity in Definition \ref{def:paras} is stronger than this "paraproduct notion" in Lemma \ref{lem:absolute-implies-paraproduct} in the \hyperref[sec:appendix]{appendix}. Second, and more obviously, we are able to treat stationary \textsf{gKdV} instead of being confined to \textsf{KdV}.

\begin{theorem}[\textbf{Rigidity for stationary \textsf{gKdV}}]\label{thm:stat-gkdv-rig}
Let $k\geq 2$ be an integer and let $a,b\in\mathbb R$ with $a\neq0$.
Suppose that $u\in L^k(\T)$ is real-valued and is a weak solution of the
stationary generalized \textsf{KdV} equation
\[
a\,u^{(3)}+b\,(u^k)'=0
\]
in the sense that
\begin{equation*}
    -a\int_\T u\,\psi^{(3)}\,dx
    -
    b\int_\T u^k\,\psi'\,dx
    =
    0
\end{equation*}
for every $\psi\in C^\infty(\T)$. Then $u\in C^\infty(\T)$.
\end{theorem}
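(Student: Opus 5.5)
The plan is to integrate the equation once and then bootstrap regularity. Since $u\in L^k(\T)$, we have $u^k\in L^1(\T)$, so the identity in Theorem \ref{thm:stat-gkdv-rig} says that the distribution $F:=a\,u''+b\,u^k$ satisfies $\langle F,\psi'\rangle=0$ for every $\psi\in C^\infty(\T)$.

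\textbf{Step 1: integrate once.} From $\langle F,\psi'\rangle=0$ for all $\psi$, the distributional derivative $F'$ vanishes. On $\T$ this forces $F$ to be a constant $c$: every nonzero Fourier mode of $F$ is annihilated by testing against $\psi=e^{-2\pi i\xi x}$, since $\psi'$ is a nonzero multiple of $\psi$. Hence
\[
a\,u''=c-b\,u^k \qquad\text{in }\mathcal D'(\T),
\]
and the right-hand side lies in $L^1(\T)$.

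\textbf{Step 2: first gain of regularity.} Since $a\neq0$, we get $u''\in L^1(\T)$, and this mean-zero condition is consistent: testing with $\psi\equiv1$ shows $c=b\int_\T u^k$. A distribution on $\T$ whose second derivative is in $L^1$ has first derivative in $BV\subset L^\infty$. Concretely, $\widehat{u}(\xi)=-\widehat{u''}(\xi)/(4\pi^2\xi^2)$ for $\xi\neq0$, so $\widehat{u'}\in\ell^1$, which makes $u'$ continuous and $u\in C^1(\T)$. In particular $u\in L^\infty$, so $u^k$ is continuous, and then $u''=a^{-1}(c-b u^k)\in C^0$, giving $u\in C^2$.

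\textbf{Step 3: bootstrap.} Inductively, if $u\in C^m$ then $u^k\in C^m$ because $t\mapsto t^k$ is a polynomial, so there is no loss from the nonlinearity. Then $u''\in C^m$, which gives $u\in C^{m+2}$. Iterating yields $u\in C^\infty(\T)$.

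\textbf{Main obstacle.} The only delicate point is the first step from $L^k$ to $L^\infty$. Once $u$ is bounded, everything is routine because the power nonlinearity maps $C^m$ to $C^m$. For this step I would rely on the one-dimensional fact that $L^1$ control of $u''$ already puts $\widehat{u'}$ in $\ell^1$ via $\sum_{\xi\neq0}|\xi|^{-1}|\widehat{u''}(\xi)|$. This needs some care, because the bound $|\widehat{u''}|\le\|u''\|_{L^1}$ only gives $\widehat{u'}\in\ell^\infty$ decaying like $|\xi|^{-1}$, not $\ell^1$. So I will instead write $u'$ explicitly as $\int_0^x u''\,dy$ plus a constant. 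This is absolutely continuous, hence bounded and continuous, and it gives $u\in W^{2,1}\subset C^1$ directly.

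Real-valuedness of $u$ is used only to make sense of $u^k$ as a real function in the $L^1$ identity.
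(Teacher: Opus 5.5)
Your proposal is correct and follows the paper's proof. You integrate once to get $a\,u''+b\,u^k=C$, deduce $u''\in L^1$ and hence $u\in W^{2,1}(\T)\subset C^1(\T)$, then bootstrap $C^m\to C^{m+2}$ using that $u\mapsto u^k$ preserves $C^m$. Delete the sentence in Step 2 asserting $\widehat{u'}\in\ell^1$, which, as you yourself note, does not follow from $u''\in L^1$. Keep only the explicit antiderivative argument $u'(x)=\int_0^x u''\,dy+\mathrm{const}$, which is exactly the content of the paper's appeal to $W^{2,1}\subset C^1$.
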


\begin{proof}
Since $u\in L^k(\T)$, we have $u^k\in L^1(\T)$, so the weak formulation is
an ordinary distributional identity. It says precisely that
\[
a\,u^{(3)}+b\,(u^k)'=0
\]
in $\mathcal D'(\T)$, or equivalently
\[
\bigl(a\,u''+b\,u^k\bigr)'=0 .
\]
Hence
\begin{equation}\label{eq:integrated-stat-gkdv}
    a\,u''+b\,u^k=C
\end{equation}
in $\mathcal D'(\T)$ for some constant $C\in\mathbb R$.

Since $u^k\in L^1(\T)$, \eqref{eq:integrated-stat-gkdv} gives
\[
u''=\frac{C-bu^k}{a}\in L^1(\T).
\]
Together with $u\in L^k(\T)\subset L^1(\T)$, standard one-dimensional
periodic elliptic regularity yields
\[
u\in W^{2,1}(\T).
\]
Therefore, by the Sobolev embedding $W^{2,1}(\T)\subset C^1(\T)$, we have
\[
u\in C^1(\T).
\]
Consequently $u^k\in C^1(\T)$. Returning to
\eqref{eq:integrated-stat-gkdv}, we obtain
\[
u''=\frac{C-bu^k}{a}\in C^1(\T),
\]
and hence $u\in C^3(\T)$.

Now bootstrap. If $u\in C^m(\T)$, then $u^k\in C^m(\T)$, so
\eqref{eq:integrated-stat-gkdv} gives $u''\in C^m(\T)$, and therefore
$u\in C^{m+2}(\T)$. Starting from $u\in C^1(\T)$, this induction gives $ u\in C^{\infty}(\T)$.
\end{proof}

\begin{theorem}[\textbf{Flexibility for stationary \textsf{gKdV}}]\label{thm:stat-gkdv-flex}
Let $k\geq2$ be an integer, and let $a,b\in\mathbb R$ with
\[
ab\neq0.
\]
For every $0<\varepsilon<1$, there exists a real-valued distribution $u$ on
$\T$ such that
\[
u\in L^{k-\varepsilon}(\T)\setminus L^k(\T),
\]
the product $\mathbb P_{\neq0}(u^k)$ is well-defined by the absolute
summability criterion of Definition~\ref{def:paras}, with the obvious modifications for the stationary setting, as an element of
$\dot H^{-s}(\T)$ for all sufficiently large $s$, and $u$ solves
\[
a\,u^{(3)}+b\,(u^k)'=0
\]
in the weak singular sense
\begin{equation*}
    -a\langle u,\psi^{(3)}\rangle
    -
    b\left\langle
        \mathbb P_{\neq0}(u^k),\psi'
    \right\rangle
    =
    0
\end{equation*}
for every $\psi\in C^\infty(\T)$. Moreover, when $k\geq3$, the construction
may be arranged so that
\[
u\in H^{\frac12-\frac1k-\varepsilon}(\T).
\]
\end{theorem}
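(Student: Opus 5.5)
We will run the time-dependent scheme of Sections \ref{sec:3}--\ref{sec:5} with the time variable deleted. After integrating once, the stationary equation reads $a u'' + b u^k = \mathrm{const}$ modulo means. The relaxed problem is therefore
\[
a\,u_q^{(3)} + b\,(u_q^k)' = E_q',
\]
with $E_q$ smooth and mean free. The iterate is $u_{q+1}=u_q+w_{q+1}$, with
\[
w_{q+1}=\mathbb P_{\le \mu_{q+1}^{1/2}}\bigl(a_q\bigr)\,\mathbb P_{\le\nu_{q+1}}(\rho_{q+1}),
\qquad
a_q=\bigl(A_{q+1}-b^{-1}E_q\bigr)^{1/k}.
\]
The time cutoff $h_{q+1}$ is dropped, since no temporal support needs to be tracked. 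Here $\rho_{q+1}=\rho_{k,\lambda_{q+1},\epsilon_{q+1}}$ comes from Lemma \ref{lem:boldW}, again with $\epsilon_{q+1}=2^{-(q+1)}$. The new error splits as $E_{q+1}=E_O+E_N+E_D$, where
\[
E_O=E_q+b\,w_{q+1}^k,\qquad
E_N=b\sum_{n=1}^{k-1}\binom kn w_{q+1}^n u_q^{k-n},\qquad
E_D=a\,w_{q+1}''.
\]
The temporal error $E_T$ disappears, so the restriction on $\beta$ is no longer needed.

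The induction hypotheses are the stationary versions of items \ref{i:3}--\ref{i:5} of Proposition \ref{prop:ind}:
\begin{itemize}
\item the frequency supports of the increments lie in disjoint annuli;
\item the increments are summable in $L^p$ for $p<k$ and, when $k\ge3$, in $\dot H^\alpha$ for $\alpha<\frac12-\frac1k$;
\item $\|E_q\|_{\dot{\mathbb A}^{-s}}<2^{-q-200}$;
\item the absolute-summability quantity for $u_q^k$ stays below $C-2^{-q-100}$.
\end{itemize}
Each estimate of Section \ref{sec:5} carries over verbatim once the $C_t^0$ norms are removed.
\begin{itemize}
\item $E_O$ is controlled by Lemma \ref{lem:pseudo_diff_cont_Wiener}, \eqref{eq:wiener_est_nonlinearity} and Lemma \ref{lem:freq_proj_rho}.
\item $E_N$ and $E_D$ are controlled through $\|w_{q+1}\|_{L^n}$, $n<k$, together with $s>3$.
\item The absolute-summability quantity is handled by Lemmas \ref{lem:horrible1} and \ref{lem:horrible2} and the coefficient bound \eqref{eq:E-coeff-bound}.
\end{itemize}
Passing to the limit as in the proof of Theorem \ref{thm:main} gives a distribution $u$ that satisfies the weak singular identity. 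Moreover $u\in L^{p}$ for every $p<k$, and $u\in H^{\frac12-\frac1k-\varepsilon}$ when $k\ge 3$.

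Two points are genuinely new: the solution must be nontrivial, and it must fail to lie in $L^k$. Nontriviality follows from a lower bound $\|u_q\|_{L^1}>\delta(1+2^{-q})$, propagated exactly as in item \ref{i:3}. The failure of $L^k$ comes from Theorem \ref{thm:stat-gkdv-rig} with no further work. If $u$ were in $L^k$, then $u^k\in L^1$, and by the appendix comparison $\mathbb P_{\neq0}(u^k)$ coincides with the classical product. Hence $u$ would be a classical weak solution and so, by Theorem \ref{thm:stat-gkdv-rig}, smooth. It therefore suffices to ensure that $u$ is \emph{not} smooth. Since every increment $w_{q+1}$ has $\dot H^{1}$ norm that grows with $\lambda_{q+1}$, and the frequency supports are disjoint, I will make $\|u\|_{\dot H^1}=\infty$. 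By Plancherel, $\|u\|_{\dot H^1}^2\ge\|w_{q+1}\|_{\dot H^1}^2$ for each $q$, so it suffices to have $\|w_{q+1}\|_{\dot H^1}\to\infty$. For this I will add the inductive lower bound $\|w_{q+1}\|_{\dot H^1}\ge 1$, which holds for large $\lambda_{q+1}$ since $\rho_{q+1}$ concentrates at scale $\lambda_{q+1}^{-1}$ with $L^k$ norm one.

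The base case takes $u_0=A\cos(2\pi\lambda_0x)$, with $E_0$ chosen so that the relaxed equation holds. This is possible because $a u_0''+bu_0^k$ minus its mean is a derivative of a smooth periodic function. Taking $A$ small gives the smallness bounds, but $E_0$ is then only small in $\dot{\mathbb A}^{-s}$ up to a constant multiple of $A$. Fixing that constant is the step I expect to be the main obstacle.

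A second point needs care: $\mathbb P_{=0}(w_{q+1}^k)$ must cancel $E_q$ through the coefficient $b$. This forces $A_{q+1}-b^{-1}E_q>0$, which holds for either sign of $b$ once $A_{q+1}>2|b|^{-1}\|E_q\|_{L^\infty}$. The condition $a\neq0$ is used only to normalize the dispersion error. Its size is irrelevant, since $\|E_D\|_{\dot{\mathbb A}^{-s}}\lesssim|a|\,\|w_{q+1}\|_{L^1}$.
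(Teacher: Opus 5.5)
Your proposal is correct and is essentially the ``obvious modification'' of Sections~\ref{sec:3}--\ref{sec:5} that the paper invokes without giving details, and deriving $u\notin L^k$ from Theorem~\ref{thm:stat-gkdv-rig} combined with $u\notin \dot H^1$ is a natural way to make that claim rigorous. One small fix is needed there: a per-step bound $\|w_{q+1}\|_{\dot H^1}\ge 1$ gives divergence only through the orthogonal sum $\|u\|_{\dot H^1}^2\ge\sum_q\|w_q\|_{\dot H^1}^2$ (from disjoint frequency supports), not from $\|u\|_{\dot H^1}\ge\|w_{q+1}\|_{\dot H^1}$ alone; alternatively, require $\|w_{q+1}\|_{\dot H^1}\ge q+1$, which large $\lambda_{q+1}$ also gives.

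The base-case issue you flag is not an obstacle, because $\|E_0\|_{\dot{\mathbb A}^{-s}}\le C(a,b,k,s,\lambda_0)\,(A+A^k)$ with the constant fixed before $A$ is chosen, and $\delta$ is defined afterwards from $u_0$. The one place where ``verbatim'' is slightly off is the Taylor coefficient of $a_q=(A_{q+1}-b^{-1}E_q)^{1/k}$: it multiplies the $K_{q+1}\|E_q\|_{\dot{\mathbb A}^{-s}}$ term in $AF(w_{q+1},k)$ by $|b|^{-1}$. This is harmless: either divide the equation by $a$ and rescale $u\mapsto cu$ so that $a=1$, $|b|=1$, or shrink the inductive error bound by a factor $\min(1,|b|)$.
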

\begin{proof}
    The proof is an obvious modification of the arguments presented.
\end{proof}

\subsection{The generalized Benjamin-Ono equation}\label{sec:gBO-variant}

The same convex integration scheme also applies to the generalized
Benjamin-Ono (\textsf{gBO}) equation in the formulation used by Molinet-Tanaka in \cite{MT}.  Namely, if $\mathcal H$ denotes the periodic Hilbert transform,
\[
\widehat{\mathcal H f}(\xi)=-i\,{\rm sgn}(\xi)\hat{f}(\xi),
\qquad
{\rm sgn}(0)=0,
\]
then the $k$-generalized Benjamin-Ono equation ($k$-\textsf{gBO}) is
\begin{equation}\label{eq:gBO}
    \partial_t u-\mathcal H\partial_{xx}u
    +\sigma\partial_x(u^k)=0.
\end{equation}
Equivalently, this is the case $L_2=-\mathcal H\partial_{xx}$ and
$f(u)=\sigma u^k$ in the generalized dispersive framework of \cite{MT}.

The proof of Theorem~\ref{thm:main} is insensitive to this replacement of the
linear dispersive operator.  Indeed, in the relaxed equation for \textsf{gBO} one writes
\[
    \partial_t u-\mathcal H\partial_{xx}u
    +\sigma\partial_x(u^k)
    =
    \partial_x E .
\]
The initial relaxed solution is defined with this linear operator in place of
$\partial_{xxx}$. After that harmless change, the inductive step is the same as in the proof Theorem~\ref{thm:main}. After setting $u_{q+1}=u_q+w_{q+1}$, the only change in the error decomposition is the dispersion error.  For \textsf{gKdV} we used
\[
    \partial_{xxx}w_{q+1}
    =
    \partial_x\bigl(\partial_{xx}w_{q+1}\bigr),
    \qquad
    E_D=\partial_{xx}w_{q+1}.
\]
For \textsf{gBO} we instead have
\[
    -\mathcal H\partial_{xx}w_{q+1}
    =
    \partial_x\bigl(-\mathcal H\partial_xw_{q+1}\bigr),
\]
since $\mathcal H$ commutes with $\partial_x$.  Thus the dispersion error is
replaced by
\[
    E_D^{\rm BO}:=-\mathcal H\partial_xw_{q+1}.
\]
Moreover, the estimate of the dispersion error is easier in the \textsf{gBO} case.  The
Hilbert transform has Fourier multiplier of modulus at most one, and therefore
is bounded on the Fourier absolute spaces used in the scheme:
\[
    \|\mathcal H f\|_{\dot{\mathbb A}_{x,t}^{-s}}
    \leq
    \|f\|_{\dot{\mathbb A}_{x,t}^{-s}}.
\]
Consequently,
\[
\begin{aligned}
    \|E_D^{\rm BO}\|_{\dot{\mathbb{A}}_{x,t}^{-s}}
    &=
    \|-\mathcal H\partial_xw_{q+1}\|_{\dot{\mathbb{A}}_{x,t}^{-s}}  \\
    &\lesssim
    \sum_{\xi\neq0}
    |\xi|^{1-s}
    \|\hat{w}_{q+1}(\cdot,\xi)\|_{C_t^0}.
\end{aligned}
\]
Using
\[
    \|\hat{w}_{q+1}(\cdot,\xi)\|_{C_t^0}
    \leq
    \|w_{q+1}\|_{C_t^0L_x^1},
\]
we obtain, for $s>2$,
\[
\begin{aligned}
    \|E_D^{\rm BO}\|_{\dot{\mathbb{A}}_{x,t}^{-s}}
    &\lesssim
    \|w_{q+1}\|_{C_t^0L_x^1}
    \sum_{\xi \not=0}
    |\xi|^{1-s}  \lesssim
    \|w_{q+1}\|_{C_t^0L_x^1} .
\end{aligned}
\]
By Lemma~\ref{lem:wq+1_est},
\[
    \|w_{q+1}\|_{C_t^0L_x^1}
    \lesssim_q
    \lambda_{q+1}^{(1-\epsilon_{q+1})(\frac1k-1)}.
\]
Therefore
\[
    \|E_D^{\rm BO}\|_{\dot{\mathbb{A}}_{x,t}^{-s}}
    \lesssim_q
    \lambda_{q+1}^{(1-\epsilon_{q+1})(\frac1k-1)}.
\]
Choosing $\lambda_{q+1}$ sufficiently large at each stage makes this error as small as required.  This is strictly better than the corresponding \textsf{gKdV} dispersion estimate, where $\partial_{xxx}$ produces the error $\partial_{xx}w_{q+1}$ and hence one extra power of frequency.

Consequently the same construction gives nonunique weak singular solutions to \eqref{eq:gBO}.  In particular, there exists a nontrivial solution to \eqref{eq:gBO}, in the corresponding weak singular sense, with zero initial data, such that
\[
    u\in \bigcap_{\epsilon > 0} C_t^0L_x^{k-\epsilon}([0,1]\times\T)
\]
When $k\geq3$, the same Sobolev refinement as in Theorem~\ref{thm:main}
also gives
\[
    u\in \bigcap_{\epsilon > 0} C_t^0H_x^{\frac{1}{2}-\frac{1}{k} - \epsilon}([0,1]\times\T).
\]
Here the weak formulation is obtained from
Definition~\ref{def:weak_para_soln} by replacing the test-function term
$\partial_{xxx}\phi$ with $-\mathcal H\partial_{xx}\phi$.

\subsection{Other dispersive variants}\label{sec:other-dispersive-variants}
The construction is not tied to the precise Airy dispersion
$\partial_{xxx}$ or the dispersion term appearing in \textsf{gBO}. The same argument applies to any translation-invariant
linear dispersive operator $\mathcal L$ whose symbol can be divided by the
spatial derivative with at most polynomial loss.

More precisely, suppose that $\mathcal L$ is a skew-adjoint Fourier
multiplier of the form
\[
    \widehat{\mathcal L f}(\xi)=i\ell(\xi)\hat f(\xi),
\]
where $\ell:\mathbb Z\to\mathbb R$ is odd, $\ell(0)=0$, and for some
$r\ge0$,
\[
    \left|\frac{\ell(\xi)}{\xi}\right|
    \lesssim
    |\xi|^r,
    \qquad \xi\neq0.
\]
Equivalently, on nonzero frequencies,
\[
    \mathcal L=\partial_x\mathcal M,
\]
where $\mathcal M$ is a real Fourier multiplier of order at most $r$. Then the convex integration scheme for
\[
    \partial_tu+\mathcal L u+\sigma\partial_x(u^k)=0
\]
is unchanged except for the dispersion error.

Indeed, the relaxed equation becomes
\[
    \partial_tu+\mathcal L u+\sigma\partial_x(u^k)=\partial_xE.
\]
After setting $u_{q+1}=u_q+w_{q+1}$, the oscillation, Nash, and temporal
errors are exactly the same as in the proof of Theorem~\ref{thm:main}.  The only
replacement is
\[
    E_D=\mathcal M w_{q+1}
\]
instead of $E_D=\partial_{xx}w_{q+1}$. We obtain, for $s>r+1$,
\[
\begin{aligned}
    \|E_D\|_{\dot{\mathbb{A}}_{x,t}^{-s}}
    &\lesssim
    \sum_{\xi\neq0}
    |\xi|^{-s}
    \left|\frac{\ell(\xi)}{\xi}\right|
    \|\hat{w}_{q+1}(\cdot,\xi)\|_{C_t^0} \\
    &\lesssim
    \|w_{q+1}\|_{C_t^0L_x^1}
    \sum_{\xi \not=0}
    |\xi|^{r-s}  \\
    &\lesssim
    \|w_{q+1}\|_{C_t^0L_x^1}.
\end{aligned}
\]
Since
\[
    \|w_{q+1}\|_{C_t^0L_x^1}
    \lesssim_q
    \lambda_{q+1}^{(1-\epsilon_{q+1})(\frac1k-1)},
\]
Choosing $\lambda_{q+1}$ sufficiently large at each stage makes this error as small as required. Thus, the induction closes exactly as before.

This includes the generalized dispersive class considered by Molinet-Tanaka
\cite{MT}, where the linear operator $L_{\alpha+1}$ has Fourier
symbol $-ip_{\alpha+1}(\xi)$ with
\[
    p_{\alpha+1}'(\xi)\sim \xi^\alpha,
    \qquad
    p_{\alpha+1}''(\xi)\sim \xi^{\alpha-1},
    \qquad
    1\le\alpha\le2.
\]
In that case
\[
    p_{\alpha+1}(\xi)\sim |\xi|^\alpha\xi,
\]
so after division by $\partial_x$ the dispersion error has order
$r=\alpha\le2$.  Therefore the estimate above is no worse than the \textsf{gKdV}
dispersion estimate.  The endpoint $\alpha=2$ gives \textsf{gKdV}, while the endpoint
$\alpha=1$ gives generalized Benjamin-Ono from Subsection~\ref{sec:gBO-variant}.
The same observation also covers mixed constant-coefficient dispersions such as
\[
    \partial_tu+
    \bigl(
        c_3\partial_{xxx}
        -c_2\mathcal H\partial_{xx}
        +c_1\partial_x
    \bigr)u
    +\sigma\partial_x(u^k)=0,
\]
and, with a larger choice of $s$, higher-order dispersive models such as
\[
    \partial_tu+\partial_x^5u+\sigma\partial_x(u^k)=0.
\]
In every case, the nonlinear part of the construction is unchanged; only the
order of the dispersion error changes.

\section{Appendix}\label{sec:appendix}
We first record the elementary fact that weak singular solutions conserve their
spatial mean.

\begin{lemma}[Conservation of the spatial mean]\label{lem:mean-conservation}
Let $u \in C_t^0 L_x^1$ be a weak singular solution of
\[
\partial_t u+\partial_{xxx}u+\sigma\partial_x(u^k)=0
\]
on $[0,1]\times\T$ in the sense of Definition~\ref{def:weak_para_soln},
with initial datum $u_0$. Then
\[
\int_{\T}u(t,x)\,dx=\int_{\T}u_0(x)\,dx
\qquad
\text{for every }t\in[0,1].
\]
Equivalently,
\[
\hat{u}(t,0)=\hat{u}_0(0)
\qquad
\text{for every }t\in[0,1].
\]
\end{lemma}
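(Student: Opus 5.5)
The plan is to test the weak formulation of Definition~\ref{def:weak_para_soln} against test functions that depend only on time. Fix $\theta\in C_c^\infty([0,1))$ and set $\phi(t,x)=\theta(t)$. This $\phi$ is spatially constant, so it belongs to $C^\infty_{c,t}H^s_x$, and
\[
\partial_{xxx}\phi=0,\qquad \partial_x\phi=0 .
\]
Both spatial derivatives vanish, so the dispersive pairing $\langle u,\partial_{xxx}\phi\rangle$ and the nonlinear pairing $\langle \mathbb{P}_{\neq0}(u^k),\partial_x\phi\rangle_{\dot H^{s'},\dot H^{-s'}}$ are both zero. This works whatever the singular nature of $u^k$ is, since $\mathbb{P}_{\neq0}(u^k)$ only needs to be defined as a distribution paired against zero.

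The key point to check is the first pairing. Since $u\in C_t^0L^1_x$, the pairing $\langle u(t),\partial_t\phi(t)\rangle$ against a spatially constant function is simply
\[
\theta'(t)\int_\T u(t,x)\,dx=\theta'(t)\,\hat u(t,0).
\]
The right-hand side of the weak formulation becomes $-\theta(0)\hat u_0(0)$. The identity therefore reduces to
\[
\int_0^1 \theta'(t)\,\hat u(t,0)\,dt=-\theta(0)\,\hat u_0(0)
\qquad\text{for all }\theta\in C_c^\infty([0,1)).
\]

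Next I would exploit this in two stages. First, restrict to $\theta\in C_c^\infty((0,1))$. Then $\int_0^1\theta' m\,dt=0$ with $m(t):=\hat u(t,0)$ continuous, since $u\in C_t^0L^1_x$. So the distributional derivative of $m$ vanishes on $(0,1)$, and $m$ is constant on $(0,1)$; by continuity it is constant on $[0,1]$, say $m\equiv c$. Second, take a general $\theta$ with $\theta(0)=1$. Then $\int_0^1\theta' c\,dt=c(\theta(1)-\theta(0))=-c$, and comparing with $-\hat u_0(0)$ gives $c=\hat u_0(0)$.

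The only mildly delicate step is justifying that the Sobolev-type pairings in Definition~\ref{def:weak_para_soln} reduce to the ordinary $L^1$ integral when the test function is constant in $x$. This reduces to observing that $\langle u,\theta\rangle$ picks out precisely the zero Fourier mode of $u(t)$, which is finite and continuous in $t$ because $u\in C_t^0L^1_x$. Everything else is the standard argument that a continuous function with zero distributional derivative on an interval is constant.
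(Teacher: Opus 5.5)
Your proposal is correct and follows the paper's proof. Testing against $\phi(t,x)=\theta(t)$ kills the dispersive and nonlinear pairings, and the remaining one-dimensional identity then shows that the continuous function $\hat u(t,0)$ has vanishing distributional derivative, hence is constant. Your second stage, taking $\theta(0)=1$ to identify that constant with $\hat u_0(0)$, is slightly more careful than the paper's write-up: the paper replaces $\langle u_0,1\rangle$ by $\hat u(0,0)$ without comment, which tacitly assumes $u(0)=u_0$ in mean.
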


\begin{proof}
Recall
\[
\mathbb{P}_{=0,x}(u)(t):=\int_{\T}u(t,x)\,dx=\langle u(t),1\rangle_{H^s,H^{-s}}.
\]
Since $u\in C_t^0H_x^s$ and $1\in H^{-s}(\T)$, the function
$\mathbb{P}_{=0,x}(u)$ belongs to $C^0([0,1])$.

Let $\chi\in C_c^\infty([0,1))$, and use the spatially constant test
function
\[
\phi(t,x):=\chi(t).
\]
Then
\[
\partial_{xxx}\phi=0,
\qquad
\partial_x\phi=0.
\]
Therefore the dispersive term and the nonlinear term vanish in the weak
formulation. Hence Definition~\ref{def:weak_para_soln} gives
\[
\int_0^1 \langle u(t),\chi'(t)\rangle_{H^s,H^{-s}}\,dt
=
-\langle u_0,\chi(0)\rangle_{H^s,H^{-s}}.
\]
Since $\chi(t)$ is spatially constant, this is exactly
\[
\int_0^1 \mathbb{P}_{=0,x}(u)(t)\chi'(t)\,dt
=
-\mathbb{P}_{=0,x}(u)(0)\chi(0).
\]

On the other hand,
\[
\int_0^1 \chi'(t)\,dt=-\chi(0),
\]
because $\chi$ vanishes near $t=1$. Therefore
\[
\begin{aligned}
\int_0^1 \left(\mathbb{P}_{=0,x}(u)(t) - \mathbb{P}_{=0,x}(u)(0)\right)\chi'(t)\,dt
&=
\int_0^1 \mathbb{P}_{=0,x}(u)(t)\chi'(t)\,dt
-
\mathbb{P}_{=0,x}(u)(0)\int_0^1\chi'(t)\,dt \\
&=
-\mathbb{P}_{=0,x}(u)(0)\chi(0)-\mathbb{P}_{=0,x}(u)(0)(-\chi(0)) \\
&=0.
\end{aligned}
\]
Thus the distributional derivative of $\mathbb{P}_{=0,x}(u)(t)-\mathbb{P}_{=0,x}(u)(0)$ vanishes on $(0,1)$.
Hence $\mathbb{P}_{=0,x}(u)(t) - \mathbb{P}_{=0,x}(u)(0)$ is constant on $(0,1)$. Since $\mathbb{P}_{=0,x}(u)$ is continuous, the constant is zero, and hence
\[
\mathbb{P}_{=0,x}(u)(t) = \mathbb{P}_{=0,x}(u)(0)
\qquad
\text{for every }t\in[0,1].
\]
This proves the claim.
\end{proof}

\begin{remark}
In the special case of Theorem~\ref{thm:main}, the initial data is
$u_0=0$. Therefore Lemma~\ref{lem:mean-conservation} gives
\[
\int_{\T}u(t,x)\,dx=0
\qquad
\text{for every }t\in[0,1]
\]
for the solution we construct.
\end{remark}

Next we show absolute Fourier summability implies the paraproduct summability criterion utilized in \cite{ABGN}, \cite{CH}, \cite{Gismondi}, and \cite{GR}.

\begin{lemma}
\label{lem:absolute-implies-paraproduct}
Let $\alpha\in\mathbb R$, and let $f_1,\ldots,f_k\in\mathcal D'(\T)$.
Assume
\begin{equation*}
\sum_{\xi_1+\cdots+\xi_k\neq0}
|\hat{f}_1(\xi_1)|\cdots|\hat{f}_k(\xi_k)|
|\xi_1+\cdots+\xi_k|^\alpha
<\infty .
\end{equation*}
Assume moreover that the Littlewood-Paley resolution reconstructs every
input frequency which contributes to the above sum. Equivalently, either each
$f_i$ is mean-zero and the homogeneous resolution satisfies
\[
\sum_j \varphi_j(\xi)=1
\qquad\text{for every }\xi\neq0,
\]
or the resolution includes the zero mode and satisfies
\[
\sum_j \varphi_j(\xi)=1
\qquad\text{for every }\xi\in\mathbb Z.
\]
Then the paraproduct expansion
\begin{equation*}
\sum_{j_1,\ldots,j_k}
\mathbb P_{\neq0}\Bigl(
\mathbb P_{2^{j_1}}f_1\cdots \mathbb P_{2^{j_k}}f_k
\Bigr)
\end{equation*}
converges absolutely in the homogeneous weighted Wiener space
\[
\dot{\mathbb{A}}^\alpha(\T)
=
\left\{
g\in\mathcal D'(\T):
\|g\|_{\dot{\mathbb{A}}^\alpha}
:=
\sum_{\eta\neq0}|\eta|^\alpha|\hat{g}(\eta)|<\infty
\right\}.
\]
In particular it converges absolutely in $\dot H^\alpha(\T)$. Moreover its
sum agrees with the absolutely defined Fourier product
\begin{equation*}
\mathbb P_{\neq0}(f_1\cdots f_k)
:=
\sum_{\xi_1+\cdots+\xi_k\neq0}
\hat{f}_1(\xi_1)\cdots\hat{f}_k(\xi_k)
e^{2\pi i(\xi_1+\cdots+\xi_k)x}.
\end{equation*}
\end{lemma}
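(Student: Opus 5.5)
The plan is to control each paraproduct piece by the absolutely convergent Fourier series defining $\mathbb P_{\neq0}(f_1\cdots f_k)$, and then exchange the order of summation using Fubini--Tonelli for series of nonnegative terms. For fixed $(j_1,\ldots,j_k)$ the Fourier coefficient at $\eta\neq0$ of the piece is
\[
\sum_{\xi_1+\cdots+\xi_k=\eta}\prod_{i=1}^k\varphi_{j_i}(\xi_i)\hat f_i(\xi_i).
\]
This is a legitimate (finite) sum whenever the $f_i$ have finitely supported $\mathbb P_{2^{j_i}}f_i$, which is the case on $\T$ because each $\varphi_{j_i}$ is compactly supported. Hence each piece is a trigonometric polynomial, and its $\dot{\mathbb A}^\alpha$ norm satisfies
\[
\Bigl\|\mathbb P_{\neq0}\bigl(\mathbb P_{2^{j_1}}f_1\cdots\mathbb P_{2^{j_k}}f_k\bigr)\Bigr\|_{\dot{\mathbb A}^\alpha}\le\sum_{\xi_1+\cdots+\xi_k\neq0}|\xi_1+\cdots+\xi_k|^\alpha\prod_{i=1}^k\varphi_{j_i}(\xi_i)|\hat f_i(\xi_i)|.
\]

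Next I sum this bound over all $(j_1,\ldots,j_k)$. Every term is nonnegative, so Tonelli lets me move the $j$-sums inside. For each input frequency tuple with $\xi_1+\cdots+\xi_k\neq 0$ and all $\hat f_i(\xi_i)\neq 0$, the inner sums give $\prod_i\sum_{j_i}\varphi_{j_i}(\xi_i)=1$. This is exactly where the reconstruction hypothesis is used. In the mean-zero case a contributing $\xi_i$ is nonzero, since $\hat f_i(0)=0$. In the inhomogeneous case the resolution covers every $\xi\in\Z$. The bound $0\le\varphi_j$ from Definition~\ref{def:projs} keeps all the weights nonnegative. Consequently the sum of the $\dot{\mathbb A}^\alpha$ norms of the pieces is bounded by the assumed finite quantity, so the paraproduct expansion converges absolutely in the Banach space $\dot{\mathbb A}^\alpha$.

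To identify the limit, I fix $\eta\neq0$ and compute the $\eta$-th Fourier coefficient of the sum. Absolute convergence in $\dot{\mathbb A}^\alpha$ allows coefficientwise evaluation, because the coefficient map is continuous with norm $\le|\eta|^{-\alpha}$. The coefficient is then
\[
\sum_{j_1,\ldots,j_k}\ \sum_{\xi_1+\cdots+\xi_k=\eta}\prod_{i=1}^k\varphi_{j_i}(\xi_i)\hat f_i(\xi_i).
\]
This double series is absolutely summable by the bound above, since the fiber over $\eta$ is a subsum multiplied by $|\eta|^{-\alpha}$. Fubini therefore lets me swap the summations, and the partition of unity collapses the $j$-sums to $\sum_{\xi_1+\cdots+\xi_k=\eta}\hat f_1(\xi_1)\cdots\hat f_k(\xi_k)$. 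That is precisely the coefficient of the absolutely defined product $\mathbb P_{\neq0}(f_1\cdots f_k)$. Two series in $\dot{\mathbb A}^\alpha$ with equal nonzero-mode coefficients coincide, so the sum of the paraproduct expansion agrees with $\mathbb P_{\neq0}(f_1\cdots f_k)$.

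For the $\dot H^\alpha$ statement I use $\ell^1\subset\ell^2$ on the weighted sequences:
\[
\|g\|_{\dot H^\alpha}=\bigl\||\eta|^\alpha\hat g(\eta)\bigr\|_{\ell^2}\le\|g\|_{\dot{\mathbb A}^\alpha}.
\]
Absolute convergence in $\dot{\mathbb A}^\alpha$ therefore implies absolute convergence in $\dot H^\alpha$.

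The main obstacle is bookkeeping rather than analysis: the exchange of summation and the identification step both require the reconstruction hypothesis to hold on exactly the set of contributing input frequencies. Since everything is nonnegative at the level of absolute values, Tonelli and Fubini handle the rest.
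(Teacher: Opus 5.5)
Your proof is correct and follows essentially the same route as the paper: you bound each block's $\dot{\mathbb A}^\alpha$ norm by the weighted convolution sum, interchange the $j$-sums with Tonelli, identify the limit coefficientwise, and pass to $\dot H^\alpha$ via $\ell^1\subset\ell^2$. The differences are cosmetic. You use $\varphi_j\ge0$ to get $\sum_j\varphi_j(\xi_i)=1$ exactly, so your constant is $1$ rather than the paper's $2^k$ coming from the finite-overlap bound $\sum_j|\varphi_j|\le2$. You also identify the sum through continuity of the coefficient functionals plus Fubini, where the paper uses partial sums $S_N$ and dominated convergence.
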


\begin{proof}
Recall from Definition \ref{def:projs}, $\varphi_j$ denotes the Fourier multiplier of $\mathbb P_{2^j}$. We use the finite overlap
property
\begin{equation*}
\sum_j |\varphi_j(\xi)|\le 2
\end{equation*}
for every frequency $\xi$ covered by the resolution.

Fix $J=(j_1,\ldots,j_k)$. For $\eta\neq0$, the $\eta$-Fourier coefficient
of
\[
\mathbb P_{\neq0}\Bigl(\mathbb P_{2^{j_1}}f_1\cdots \mathbb P_{2^{j_k}}f_k\Bigr)
\]
is
\[
\sum_{\xi_1+\cdots+\xi_k=\eta}
\varphi_{j_1}(\xi_1)\hat{f}_1(\xi_1)\cdots
\varphi_{j_k}(\xi_k)\hat{f}_k(\xi_k).
\]
Hence
\begin{align*}
&\left\|
\mathbb P_{\neq0}\Bigl(\mathbb P_{2^{j_1}}f_1\cdots \mathbb P_{2^{j_k}}f_k\Bigr)
\right\|_{\dot{\mathbb{A}}^\alpha}
\notag\\
&\quad \le
\sum_{\xi_1+\cdots+\xi_k\neq0}
|\varphi_{j_1}(\xi_1)|\cdots |\varphi_{j_k}(\xi_k)|
|\hat{f}_1(\xi_1)|\cdots|\hat{f}_k(\xi_k)|
|\xi_1+\cdots+\xi_k|^\alpha .
\end{align*}
Summing over $j_1,\ldots,j_k$ and applying Tonelli's theorem gives
\begin{align*}
&\sum_{j_1,\ldots,j_k}
\left\|
\mathbb P_{\neq0}\Bigl(\mathbb P_{2^{j_1}}f_1\cdots \mathbb P_{2^{j_k}}f_k\Bigr)
\right\|_{\dot{\mathbb{A}}^\alpha}
\\
&\quad\le
\sum_{\xi_1+\cdots+\xi_k\neq0}
|\hat{f}_1(\xi_1)|\cdots|\hat{f}_k(\xi_k)|
|\xi_1+\cdots+\xi_k|^\alpha
\prod_{\ell=1}^k
\left(\sum_{j_\ell}|\varphi_{j_\ell}(\xi_\ell)|\right)
\\
&\quad\le
2^k
\sum_{\xi_1+\cdots+\xi_k\neq0}
|\hat{f}_1(\xi_1)|\cdots|\hat{f}_k(\xi_k)|
|\xi_1+\cdots+\xi_k|^\alpha
<\infty.
\end{align*}
Thus the paraproduct series converges absolutely in $\dot{\mathbb{A}}^\alpha$.

Since
\[
\begin{aligned}
\|\mathbb P_{\neq0}\Bigl(\mathbb P_{2^{j_1}}f_1\cdots \mathbb P_{2^{j_k}}f_k\Bigr)\|_{\dot H^\alpha}
&=
\left(
\sum_{\eta\neq0}|\eta|^{2\alpha}\left|\mathbb P_{\neq0}\Bigl(\mathbb P_{2^{j_1}}f_1\cdots \mathbb P_{2^{j_k}}f_k\Bigr)^\wedge(\eta)\right|^2
\right)^{1/2}\\
&\le
\sum_{\eta\neq0}|\eta|^\alpha\left|\mathbb P_{\neq0}\Bigl(\mathbb P_{2^{j_1}}f_1\cdots \mathbb P_{2^{j_k}}f_k\Bigr)^\wedge(\eta)\right|\\
&=
\|\mathbb P_{\neq0}\Bigl(\mathbb P_{2^{j_1}}f_1\cdots \mathbb P_{2^{j_k}}f_k\Bigr)\|_{\dot{\mathbb{A}}^\alpha},
\end{aligned}
\]
the same series also converges absolutely in $\dot H^\alpha$.

It remains to identify the limit. Let $I_N$ be an increasing sequence of
finite index sets exhausting the Littlewood-Paley resolution, and define
\[
S_N
:=
\sum_{j_1,\ldots,j_k\in I_N}
\mathbb P_{\neq0}\Bigl(\mathbb P_{2^{j_1}}f_1\cdots \mathbb P_{2^{j_k}}f_k\Bigr).
\]
For $\eta\neq0$,
\[
\hat{S}_N(\eta)
=
\sum_{\xi_1+\cdots+\xi_k=\eta}
\prod_{\ell=1}^k
\left(\sum_{j_\ell\in I_N}\varphi_{j_\ell}(\xi_\ell)\right)
\hat{f}_1(\xi_1)\cdots\hat{f}_k(\xi_k).
\]
For every contributing input frequency, the reconstruction assumption gives
\[
\sum_j \varphi_j(\xi)=1.
\]
Also,
\[
\left|\sum_{j\in I_N}\varphi_j(\xi)\right|
\le
\sum_j |\varphi_j(\xi)|
\le 2.
\]
Therefore, by dominated convergence and the absolute summability assumption,
\[
\hat{S}_N(\eta)\to
\sum_{\xi_1+\cdots+\xi_k=\eta}
\hat{f}_1(\xi_1)\cdots\hat{f}_k(\xi_k)
\]
for every $\eta\neq0$. Moreover, the same majorant gives convergence in
$\dot{\mathbb{A}}^\alpha$. Indeed,
\[
\|S_N-\mathbb P_{\neq0}(f_1\cdots f_k)\|_{\dot{\mathbb{A}}^\alpha}\to0
\]
by dominated convergence applied to the nonnegative summable majorant
\[
2^k
|\hat{f}_1(\xi_1)|\cdots|\hat{f}_k(\xi_k)|
|\xi_1+\cdots+\xi_k|^\alpha.
\]
Hence the paraproduct sum agrees with the absolutely defined Fourier product.
\end{proof}
We now compare our absolute Fourier product with Christ's cutoff formulation. We use Christ's terminology from \cite[Definitions~2.1-2.3]{Christ1}: a Fourier cutoff sequence is a sequence of finite-support Fourier multipliers $P_N$ whose multipliers are uniformly bounded and converge pointwise to $1$; a nonlinear expression exists in the cutoff sense if the corresponding regularized nonlinearities converge in distributions for every such sequence.

\begin{lemma}\label{lem:absolute-implies-christ-cutoff}
    Let $u\in C_t^0\mathcal D'_x([0,1] \times \T)$, let $k\ge2$, and let $s>0$. Assume
    \[
    \sum_{\xi_1+\cdots+\xi_k\neq 0} \left\|\hat u(\cdot,\xi_1)\cdots\hat u(\cdot,\xi_k)\right\|_{C_t^0}|\xi_1+\cdots+\xi_k|^{-s}<\infty.
    \]
    Define $\mathbb P_{\neq0}(u^k)$ by
    \[
    \left(\mathbb P_{\neq0}(u^k)\right)^{\wedge}(t,\eta) = \sum_{\xi_1+\cdots+\xi_k=\eta} \hat u(t,\xi_1)\cdots \hat u(t,\xi_k), \qquad \eta\neq 0,
    \]
    and set its zero Fourier mode equal to $0$. Then
    \[
    \mathbb P_{\neq0}(u^k) \in C_t^0\dot{\A}^{-s}_x([0,1] \times \T) \hookrightarrow C_t^0\dot H^{-s}_x([0,1] \times \T).
    \]
    Moreover, for every Fourier cutoff sequence $P_N$ in the sense of \cite[Definition~2.1]{Christ1},
    \[
    \partial_x \bigl((P_Nu)^k\bigr)\longrightarrow\partial_x\mathbb P_{\neq0}(u^k) \qquad \text{in } C_t^0\dot H^{-s-1}_x([0,1] \times \T).
    \]
    Consequently, $\partial_x(u^k)$ exists in Christ's cutoff sense and equals
    \[
    \partial_x\mathbb P_{\neq0}(u^k)
    \]
    in the sense of distributions. In particular, if
    \[
    \partial_t u+ \partial_{xxx}u+\sigma\partial_x\mathbb P_{\neq0}(u^k)=0
    \]
    in $\mathcal C_t^0 \mathcal{D}'_x([0,1]\times\T)$, then $u$ is a solution in Christ's extended sense with nonlinear term given by $\partial_x \mathbb P_{\neq0}(u^k)$.
\end{lemma}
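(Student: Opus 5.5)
The plan is to work entirely on the Fourier side, using the absolute summability hypothesis as a single dominating majorant. This mirrors the proof of Lemma \ref{lem:absolute-implies-paraproduct}.

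\textbf{Step 1: the Wiener bound.} For each $t$ and each $\eta\neq0$ we have
\[
\bigl|(\mathbb P_{\neq0}(u^k))^\wedge(t,\eta)\bigr|\le \sum_{\xi_1+\cdots+\xi_k=\eta}\|\hat u(\cdot,\xi_1)\cdots\hat u(\cdot,\xi_k)\|_{C_t^0}.
\]
Multiplying by $|\eta|^{-s}$ and summing over $\eta\neq0$ gives a bound uniform in $t$, indeed in the stronger space $\dot{\mathbb A}^{-s}_{x,t}$. Continuity in $t$ of each coefficient holds because each series is a uniformly convergent series of continuous functions, by the Weierstrass M-test with the majorant above. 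The tail $\sum_{|\eta|>M}|\eta|^{-s}(\cdots)$ tends to $0$ uniformly in $t$. Hence $t\mapsto \mathbb P_{\neq0}(u^k)(t)$ is continuous into $\dot{\A}^{-s}$. The embedding into $\dot H^{-s}$ follows from $\ell^1\subset\ell^2$, as in Lemma \ref{lem:absolute-implies-paraproduct}.

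\textbf{Step 2: convergence of the cutoffs.} Write $P_N$ with multiplier $m_N$, where $|m_N|\le B$ and $m_N\to1$ pointwise, and each $m_N$ has finite support. Since $P_Nu$ has finitely many modes, $(P_Nu)^k$ is a trigonometric polynomial. For $\eta\neq0$ its Fourier coefficient is
\[
\sum_{\xi_1+\cdots+\xi_k=\eta}\prod_i m_N(\xi_i)\,\hat u(t,\xi_i).
\]
The difference with the coefficient of $\mathbb P_{\neq0}(u^k)$ is then dominated termwise by $(B^k+1)\|\prod_i\hat u(\cdot,\xi_i)\|_{C_t^0}$. Each term tends to $0$ uniformly in $t$, because the $m_N(\xi_i)$ are constants converging to $1$. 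By dominated convergence over the countable index set $\{(\xi_1,\dots,\xi_k):\sum\xi_i\neq0\}$ with weight $|\sum\xi_i|^{-s}$, we obtain
\[
\sup_t\sum_{\eta\neq0}|\eta|^{-s}\bigl|((P_Nu)^k-\mathbb P_{\neq0}(u^k))^\wedge(t,\eta)\bigr|\to0.
\]
Here the supremum is pulled inside before summing, by using the $C_t^0$ norms. Applying $\partial_x$ multiplies the $\eta$-th coefficient by $2\pi i\eta$ and kills the zero mode; this is the point of differentiating, since the mean of $(P_Nu)^k$ need not converge. We thus get convergence in $C_t^0\dot{\A}^{-s-1}_x\hookrightarrow C_t^0\dot H^{-s-1}_x$, and in particular in distributions.

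\textbf{Step 3: Christ's sense.} Since the limit does not depend on the chosen cutoff sequence, $\partial_x(u^k)$ exists in Christ's cutoff sense and equals $\partial_x\mathbb P_{\neq0}(u^k)$. Substituting this into the equation gives the final claim.

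\textbf{Main obstacle.} The only delicate point is the interchange of limit and sum in Step 2 together with the uniformity in $t$. Handling it in the $\ell^1(C_t^0)$ framework, with the majorant from the hypothesis, should settle it.
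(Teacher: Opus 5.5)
Your proposal is correct and follows essentially the same route as the paper. You bound $\mathbb P_{\neq0}(u^k)$ in the Wiener norm by the absolute-summability majorant, get dominated convergence from $\bigl|\prod_i m_N(\xi_i)-1\bigr|\le B^k+1$, and note that $\partial_x$ kills the zero mode. You are also slightly more careful than the paper: you check continuity in $t$ of $\mathbb P_{\neq0}(u^k)$ via the Weierstrass M-test and uniform tails, which the paper's proof leaves implicit.
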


\begin{proof}
    The absolute summability assumption gives
    \[
    \begin{aligned}
        \|\mathbb P_{\neq0}(u^k)\|_{C_t^0\dot{\A}_x^{-s}} &\le \sum_{\xi_1+\cdots+\xi_k\neq0}\left\|\hat u(\cdot,\xi_1)\cdots \hat u(\cdot,\xi_k)\right\|_{C_t^0}|\xi_1+\cdots+\xi_k|^{-s} <\infty.
    \end{aligned}
    \]
    The embedding into $C_t^0\dot H^{-s}_x$ follows from $\ell^1\hookrightarrow \ell^2$.
    Let $P_N$ be a Fourier cutoff sequence in Christ's sense with multiplier $m_N$. For $\eta\neq0$,
    \[
    \left(\mathbb P_{\neq0}((P_Nu)^k)\right)^{\wedge}(t,\eta) = \sum_{\xi_1+\cdots+\xi_k=\eta} \prod_{i=1}^k m_N(\xi_i) \prod_{i=1}^k \hat u(t,\xi_i).
    \]
    Hence
    \[
    \begin{aligned}
        &\left\| \mathbb P_{\neq0}((P_Nu)^k) -\mathbb P_{\neq0}(u^k)\right\|_{C_t^0\dot{\A}_x^{-s}} \\
        &\qquad\le \sum_{\xi_1+\cdots+\xi_k\neq0}\left|\prod_{i=1}^k m_N(\xi_i)-1\right|\left\|\prod_{i=1}^k \hat u(\cdot,\xi_i)\right\|_{C_t^0}|\xi_1+\cdots+\xi_k|^{-s}.
    \end{aligned}
    \]
    For each fixed tuple $(\xi_1,\ldots,\xi_k)$, we have 
    \[
    \prod_{i=1}^k m_N(\xi_i)\to1.
    \]
    Since the multipliers $m_N$ are uniformly bounded, the summand is dominated by a constant multiple of the absolutely summable series above. Thus dominated convergence gives
    \[
    \mathbb P_{\neq0}((P_Nu)^k) \to \mathbb P_{\neq0}(u^k)
    \qquad \text{in }C_t^0\dot{\mathbb A}^{-s}_x\subset C_t^0\dot H^{-s}_x.
    \]
    Finally, $\partial_x$ annihilates the zero spatial mode, so
    \[
    \partial_x((P_Nu)^k) = \partial_x\mathbb P_{\neq0}((P_Nu)^k).
    \]
    Applying
    \[
    \partial_x:C_t^0\dot H^{-s}\to C_t^0\dot H_x^{-s-1}
    \]
    to the convergence above gives
    \[
    \partial_x((P_Nu)^k)\to \partial_x\mathbb P_{\neq0}(u^k)
    \qquad \text{in }C_t^0\dot H_x^{-s-1}.
    \]
    This is precisely the existence of the nonlinearity in Christ's cutoff sense, and the final assertion follows from \cite[Definition~2.3]{Christ1}.
\end{proof}

We finally record the fact that the absolute Fourier summability is stronger than the classical weak notion of solution when the nonlinearity lies in $C_t^0L_x^1$.

\begin{proposition}\label{prop:singular-implies-classical}
    Let $k \ge 2$, and suppose $s$ is such that
    \[
    H^s(\T) \hookrightarrow L^k(\T)
    \]
    for instance $s \ge \frac12-\frac1k$. Let 
    \[
    u \in C_t^0 H^s_x([0,1] \times \T)
    \]
    be a weak singular solution of ~\eqref{eq:gkdv} in the sense of Definition ~\ref{def:weak_para_soln}, with initial data $u_0$. Then $u$ is also a classical distributional weak solution. More precisely, for every $\phi \in C_c^\infty([0,1)\times \T)$,
    \[
    \int_0^1\int_\T \left( u\partial_t\phi+u\partial_{xxx}\phi+\sigma u^k\partial_x\phi\right) \,dx\,dt = -\int_\T u_0(x)\phi(0,x)\,dx,
    \]
    where $u^k$ has the usual meaning as a $C_t^0 L_x^1([0,1] \times \T)$ function.
\end{proposition}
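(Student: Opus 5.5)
The plan is to compare the weak singular formulation of Definition~\ref{def:weak_para_soln} with the classical one term by term. The linear terms are immediate. Since $u \in C_t^0 H^s_x \hookrightarrow C_t^0 L^2_x$ for $s \ge 0$, and $\phi(t,\cdot)$ is smooth, the pairings $\langle u,\partial_t\phi\rangle_{H^s,H^{-s}}$ and $\langle u,\partial_{xxx}\phi\rangle$ coincide with the integrals $\int_\T u\,\partial_t\phi\,dx$ and $\int_\T u\,\partial_{xxx}\phi\,dx$. (The homogeneous pairing drops only the zero mode, and $\partial_{xxx}\phi$ has no zero mode.) Likewise, $\langle u_0,\phi(0)\rangle = \int_\T u_0\phi(0)\,dx$. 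So everything reduces to proving, for each $t$,
\[
\left\langle \mathbb P_{\neq0}(u^k)(t),\partial_x\phi(t)\right\rangle_{\dot H^{s'},\dot H^{-s'}} = \int_\T u(t,x)^k\,\partial_x\phi(t,x)\,dx.
\]

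For this identity we argue on the Fourier side. First, the embedding $H^s \hookrightarrow L^k$ gives $u(t)\in L^k$, so $u(t)^k \in L^1(\T)$; continuity of $t \mapsto u(t)^k$ in $L^1$ follows from $|a^k-b^k|\lesssim |a-b|(|a|^{k-1}+|b|^{k-1})$ and H\"older. Second, $\partial_x\phi(t)$ is smooth with zero mean. Parseval then gives
\[
\int_\T u^k\,\partial_x\phi\,dx=\sum_{\eta\neq0}\widehat{u^k}(\eta)\,\overline{\widehat{\partial_x\phi}(\eta)},
\]
which converges absolutely because $|\widehat{u^k}(\eta)| \le \|u^k\|_{L^1}$ and $\widehat{\partial_x\phi}$ decays rapidly. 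The left side equals $\sum_{\eta\neq0}(\mathbb P_{\neq0}(u^k))^\wedge(\eta)\,\overline{\widehat{\partial_x\phi}(\eta)}$. It therefore suffices to show that for each $\eta\ne0$,
\[
\widehat{u^k}(t,\eta)=\sum_{\xi_1+\cdots+\xi_k=\eta}\hat u(t,\xi_1)\cdots\hat u(t,\xi_k).
\]
The right side converges absolutely by the hypothesis of Definition~\ref{def:paras}, since the weight $|\eta|^{s'}$ is a fixed positive constant for fixed $\eta$.

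This coefficient identity is the main obstacle: $\widehat{u^k}$ is only a Fourier coefficient of an $L^1$ function, and we have no $\ell^1$ control on $\hat u$ itself. We approximate with Fej\'er means. Let $F_N$ be the Fej\'er kernel, with nonnegative multiplier $m_N(\xi)=(1-|\xi|/N)_+ \to 1$, and set $u_N = F_N * u(t)$. Then $u_N \to u(t)$ in $L^k$, because Fej\'er means converge in $L^p$ for $p<\infty$. Hence $u_N^k \to u(t)^k$ in $L^1$, and so $\widehat{u_N^k}(\eta)\to\widehat{u^k}(\eta)$. The functions $u_N$ are trigonometric polynomials, so the finite convolution identity gives
\[
\widehat{u_N^k}(\eta)=\sum_{\xi_1+\cdots+\xi_k=\eta}\prod_i m_N(\xi_i)\hat u(\xi_i).
\]
Each summand is dominated by $\prod_i|\hat u(\xi_i)|$, which is summable over $\{\xi_1+\cdots+\xi_k=\eta\}$ by the absolute summability hypothesis. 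Dominated convergence (exactly as in Lemma~\ref{lem:absolute-implies-christ-cutoff}) sends the right side to the full convolution sum. Comparing the two limits proves the coefficient identity.

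Substituting these identities into the weak singular formulation and integrating in $t$ yields the classical weak formulation. The time integrability is justified by the continuity of $u$ in $C_t^0L^k_x$ and the smoothness of $\phi$.
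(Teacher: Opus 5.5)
Your proposal is correct and follows essentially the same route as the paper. You approximate $u$ by Fourier truncations and identify $\widehat{u^k}(t,\eta)$ for $\eta\neq0$ with the absolutely convergent convolution sum by passing to the limit on both sides: $L^k$ convergence gives $L^1$ convergence of the $k$-th powers, and dominated convergence handles the convolution. You then pair with the mean-zero $\partial_x\phi$. The differences are only technical: you use Fej\'er means pointwise in $t$, which are positive contractions on $L^k$, while the paper uses the smooth cutoffs $\mathbb{P}_{\le N}$ and obtains convergence in $C_t^0L_x^k$ via compactness of $u([0,1])$ in $L^k(\T)$.
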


\begin{proof}
    Since $H^s(\T) \hookrightarrow L^k(\T)$, we have
    \[
    u\in C_t^0L_x^k, \qquad u^k \in C_t^0L_x^1.
    \]
    Thus the usual product $u^k$ is a well-defined spacetime distribution.

    We first show that the product defined in Definition ~\ref{def:paras} agrees with the usual product after projecting away from the zero mode. Let $P_N =\mathbb P_{\le N}$ be the standard smooth Fourier cutoff. Since $P_N \to I$ strongly on $L^k(\T)$, the operators $P_N$ are uniformly bounded on $L^k(\T)$, and $u([0,1],\cdot)$ is compact in $L^k(\T)$, we have
    \[
    P_N u\to u \qquad \text{in } C_t^0L_x^k.
    \]
    We claim that, as a consequence, we also have
    \[
    (P_Nu)^k \to u^k \qquad \text{in }C_t^0L_x^1
    \]
    Indeed, by the pointwise inequality
    \[
    |a^k - b^k| \lesssim_k (|a|^{k-1} +|b|^{k-1})|a-b|
    \]
    and H\"older's inequality we get
    \[
    \begin{aligned}
        \|(P_Nu)^k-u^k\|_{C_t^0L_x^1} &\lesssim_k \left(\|P_Nu\|_{C_t^0L_x^k}^{k-1} + \|u\|_{C_t^0L_x^k}^{k-1} \right) \|P_Nu-u\|_{C_t^0L_x^k}.
    \end{aligned}
    \]
    The cutoffs $P_N$ are uniformly bounded on $L^k(\T)$\footnote{The statement of Lemma \ref{lem:proj} can be suitably modified to handle the case when the frequency cutoffs are not exactly dyadic integers.}, and $P_Nu\to u$ in $C_t^0L_x^k$. Hence
    the claim is established. For every nonzero frequency $\eta$, the Fourier coefficient of the smooth product $(P_Nu)^k$ is given by the finite convolution formula
    \[
    \left((P_Nu)^k\right)^{\wedge}(t,\eta) = \sum_{\xi_1+\cdots+\xi_k=\eta}m_N(\xi_1)\cdots m_N(\xi_k) \hat u(t,\xi_1)\cdots \hat u(t,\xi_k),
    \]
    where $m_N$ is the multiplier defining $P_N$. Since $\mathbb P_{\neq0}(u^k)$ is absolutely summable in the sense of Definition ~\ref{def:paras}, dominated convergence in this convolution gives
    \[
    \left((P_Nu)^k\right)^{\wedge}(\cdot,\eta) \to \left(\mathbb P_{\neq0}(u^k)\right)^{\wedge}(\cdot,\eta) \qquad \text{in }C_t^0
    \]
    On the other hand, the convergence $(P_Nu)^k \to u^k$ in $C_t^0L_x^1$ gives
    \[
    \left((P_Nu)^k\right)^{\wedge}(\cdot,\eta) \to \left(u^k\right)^{\wedge}(\cdot,\eta) \qquad \text{in } C_t^0.
    \]
    Hence, for every $\eta \neq 0$,
    \[
    \left(\mathbb P_{\neq 0}(u^k)\right)^{\wedge}(\cdot,\eta) = \left(u^k\right)^{\wedge}(\cdot,\eta).
    \]
    Therefore $\mathbb P_{\neq0}(u^k)$ agrees, as a distribution, with the projection onto nonzero spatial frequencies of the usual product $u^k$.

    Now take $\phi \in C_c^\infty([0,1)\times \T)$. Since $\partial_x \phi(t,\cdot)$ has zero spatial mean for every $t$, the zero mode of $u^k$ does not contribute. Therefore
    \[
    \int_0^1\left\langle \mathbb P_{\neq0}(u^k), \partial_x \phi \right \rangle\,dt = \int_0^1\int_\T u^k \partial_x \phi\,dx\,dt.
    \]
    The linear pairings in Definition ~\ref{def:weak_para_soln} are ordinary integrals because $u\in C_t^0H^s_x \subset C^0_tL^1_x$ on the torus. Also $u_0\in H^s(\T)\subset L^1(\T)$, so the initial pairing is the ordinary integral against $\phi(0,\cdot)$. Substituting these identities into Definition~\ref{def:weak_para_soln} gives exactly the displayed classical weak formulation.
\end{proof}

\noindent\textsc{Department of Mathematics, Purdue University, West Lafayette, IN, USA.}
\vspace{.03in}
\newline\noindent\textit{Email address}:
\href{mailto:ngismond@purdue.edu}{ngismond@purdue.edu}.
\newline\noindent\textit{Email address}:
\href{mailto:pathak30@purdue.edu}{pathak30@purdue.edu}.

\noindent\textsc{Department of Mathematics, Columbia University, New York, NY, USA.}
\vspace{.03in}
\newline\noindent\textit{Email address}:
\href{mailto:km4046@columbia.edu}{km4046@columbia.edu}.

\noindent\textsc{Simion Stoilow Institute of Mathematics of the Romanian Academy, Calea Grivitei Street, no. 21, 010702 Bucharest, Romania.}
\newline\noindent\textit{Email address}: \href{mailto:alexandru.radu@imar.ro}{alexandru.radu@imar.ro}.
\end{document}